\newif\ifxetexorluatex
\newtheorem{theorem}{Theorem}
\newtheorem*{theorem*}{Theorem}
\newtheorem{proposition}{Proposition}
\newtheorem*{proposition*}{Proposition}
\newtheorem{corollary}[proposition]{Corollary}
\newtheorem*{corollary*}{Corollary}
\newtheorem{lemma}[proposition]{Lemma}
\newtheorem*{lemma*}{Lemma}
\newtheorem{remark}[proposition]{Remark}
\newtheorem*{rmk*}{Remark}
\newtheorem*{claim*}{Claim}
\theoremstyle{definition}
\newtheorem{definition}[proposition]{Definition}
\newtheorem*{definition*}{Definition}
\newtheorem{example}[proposition]{Example}
\numberwithin{equation}{section}
\numberwithin{theorem}{section}
\numberwithin{proposition}{section}
\numberwithin{figure}{section}
\def\settozero{\setcounter{enumii}{-1}\renewcommand\theenumii{\arabic{enumii}}}
\newcommand{\E}{\mathbb E}
\newcommand{\R}{\mathbb{R}}
\newcommand{\Z}{\mathbb{Z}}
\newcommand{\M}{\mathbb{M}}
\newcommand{\diff}{\mathrm{d}}
\newcommand{\C}{\mathcal{C}}
\newcommand{\J}{\mathcal{J}}
\newcommand{\K}{\mathcal{K}}
\newcommand{\Q}{\mathcal{Q}}
\newcommand{\T}{\mathcal{T}}
\newcommand{\Ell}{\mathscr L}
\NewDocumentCommand{\Hom}{mmg}{\ensuremath{\text{Hom}_{\IfNoValueTF{#3}{}{#3}}(#1,#2)}}
\NewDocumentCommand{\Tor}{mmmg}{\ensuremath{\text{Tor}_{#3}^{\IfNoValueTF{#4}{}{#4}}(#1,#2)}}
\NewDocumentCommand{\Ext}{mmmg}{\ensuremath{\text{Ext}^{#3}_{\IfNoValueTF{#4}{}{#4}}(#1,#2)}}
\title{One-dimensional Wave Kinetic Theory}
\author[Katja D. Vassilev]{Katja D. Vassilev}
\address{Department of Mathematics, University of Michigan, Ann Arbor, MI 48109}
\email{kdv@umich.edu}
\pgfplotsset{compat=1.18}				
\begin{document}
\begin{abstract}
Although wave kinetic equations have been rigorously derived in dimension $d \geq 2$, both the physical and mathematical theory of wave turbulence in dimension $d = 1$ is less understood. Here, we look at the one-dimensional MMT (Majda, McLaughlin, and Tabak) model on a large interval of length $L$ with nonlinearity of size $\alpha$, restricting to the case where there are no derivatives in the nonlinearity. The dispersion relation here is $|k|^\sigma$ for $0 < \sigma \leq 2$ and $\sigma \neq 1$, and when $\sigma = 2$, the MMT model specializes to the cubic nonlinear Schr\"odinger (NLS) equation. In the range of $1 < \sigma \leq 2$, the proposed collision kernel in the kinetic equation is trivial, begging the question of what is the appropriate kinetic theory in that setting. In this paper we study the kinetic limit $L \to \infty$ and $\alpha \to 0$ under various scaling laws $\alpha \sim L^{-\gamma}$ and exhibit the wave kinetic equation up to timescales $T \sim L^{-\epsilon}\alpha^{-\frac{5}{4}}$ (or $T \sim L^{-\epsilon} T_{\mathrm{kin}}^{\frac{5}{8}}$). In the case of a trivial collision kernel, our result implies there can be no nontrivial dynamics of the second moment up to timescales $T_{\mathrm{kin}}$. 
\end{abstract}
\maketitle
\tableofcontents

\section{Introduction}
    Wave kinetic theory is the formal study of non-equilibrium statistical mechanics associated to nonlinear wave systems paralleling Boltzmann's kinetic theory for colliding particles. The theory attempts to extract the macroscopic dynamics arising from microscopic wave interactions. This is done with a \textit{wave kinetic equation} (WKE) describing the expected evolution of the energy spectrum as the size of the domain $L \to \infty$, while the strength of the nonlinearity $\alpha \to 0$. Deriving such an equation rigorously requires a \textit{scaling law}, describing how these two limits are taken.

Rigorous justification of these kinetic equations falls under the broad umbrella of  \textit{Hilbert's Sixth Problem}, which has recently begun to exhibit much progress in the wave setting in dimensions $d \geq 2$ up to (or past) the \textit{kinetic (or Van Hove) timescale}. This characteristic time reflects the scale separation between the nonlinear interactions and wave kinetic equation. Here,
\begin{equation}\label{eq-Tkin}
T_{\mathrm{kin}} \sim \frac{1}{\alpha^2}. 
\end{equation}
However, little is understood physically, and practically nothing has been justified rigorously in dimension $d = 1$. Here, we focus on 4-wave interacting systems in dimension 1 and study the MMT (Majda, McLaughlin, and Tabak) model with no derivatives in the nonlinearity, which has the cubic nonlinear Schr\"odinger (NLS) equation as a special case. More specifically, we consider 
\begin{equation}\label{DISP} 
\begin{cases}\left(i\partial_t - (2\pi)^{1-\sigma}|\nabla|^\sigma \right) u + \alpha |u|^2u = 0, \hspace{.8 cm} x \in \mathbb T_L = [0,L], \\ u(0,x) = u_{\mathrm{in}} (x),\end{cases}
\end{equation}
for $0 < \sigma \leq 2$ and $\sigma \neq 1$, where $\sigma = 2$ corresponds to the cubic NLS. Here, $\alpha$ represents the strength of the nonlinearity and the dispersion relation, up to factors of $2\pi$, is
\begin{equation} \label{eq-disp}
\omega(k) := |k|^\sigma.
\end{equation}

For the NLS ($\sigma = 2$), the relevant kinetic equation, which has been rigorously proved to hold in $d \geq 2$, is trivial in dimension one. The proposed kinetic equation for the MMT model is nontrivial only for $0 < \sigma < 1$. When $1 < \sigma \leq 2$, the question becomes what, if any, is the appropriate kinetic theory. Here, we provide a rigorous justification, under various scaling laws, of the (potentially trivial) wave kinetic equation up to $O\left(L^{-\epsilon}\alpha^{-\frac{5}{4}}\right)$ timescales. When the kinetic equation is trivial, our result implies that there can be no equation describing the evolution of the second moment with nontrivial dynamics on timescales $\leq T_{\mathrm{kin}}$.

\subsection{Statement of the main result}
Kinetic theory seeks to provide effective dynamics of $\E|\widehat{u}(t,k)|^2$ where we define the Fourier transform as 
\begin{equation}\label{ftransform}
\widehat{u}(t,k) = \frac{1}{L^{1/2}}\int_{\mathbb T_L}u(t,x) e^{-2\pi i k x}dx, \hspace{1cm} u(t,x) = \frac{1}{L^{1/2}}\sum_{k \in \Z_L} \widehat{u}(k) e^{2\pi i k x},
\end{equation}
and the expectation is taken over a random distribution of the initial data. Random initial data which allows for the kinetic description is called \textit{well-prepared}. Here, we consider random homogeneous initial data given by 
\begin{equation}\label{DAT} \tag{DAT}
u_{\mathrm{in}}(x) = \frac{1}{L^{1/2}}\sum_{k \in \Z_L} \widehat{u_{\mathrm{in}}}(k) e^{2\pi i k x}, \hspace{1cm} \widehat{u_{\mathrm{in}}}(k) = \sqrt{n_{\mathrm{in}}(k)}g_k(\omega), 
\end{equation}
where $\Z_L := (L^{-1}\Z)$,  $n_{\mathrm{in}}: \R \to [0, \infty)$ is a given Schwartz function (or a function of sufficient smoothness and decay), and $\{g_k(\omega)\}$ is a collection of i.i.d. random variables ($\omega$ used here is not to be confused with the dispersion relation). We assume that each $g_k$ is either a centered normalized complex Gaussian, or uniformly distributed on the unit circle of $\mathbb C$. 

With the assumptions (\ref{DAT}) on the initial data, the relevant wave kinetic equation is given by
\begin{equation}\label{WKE} \tag{WKE}
\begin{cases}
\partial_t n(t,\xi) = \K(n(t))(\xi) \\
n(0,\xi) = n_{\mathrm{in}}(\xi).
\end{cases}
\end{equation}
Here, the collision operator is 
\begin{equation}\label{KIN}\tag{KIN}
\K(\phi)(\xi) = \int_{\begin{subarray}{b}(\xi_1, \xi_2, \xi_3 ) \in \R^{3} \\ \xi_1 - \xi_2 + \xi_3 = \xi\end{subarray}} \phi \phi_1 \phi_2 \phi_3 \left(\frac{1}{\phi} - \frac{1}{\phi_1} + \frac{1}{\phi_2} - \frac{1}{\phi_3}\right) \delta_{\R}(\omega_1 - \omega_2 + \omega_3 - \omega)\diff\xi_1 \diff\xi_2 \diff\xi_3,
\end{equation}
where we denote, for $i = 1,2,3$,
\begin{align*}
&\phi = \phi(t, \xi), \hspace{.5cm} \phi_i = \phi(t, \xi_i), \\
&\omega = |\xi|^\sigma, \hspace{.5cm} \omega_i = |\xi_i|^\sigma.
\end{align*}

Note that we only integrate over $(\xi_1, \xi_2, \xi_3) \in \R^3$ satisfying \[{\xi_1 - \xi_2 + \xi_3 -\xi = 0} \text{ and } {\omega_1 - \omega_2 + \omega_3 - \omega = 0}.\] This \textit{resonant manifold} is highly dependent on the dispersion relation and the dimension, as explained in \cite{Majda1997}. When $0 < \sigma < 1$, the resonant manifold admits nontrivial resonances. However, when $1 < \sigma \leq 2$, the dispersion relation is convex and there are only trivial resonances, meaning we must have ${\{\xi_1, \xi_3\} = \{\xi_2, \xi\}}$. In this case the terms in the main part of the integrand of \eqref{KIN} involving $\phi$ cancel, so $\K = 0$. This is only the case in dimension 1 as in higher dimensions, the extra dimensions can help balance the convexity of the dispersion relation, allowing for nontrivial resonances. 

Of particular importance in a rigorous derivation of any kinetic equation is the scaling law relating how we take $L \to \infty$ and $\alpha \to 0$, given by $\alpha = L^{-\gamma}$. A turbulent regime requires \textit{weak nonlinearity}, so that $0 < \gamma \leq 1$. See \cite{WKE2023} for a heuristic explanation of why these are the relevant scaling laws. We may now state our theorems precisely: 
\begin{theorem}{(NLS)} \label{mainthm}
Fix $\gamma \in (0, 1)$, $\epsilon \ll 1$, and Schwartz function $n_{\mathrm{in}} \geq 0$. Consider the equation (\ref{DISP}) with $\sigma = 2$ and random initial data (\ref{DAT}), and assume $\alpha = L^{-\gamma}$ so that $T_{\mathrm{kin}} \sim L^{2\gamma}$. Fix $T = L^{\frac{5}{4}\gamma - \epsilon}$. Then, for $L^{0+} \leq t \leq T$ (where $0+$ represents a number strictly bigger than 0 sufficiently close to it), 
\begin{equation}\label{mainthm-eq}
\mathbb E\left|\widehat{u}(t,k)\right|^2 - n_{\mathrm{in}}(k)  = o_{\ell_k^\infty}\left(\frac{t}{T_{\mathrm{kin}}}\right).
\end{equation}
\end{theorem}

\begin{theorem}{(MMT)} \label{mmtthm}
Fix $\gamma \in (0,1)$, $\epsilon \ll 1$, Schwartz function $n_{\mathrm{in}} \geq 0$, and $A > 0$.  Consider the equation (\ref{DISP}) with $0 < \sigma< 2$ and $\sigma \neq 1$ and random initial data (\ref{DAT}), and assume $\alpha = L^{-\gamma}$ so that $T_{\mathrm{kin}} \sim L^{2\gamma}$. Fix $T$ according to 
\begin{equation*}
T = \begin{cases} L^{-\epsilon}\min(L^{\frac{1}{2 - \sigma}}, L^{\frac{5}{4}\gamma})& \mathrm{if \ } 0 < \sigma < 1, \\ L^{-\epsilon}\min(L,L^{\frac{5}{4}\gamma} )& \mathrm{if \ } 1 < \sigma < 2.
\end{cases}
\end{equation*}
Then, with probability $\geq 1 - L^{-A}$, (\ref{DISP}) has a smooth solution up to time $T$ and for ${L^{0+} \leq t \leq T}$,
\begin{equation}\label{mmtthm-eq}
\mathbb E\left|\widehat{u}({t,k})\right|^2 = n_{\mathrm{in}}(k)  + \frac{t}{T_{\mathrm{kin}}} \mathcal K(n_{\mathrm{in}})(k) + o_{\ell_k^\infty}\left(\frac{t}{T_{\mathrm{kin}}}\right),
\end{equation}
where the expectation is taken only when (\ref{DISP}) has a smooth solution. 
\end{theorem}

A few comments on the above theorems are in order: 

\begin{itemize}

\item The term $o_{\ell_k^\infty}\left(\frac{t}{T_{\mathrm{kin}}}\right)$ in Theorems \ref{mainthm} and \ref{mmtthm} is a quantity that is bounded in $\ell_k^\infty$ by $L^{-\theta}\frac{t}{T_{\mathrm{kin}}}$ for some $\theta > 0$. Additionally, we take $L^{0+} \leq t$ for technical reasons due to the way we state \eqref{mainthm-eq} and \eqref{mmtthm-eq}. For $0 \leq t \leq L^{0+}$, the proof provides a uniform bound of $L^{0+}T_{\mathrm{kin}}^{-1}$ on the quantity $\mathbb E\left|\widehat{u}(t,k)\right|^2 - n_{\mathrm{in}}(k)$. 
\item As mentioned above, the collision kernel (\ref{KIN}) is trivial when $1 < \sigma \leq 2$. Therefore, the theorems imply that there is no nontrivial behavior of $\E |\widehat{u}(t,k)|^2$ up to $T_{\mathrm{kin}}$ since that would contribute $O\left( \frac{t}{T_{\mathrm{kin}}} \right)$ to $\E |\widehat{u}(t,k)|^2$ on the intervals of the theorem. This collapse of the kinetic theory does not mean that there is no wave kinetic theory in this setting, but rather that any such theory would differ greatly from the existing one, and happen at later timescales. In particular, such a description cannot take the form of \eqref{WKE}.

\item While the NLS is known to be globally well-posed in $L^2$, we don't have a similar result for MMT, even locally. For this reason, the expectation in Theorem \ref{mmtthm} is taken over a set with overwhelming probability rather than in Theorem \ref{mainthm} where the expectation is taken without condition. 

\item The assumption that $n_{\mathrm{in}}$ is Schwartz is not necessary here in the sense that we only need control of finitely many derivatives of $n_{\mathrm{in}}$, say 40, and finite polynomial decay, depending on $O(\epsilon^{-1})$. See Remark \ref{rmk-how-large} for further clarification. 

\item Although Theorem \ref{mainthm} allows $T > L$, Theorem \ref{mmtthm} does not. This is primarily due to the limitations of the counting estimates when $\sigma \neq 2$, presented in Section \ref{subsec-counting}. For this reason, we are only able to reach times close to $\alpha^{-\frac{5}{4}} \sim T_{\mathrm{kin}}^{\frac{5}{8}}$ for certain scaling laws. In particular, if $0 < \sigma < 1$, we reach $L^{-\epsilon}\alpha^{-\frac{5}{4}}$ for scaling laws $0 < \gamma < \frac{4}{5}\frac{1}{2 - \sigma}$ and if $1 < \sigma < 2$, the corresponding scaling laws are $0 < \gamma < \frac{4}{5}$.
\end{itemize}
\subsection{Background literature} The physical study of \textit{wave kinetic theory} began with Peierls' work on anharmonic crystals \cite{Peierls} in 1929, yielding the phonon Boltzmann equation, the first wave kinetic equation, studied formally in \cite{PhononBoltzmann, Spohn08}. Shortly after this, Nordheim (and later Uehling and Uhlenbeck) formulated the quantum Boltzmann equation to describe quantum-interacting gases \cite{Nordheim, Uehling1933}. This framework was also adapted in plasma physics  \cite{Plasma1965, Plasma1967, Plasmadavidson1972} and water waves \cite{WaterWaves62, WaterWaves63, WaterWaves66, WaterWaves69} by the end of the 1960s. Currently, wave kinetic theory is central to many disciplines outside of physics and mathematics, including oceanography \cite{JanssenOceanography}, meteorology \cite{Meteorology}, and optics \cite{Optical1D, Optical1D17}. Zakharov discovered stationary solutions to wave kinetic equations that resemble Kolmogorov spectra in hydrodynamic turbulence \cite{Zakharov1965, TextbookCascade}, drawing a strong connection between wave kinetic theory and turbulence and lending an alternate name of \textit{wave turbulence theory}. These stationary power solutions exhibit a \textit{forward and backward cascade} of energy, which has been extensively studied, see \cite{TextbookCascade, TextbookWT} for a textbook treatment. 
 
\subsubsection{Rigorous derivations} Rigorous justification of kinetic equations has only recently begun to take shape. Although physicists have long employed Feynman diagram expansions to derive kinetic equations, proving mathematical convergence of these expansions first occurred in 1977 in the linear setting, when Spohn derived a transport equation for electrons moving through random impurities for short times \cite{SpohnFeynman}. This was extended in the early 2000s by Erd\"os-Yau \cite{ErdosYau} and Erd\"os-Salmhofer-Yau \cite{ErdosSalmhoferYau} to a global in time result. In the nonlinear setting, the mathematical study of wave turbulence first focused on demonstrating the energy cascade predicted by the kinetic theory via the growth of Sobolev norms, see for instance \cite{CKSTT, SobolevGrowth, SobolevGrowth2D}. In 2011, Lukkarinen-Spohn used a Feynman diagram expansion to study a similar problem to the derivation of the kinetic equation for the discrete NLS on a lattice \cite{NLSLattice}.
 
 This prompted many works to explore the problem of rigorously deriving kinetic equations \cite{2DNLS, BGHSCR, BGHSdyn, BGHSonset, CG1, CG2}, culminating in a rigorous derivation by Deng-Hani of the WKE for the cubic NLS in dimension $d \geq 3$, first for short multiples of kinetic time \cite{2019, WKE, WKE2023}, and then for arbitrarily long times \cite{WKElong}. There has also been progress on related equations including the Wick NLS \cite{WickNLS} and the NLS with additive stochastic forcing \cite{StochasticNLS}. While these equations all involve 4-wave interactions, work has also been done in the 3-wave setting with the Zakharov-Kuznetsov (ZK) equation with stochastic forcing in the homogeneous \cite{ZK} and inhomogeneous \cite{ZKinhomo} case, and with diffusion \cite{MaZK}, as well as the NLS with quadratic nonlinearity \cite{QuadraticInhomo}. 

\subsubsection{1D Results} The rigorous derivations of WKEs described above focus almost exclusively on dimensions $d \geq 2$ or higher, as major obstructions to bounding the Feynman expansion emerge in lower dimensions. On the other hand, numerical simulation of turbulence is less computationally intensive in lower dimensions, with the one-dimensional MMT model first proposed with the aim of performing empirical numerical studies on wave turbulence \cite{1DWT, Majda1997}. However, the MMT can have a trivial collision kernel, in which case understanding the proper kinetic picture is wide open. Most notably the cubic NLS, an integrable equation, has spawned many numerical results investigating what is now termed \textit{integrable turbulence} \cite{ZakharovIntTurb, IntegrableTurbulence, AZ15, AZ16}. Recent applied works on the NLS have focused on quasi-kinetic equations for the evolution of \textit{kurtosis}, a quantity related to the fourth moment, for short times \cite{4Moment, ExtremeEvents4, HeavyTail4, Janssen, Kurtosis}.

Although there has not been much progress on the rigorous derivation of kinetic theory in one dimension, related progress has begun. For instance, \cite{2DGravityWaves} proves energy estimates that are relevant to wave kinetic theory for two-dimensional gravity waves, which reduce to a one-dimensional system in the irrotational case. For the cubic NLS, \cite{dimensionone} investigates the longtime dynamics using normal forms for scaling laws $\gamma > 1$, which lie outside of the kinetic regime. The work here constitutes the first rigorous derivation of kinetic equations in one-dimension with weak nonlinearity.

Since the completion of this paper, several works have been announced making progress on the question of one dimensional turbulence, all addressing a similar subcritical timescale to the one used here. In particular, Deng-Ionescu-Pusateri constructed bounds on corresponding solutions \cite{1DWWWKE} using \cite{2DGravityWaves} and Wu used a similar framework to this work to derive the WKE for the dimension one $\beta$-FPUT model \cite{FPUWKE}. It would be desirable to extend these results to $T_{\mathrm{kin}}$, but this would require bounding Feynman diagram expansions that diverge in dimension one, but not higher dimension. This remains to be properly understood. 

\subsubsection{Well-posedness} While this paper deals solely with the derivation of kinetic equations, we should address the highly related question of well-posedness of such equations. Indeed, as the derivations rely on matching a Feynman diagram expansion to iterates of the WKE, some control on solutions is implicitly needed, although plays no direct role in the proof. There have been many recent works establishing local (and sometimes, global) well-posedness for 4-wave homogeneous \cite{EscobedoVelazquez, RadialWP, WP4, KineticMMT, FPUTWP, AmpatzoglouLegerIll}, 4-wave inhomogeneous  \cite{AmpatzoglouMildGlobal, AmpatzoglouLegerStrong, AMPTWP}, and 6-wave inhomogeneous \cite{6waveWP} WKEs. Additionally, \cite{AmpatzoglouLegerIll} establishes ill-posedness for a class of 4-wave homogeneous WKEs. Several works also investigated the related question of stability \cite{CascadeNLS, Stability4wave, FPUTWP} and instability \cite{Instability4wave} of steady states to WKEs.

For the 1D MMT, \cite{KineticMMT} established local well-posedness when the dispersion parameter $\sigma = \frac{1}{2}$, although this choice is mostly for algebraic simplification reasons. Their result also include a more general nonlinearity for MMT containing derivatives, namely ${\alpha |\nabla|^\beta\left(\left||\nabla|^\beta u\right|^2|\nabla|^\beta u \right)}$, where they establish local well-posedness only for values of $\beta \leq 0$. In dimension $d = 3$, \cite{AmpatzoglouLegerIll} considers the same system, establishing $\beta = \frac{1}{4}$ as the threshold for well-posedness. In the case of well-posedness for $\beta > 0$, it is plausible that one could derive the WKE, however there is an additional difficulty as iteration introduces losses for high frequencies. The only related result in this direction are \cite{2DGravityWaves, 1DWWWKE}, where the authors established longtime stability of the water wave equation. To deal with the losses caused by derivatives in the nonlinearity, they proved energy estimates which allowed them to control high frequencies.

\subsection{High-level proof overview}
As is the case in most derivations of kinetic equations, we rely on Feynman diagram expansions in the form of Duhamel iterates to write 
\begin{equation*}
u = u^{(0)} + u^{(1)} + u^{(2)} + \ldots u^{(N)} + R_N,
\end{equation*}
for sufficiently large finite $N$. Here, $u^{(j)}$ represents the $j$-th iterate and $R_N$ is the remainder. Expressing both of these properly requires a significant amount of setup provided in Section \ref{sec-prep}, so a detailed overview of the proof will be provided in Section \ref{sec-overview}. However, we lay out the general strategy to analyze the iterates and the remainder. As we do not reach $T_{\mathrm{kin}}$, the number of iterates $N$ that we look at will be finite $O(\epsilon^{-1})$ as in \cite{2019} rather than the $O(\log L)$ necessary to reach $T_{\mathrm{kin}}$ in \cite{WKE, WKE2023}. The overarching technique is quite similar to that in \cite{WKE, WKE2023}, although here we explicitly handle more general dispersion relations and utilize a novel method to bound the large component of the iterates.  The main components of the analysis are: 

\begin{itemize}
\item The \textbf{Feynman diagram} expansion here is given by terms of \textit{ternary trees}, so that each iterate $u^{(n)}$ is the sum of expressions related to all ternary trees of order $n$ (the order of a tree is the number of branching nodes). These ternary trees keep track of the nonlinear interactions at each iteration and are \textit{decorated} with wave numbers, whose number diverges as $L \to \infty$. As we need to consider the correlations $\E \left(u^{(k)}\overline{u^{(l)}}\right)$, we obtain \textit{couples}, two ternary trees along with a pairing of their leaves. See Figure \ref{fig-eg-couple} for two ternary trees which are paired to form a couple. Section \ref{sec-prep} is devoted to setting up the expansion precisely. 

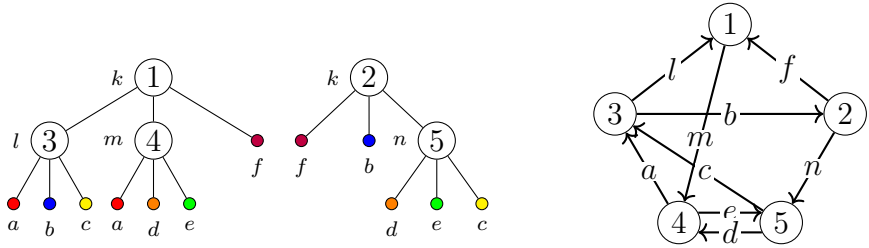
\begin{figure}
\begin{tikzpicture}[scale = .6, level distance=1.4cm,
  level 1/.style={sibling distance=2.3cm},
  level 2/.style={sibling distance=0.8cm}]]
\tikzstyle{hollow node}=[circle,draw,inner sep=1.6]
\tikzstyle{solid node}=[circle,draw,inner sep=1.6,fill=black]
\tikzset{
red node/.style = {circle,draw=black,fill=red,inner sep=1.6},
blue node/.style= {circle,draw = black, fill= blue,inner sep=1.6}, 
purple node/.style= {circle,draw = black, fill= purple,inner sep=1.6}, 
orange node/.style= {circle,draw = black, fill= orange,inner sep=1.6},
yellow node/.style= {circle,draw = black, fill= yellow,inner sep=1.6},
green node/.style = {circle,draw=black,fill=green,inner sep=1.6}}
\node[hollow node, label = left: {\tiny $k$}] at (14,.4){1}
    child{node[hollow node, label = left: {\tiny $l$}]{3}
        child{node[red node, label = below: {\tiny $a$}]{}}
        child{node[blue node, label = below: {\tiny $b$}]{}}
        child{node[yellow node, label = below:{\tiny $c$}]{}}
    }
    child{node[hollow node, label = left:{\tiny $m$}]{4}
        child{node[red node, label = below: {\tiny $a$}]{}}
        child{node[orange node, label = below: {\tiny $d$}]{}}
        child{node[green node, label = below:{\tiny $e$}]{}}
    }
    child{node[purple node, label = below: {\tiny $f$}]{}}
;
\end{tikzpicture}
~
\begin{tikzpicture}[scale = .6, level distance=1.4cm,
  level 1/.style={sibling distance=1.5cm},
  level 2/.style={sibling distance=1.0cm}]]
\tikzstyle{hollow node}=[circle,draw,inner sep=1.6]
\tikzstyle{solid node}=[circle,draw,inner sep=1.6,fill=black]
\tikzset{
red node/.style = {circle,draw=black,fill=red,inner sep=1.6},
blue node/.style= {circle,draw = black, fill= blue,inner sep=1.6}, 
purple node/.style= {circle,draw = black, fill= purple,inner sep=1.6}, 
orange node/.style= {circle,draw = black, fill= orange,inner sep=1.6},
yellow node/.style= {circle,draw = black, fill= yellow,inner sep=1.6},
green node/.style = {circle,draw=black,fill=green,inner sep=1.6}}
\node[hollow node, label = left: {\tiny $k$}] at (14,.4){2}
    child{node[purple node, label = below: {\tiny $f$}]{}}
    child{node[blue node, label = below:{\tiny $b$}]{}}
    child{node[hollow node, label = left: {\tiny $n$}]{5}
        child{node[orange node, label = below: {\tiny $d$}]{}}
        child{node[green node, label = below: {\tiny $e$}]{}}
        child{node[yellow node, label = below:{\tiny $c$}]{}}
    }
;
\end{tikzpicture}
~\hspace{1cm}
\begin{tikzpicture}[scale = .85]
    \tikzstyle{open node} = [circle, draw = black, inner sep = 2.3]
    \tikzstyle{empty node} = [inner sep=1,outer sep=0]
    \node[open node] (3) at (2.2, .1) {3};
    \node[open node] (2) at (5.8, .1) {2};
    \node[open node] (1) at (4,1.5) {1};
    \node[open node] (4) at (3.2, -1.6) {4};
    \node[open node] (5) at (4.8, -1.6) {5};

    \draw[->, thick] (3) -- node[empty node, midway, fill = white, draw = none]{$b$} (2);
    \draw[->, thick] (2) -- node[empty node, midway, fill = white, draw = none]{$f$} (1);
    \draw[->, thick] (3) -- node[empty node, midway, fill = white, draw = none]{$l$} (1);
    \draw[->, thick] (1) -- node[empty node, pos = .6, fill = white, draw = none]{$m$} (4);
    \draw[->, thick] (4.27) --node[empty node, midway, fill = white,draw = none]{$e$} (5.153);
    \draw[->, thick] (5.207) -- node[empty node, midway, fill = white, draw = none]{$d$} (4.333);
    \draw[->, thick] (5) -- node[empty node, pos = .45, fill = white, draw = none]{$c$} (3);
    \draw[->, thick] (4) -- node[empty node, pos = .45, fill = white, draw = none]{$a$} (3);
    \draw[->, thick] (2) -- node[empty node, midway, fill = white, draw = none]{$n$} (5);
\end{tikzpicture}
\caption{An example of two ternary trees which are paired (using colors and letters) to form a couple, as well as the corresponding molecule. }
\label{fig-eg-couple}
\end{figure}

\item The main step of the proof is to reduce it to a \textbf{counting problem} on couples (see Section \ref{sec-red}), where we bound the number of possible decorations with wave numbers. To capture the key parts of the counting problem, we turn the couple into a \textit{molecule}, a type of directed graph seen in Figure \ref{fig-eg-couple}, which is roughly performed by attaching the two trees of the couple by their paired leaves. The goal is to count small groups of wave numbers at a time using counting estimates. Unfortunately, counting estimates are much worse in ${d = 1}$. In \cite{WKE, WKE2023}, the higher dimensions allow the authors to combine several smaller counting estimates which are undesirable into a single larger and stronger estimate. We show that this is not possible in $d = 1$ and analytically prove the basic counting estimates encompassing all dispersion relations we are considering (see Section \ref{sec-counting}). These estimates are roughly split between those that are good, in the sense that they could allow us to reach $T_{\mathrm{kin}}$, or bad otherwise. 

\item We also rely on a \textbf{cancellation argument} between couples to minimize the number of bad counting estimates that we must do. Similar to \cite{WKE}, we identify \textit{irregular chains} as a structure within couples that can lead to a counting problem with an arbitrarily large number of bad counting estimates. However, we exhibit cancellations between irregular chains in similar couples, allowing us to remove the irregular chains from these couples before we reduce them to the counting problem. See Section \ref{sec-splice}. 

\item Once we reduce the result to a counting problem on a couple, or its corresponding molecule, we lay out an \textbf{algorithm} in Section \ref{sec-algorithm} using our counting estimates. The algorithm is suited to subcritical problems ($T \ll T_{\mathrm{kin}}$) in the sense that it minimizes (but cannot counteract) the number of bad counting estimates. Note that this is the case even considering our cancellation argument, which simply removes the very worst structures.

\item To bound the number of bad counting estimates, we perform an \textbf{operation map} in Section \ref{sec-opbound} by developing \textit{operation trees}, binary trees expressing the relationships between steps of the algorithm. These trees allow us to partially order each step of the algorithm and map the steps using bad counting estimates to steps using good ones in a well-defined manner. 

\item The \textbf{remainder} term $R_N$ is dealt with in Section \ref{sec-opL} and \ref{sec-thm}. We reduce bounding the remainder to inverting a linear operator $1 - \Ell$, allowing us to bound $R_N$ using contraction mapping. Powers of $\Ell^n$ can be written in terms of expressions related to a generalized version of ternary trees, which is what we end up bounding. The reason that we must consider the remainder separately is due to the divergence of the number of couples we would need to consider in higher iterates.
\end{itemize}

\section{Preparations}
\label{sec-prep}
\subsection{Preliminary reductions} For a solution $u$ to (\ref{DISP}), let $M = \fint |u|^2$ be the conserved mass (where $\fint$ takes the average on $\mathbb T_L$), and define $v:= e^{-2i\alpha Mt}\cdot u$. Then $v$ satisfies the Wick ordered equation 
\begin{equation}
\left(i\partial_t - \frac{1}{2\pi}|\nabla|^\sigma \right) v + \alpha\left(|v|^2 v - 2\fint |v|^2 \cdot v\right) = 0.
\end{equation}
Switching to Fourier space, rescaling in time, and conjugating by the linear flow, we define
\begin{equation}
a_k(t) = e^{-2\pi i \omega(k) Tt}\widehat{v}(Tt,k). 
\end{equation}
Then setting $a(t) = (a_k(t))$, $a_k$ satisfies 
\begin{equation}\label{eq-a}
\begin{cases}
\partial_t a_k = \mathcal{C}_+({ a}, { a}, {a})_k(t), \\
a_k(0) = \sqrt{n_{\mathrm{in}}(k)}g_k(\omega).
\end{cases}
\end{equation}
The nonlinearity $\C_{\zeta}$, for $\zeta \in \{\pm\}$ is
\begin{equation} \label{eq-C}
\C_{\zeta}(a, b, c)(t) := \left(\frac{\alpha T}{L} \right) (i \zeta) \sum\limits_{k_1 - k_2 + k_3 = k} \epsilon_{k_1k_2k_3}e^{\zeta 2\pi i Tt\Omega(k_1, k_2, k_3, k)} a_{k_1}(t) \overline{b_{k_2}(t)} c_{k_3}(t), 
\end{equation}
where 
\begin{equation} \label{eq-eps}
\epsilon_{k_1k_2k_3} := \begin{cases}
+1, & \text{if } k_2 \notin \{k_1,k_3\}; \\
-1 & \text{if } k_1 = k_2 = k_3; \\
0 & \text{otherwise},
\end{cases}
\end{equation}
and the resonance factor 
\begin{equation}\label{eq-omega}
\Omega = \Omega(k_1, k_2, k_3, k) := \omega(k_1) - \omega(k_2) + \omega(k_3) - \omega(k).
\end{equation}

The rest of the paper is focused on the system (\ref{eq-a}) for $a$, with the relevant terms defined in (\ref{eq-C})-(\ref{eq-omega}), in the time interval $t \in [0,1]$. 
\subsection{Parameters, notations, and norms} \label{sec-norms}
Throughout, let $C$ denote a large constant depending only on $(n_{\mathrm{in}}, \epsilon, \sigma)$, which may vary from line to line. Let $\theta $ denote any sufficiently small constant, depending on $(\epsilon, \sigma)$, and $\delta$ a fixed small constant depending only on $(\epsilon, \sigma)$. Define $N:= \frac{10^6}{\epsilon}$, the level up to which we will expand. 

For $t \in [0,1]$ and any function $F = F(t)$ defined on $[0,1]$, denote the Duhamel operator by
\begin{equation}\label{eq-duhamel}
\mathcal I F(t) = \int_0^t F(s) \diff s.
\end{equation}
Define the time Fourier transform (the use of $\widehat{\cdot}$ depends on the context) by
\begin{equation*}
\widehat{u}(\tau) = \int_\R u(t) e^{-2\pi i \tau t} \diff t, \hspace{1cm} u(t) = \int_\R \widehat{u}(\tau) e^{2\pi i \tau t} \diff \tau. 
\end{equation*}
For a function $a(t) = (a_k(t))_{k \in \Z_L}$, define the $Z$ norm to be
\begin{equation}
||a||_Z^2 = \sup_{0 \leq t \leq 1} L^{-1} \sum_{k \in \Z_L}\langle k \rangle^{10} |a_k(t)|^2.
\end{equation}
\subsection{Ansatz}
Note that by (\ref{eq-a}) and (\ref{eq-duhamel}), $a_k$ satisfies 
\begin{equation*}
a_k(t) = a_k(0) + \mathcal I \mathcal C_+({a}, {a}, {a})(t).
\end{equation*}
Define $(\mathcal J_n)_k(t)$ recursively as follows: 
\begin{align}
\left( \J_0\right)_k(t) &= a_k(0), \\
\left( \J_n\right)_k(t) &= \sum_{n_1 + n_2+ n_3 = n-1} \mathcal I \mathcal C_+(\mathcal J_{n_1}, \J_{n_2}, \J_{n_3})(t). 
\end{align}
We take the following ansatz for $a_k(t)$:
\begin{align*}
a_k(t) &= \sum_{n = 0}^N (\mathcal J_n)_k(t) + b_k(t),
\end{align*}
where $b$ is a remainder term and $N$ is defined in Section \ref{sec-norms}. Then, $b$ satisfies the equation
\begin{equation}\label{eq-b}
b = \mathscr R + \Ell b + \mathscr Q(b,b) + \mathscr C(b,b,b).
\end{equation}
The relevant terms are defined as
\begin{align}
\mathscr R &= \sum_{\substack{n_1 + n_2 + n_3 \geq N \\ 0 \leq n_1, n_2, n_3 < N}} \mathcal I \mathcal C_{+}(\mathcal J_{n_1}, \mathcal J_{n_2}, \mathcal J_{n_3}) \label{eq-r}, \\ 
\Ell b &= \sum_{0 \leq n_1, n_2 < N}^{\mathrm{cyc}} \mathcal I \mathcal C_{+}(\mathcal J_{n_1}, \mathcal J_{n_2}, b), \label{eq-L}\\
\mathscr Q(b,b) &= \sum_{0 \leq n < N}^{\mathrm{cyc}} \mathcal I \mathcal C_{+}(\J_{n}, b, b), \\
\mathscr C(b,b,b) &= \mathcal I \mathcal C_{+}(b, b, b),
\end{align}
where $\Sigma^\mathrm{cyc}$ indicates cyclic permutations of the entries of $\mathcal C_+$. Note that $\Ell, \mathscr Q, $ and $\mathscr C$ are multilinear operators. Therefore, (\ref{eq-b}) is equivalent to
\begin{equation} \label{eq-binv}
b = (1 - \Ell)^{-1}(\mathscr R + \mathscr Q(b,b) + \mathscr C(b,b,b)),
\end{equation}
provided that $1 - \mathscr L$ is invertible in a suitable space.
\subsection{Trees and couples}

As in  \cite{2019, WKE, WKE2023}, we break the terms $\J_n$ in our ansatz of $a_k$ down even further into ternary trees $\T$:
\begin{definition}{(Trees)}
A \textit{ternary tree} $\T$ is a rooted tree where each branching node has precisely three children. We say the tree is \textit{trivial} if it has no branching nodes, and denote it by $\boldsymbol \cdot$. Denote the set of branching nodes by $\mathcal N$, the set of leaves by $\mathcal L$, and the root by $\mathfrak r$. The \textit{order} of a tree, is denoted $n(\T) = |\mathcal N|$.  If $n(\T) = n$, $|\mathcal L| = 2n+1$. A tree may have a sign $\zeta \in \{\pm\}$, which sometimes may be added as a superscript. Then, each node (branching or leaf) also has a sign such that for a branching node $\mathfrak n$ with sign $\zeta_{\mathfrak n}$, its three children, from left to right, have sign $(+\zeta_{\mathfrak n}, - \zeta_{\mathfrak n}, +\zeta_{\mathfrak n})$. Define $\zeta(\T) = \prod_{\mathfrak n \in \mathcal N} (i\zeta_\mathfrak n)$. 
\end{definition}

\begin{definition}{(Decorations)} A decoration $\mathscr D$ of a tree $\T$ is a set of vectors $(k_{\mathfrak n})_{\mathfrak n \in \T}$ such that $k_{\mathfrak n} \in \Z_L$. Further, if $\mathfrak n$ is a branching node, 
\begin{equation*}
\zeta_{\mathfrak n} k_{\mathfrak n} = \zeta_{\mathfrak n_1} k_{\mathfrak n_1} + \zeta_{\mathfrak n_2} k_{\mathfrak n_2} +\zeta_{\mathfrak n_3} k_{\mathfrak n_3},
\end{equation*}
where $\mathfrak n_1, \mathfrak n_2, \mathfrak n_3$ are the three children of $\mathfrak n$ labelled from left to right. We say $\mathscr D$ is a $k$-decoration if $k_{\mathfrak r} = k$. Given a decoration $\mathscr D$, for each $\mathfrak n \in \mathcal N$ we define the resonance factors $\Omega_{\mathfrak n}$ by 
\begin{equation}
\Omega_{\mathfrak n} := \omega(k_{\mathfrak n_1}) - \omega(k_{\mathfrak n_2}) + \omega(k_{\mathfrak n_3}) - \omega(k_{\mathfrak n}).
\end{equation}
We also define 
\begin{equation}
\epsilon_{\mathscr D} := \prod_{\mathfrak n \in \mathcal N} \epsilon_{k_{\mathfrak n_1}k_{\mathfrak n_2}k_{\mathfrak n_3}}. 
\end{equation}
\end{definition}

\begin{definition}
For a tree $\T$ of order $n$, define 
\begin{equation} \label{eq-J}
(\J_\T)_k(t) := \left(\frac{\alpha T}{L}\right)^n \zeta(\T) \sum_{\mathscr D} \epsilon_{\mathscr D} \int_\mathcal{D} \prod_{\mathfrak n \in \mathcal N}e^{\zeta_{\mathfrak n} 2 \pi i T{t_\mathfrak n}\Omega_{\mathfrak n}} \diff t_{\mathfrak n} \prod_{\mathfrak l \in \mathcal L} \sqrt{n_{\mathrm{in}}(k_{\mathfrak l})}g_{k_{\mathfrak l}}^{\zeta_\mathfrak l}(\omega),
\end{equation}
where the sum is taken over all $k$-decorations $\mathscr D$ of $\T$, and the domain 
\begin{equation} \label{eq-D}
\mathcal D = \{t[\mathcal N] : 0 < t_{\mathfrak n'} < t_{\mathfrak n} < t \text{ whenever } \mathfrak n' \text{ is a child node of } \mathfrak n\}.
\end{equation}
\end{definition}

With this definition, for each $n$
\begin{equation}
(\J_n)_k(t) = \sum_{n(\mathcal{T^+}) = n}(\J_{\T^+})_k(t).
\end{equation}

Since we aim to estimate $\E|\widehat{u}(Tt,k)|^2 = \E |a_k(t)|^2$, it will therefore be useful to look at two trees $\T^+$ and $\T^-$ and estimate $\E \left[(\mathcal{J_{T^+}})_k(t)\overline{(\mathcal{J_{T^-}})_k(s)}\right] (0 \leq s,t \leq 1)$. For this, we recall the notion of couples introduced in \cite{WKE, WKE2023}: 

\begin{definition}{(Couples)} \label{def-couples}
A couple $\Q$ is a pair $\{\T^+, \T^-\}$ of two trees with opposite signs, together with a partition $\mathscr P$ of the set $\mathcal L^+ \cup \mathcal L^-$ into $(n+1)$ pairwise disjoint two-element subsets where $n = n(\T^+) + n(\T^-)$ is the \textit{order} of the couple. For the partition $\mathscr P$, we further impose that for $\{\mathfrak l, \mathfrak l'\} \in \mathscr P$, $\zeta_{\mathfrak l} = -\zeta_{\mathfrak l'}$. For a  couple $\Q = \{\T^+, \T^-, \mathscr P\}$, we denote the set of branching nodes by $\mathcal N = \mathcal N^+ \cup \mathcal N^-$ and the leaves by $\mathcal L = \mathcal L^+ \cup \mathcal L^-$. Define $\zeta(\Q) = \prod_{\mathfrak n \in \mathcal N}(i \zeta_{\mathfrak n})$. The trivial couple is the (only) order 0 couple, consisting of two trivial trees whose roots are paired. See Figure \ref{fig-eg-couple} for an example of a couple. 

A \textit{decoration} $\mathscr E$ of a couple is a decoration of each tree by $\mathscr D^+, \mathscr D^-$, along with the further restriction that $k_{\mathfrak l} = k_{\mathfrak l'}$ if $\{\mathfrak l, \mathfrak l'\} \in \mathscr P$. We define $\epsilon_{\mathscr E} = \epsilon_{\mathscr D^+}\epsilon_{\mathscr D^-}$. We say $\mathscr E$ is a $k$-decoration if $k_{\mathfrak r^+} = k_{\mathfrak r^-} = k$.
\end{definition}

\begin{definition}
For a couple $\Q$, define 
\begin{equation}\label{eq-KQ}
(\K_\Q)(t,s,k)  := \left( \frac{\alpha T}{L}\right)^n \zeta(\Q) \sum_{\mathscr E} \epsilon_{\mathscr E}\int_\mathcal{E} \prod_{\mathfrak n \in \mathcal N}e^{\zeta_{\mathfrak n} 2 \pi i Tt_{\mathfrak n}\Omega_{\mathfrak n}} \diff t_{\mathfrak n} \prod\limits^{+}_{\mathfrak l \in \mathcal L} n_{\mathrm{in}}(k_{\mathfrak l}),
\end{equation}
where the sum is taken over all $k$-decorations $\mathscr E$ of $\Q$, and the product $\prod^+_{\mathfrak l \in \mathcal L}$ is taken over leaves with $+$ signs, and the domain 
\begin{align} \label{eq-E}
\mathcal E &= \{t[\mathcal N] : 0 < t_{\mathfrak n'} < t_{\mathfrak n} \text{ whenever } \mathfrak n' \text{ is a child node of } \mathfrak n; \\
&\hspace{3.5cm}t_{\mathfrak n} < t \text{ for } \mathfrak n \in \mathcal N^+ \text{ and } t_{\mathfrak n} < s \text{ for } \mathfrak n \in \mathcal N^-\}. \nonumber
\end{align}
\end{definition}

Note that by Isserlis' Theorem, see Lemma A.2 of \cite{WKE},
\begin{equation}
\E \left[(\J_{\T^+})_k(t)\overline{(\J_{\T^-})_k(s)}\right] = \sum_{\mathcal Q} (\K_{\Q})(t,s,k), 
\end{equation}
where the summation is taken over all couples $\Q = \{\T^+, T^-\}$ with any partition $\mathscr P$. 
\subsection{The main estimates} The following are the main estimates of this paper. Their proofs will occupy up through Section \ref{sec-opL}, with the proof of Theorems \ref{mainthm} and \ref{mmtthm} in Section \ref{sec-thm}. Recall that $N$ large depending on $\epsilon$ is fixed in Section \ref{sec-norms}. 

\begin{proposition} {(Bound on Couples)} \label{prop-couples}
For each $1 \leq n \leq N^3$ and $0 \leq t \leq  1$, 
\begin{equation}\label{eq-couples}
\left|\sum_{\Q} (\K_\Q)(t,t,k)\right| \lesssim \langle k \rangle^{-20} T^{-\frac{3}{5}}(L^\theta \alpha T^{\frac{4}{5}})^n, 
\end{equation}
where the summation is taken over all couples $\Q = \{\T^+, \T^{-}\}$ such that $n(\T^+) + n(\T^-) = n$. 
\end{proposition}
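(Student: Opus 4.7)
The strategy is to reduce the bound on $\sum_\Q (\K_\Q)(t,t,k)$ to a combinatorial counting problem on molecules, to carry out that counting via the algorithm of Section \ref{sec-algorithm}, and to control the unavoidable loss from bad counting estimates through the operation-tree argument of Section \ref{sec-opbound}. I would proceed in four stages.

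First, I would fix a couple $\Q$ of order $n$ and perform the time integrations over the simplex $\mathcal E$ in (\ref{eq-KQ}). Passing to the time Fourier side (or integrating by parts in each $t_{\mathfrak n}$), each branching node produces a factor controlled by $\langle T \Omega_\mathfrak{n}\rangle^{-1}$ together with an $L^1$-in-$\tau$ multiplier of acceptable size. Taking absolute values yields, schematically,
\[
|(\K_\Q)(t,t,k)| \lesssim \left(\frac{\alpha T}{L}\right)^n \sum_{\mathscr E} |\epsilon_{\mathscr E}|\, \mu\bigl(T, (\Omega_\mathfrak{n})_{\mathfrak n \in \mathcal N}\bigr) \prod^+_{\mathfrak l \in \mathcal L} n_{\mathrm{in}}(k_\mathfrak{l}),
\]
where $\mu$ essentially localizes each $\Omega_\mathfrak{n}$ to the set $\{|T\Omega_\mathfrak{n}| \lesssim 1\}$. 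Bounding $\K_\Q$ is thereby reduced to counting $k$-decorations subject to the momentum-conservation constraints at each branching node and these $n$ approximate-resonance constraints, weighted by $n_{\mathrm{in}}$.

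Second, before applying counting estimates, I would remove irregular chains. These are the structures, identified in Section \ref{sec-splice}, whose direct counting would force arbitrarily many bad estimates. The splicing cancellation shows that if an irregular chain is present in $\Q$, there is a partner couple $\Q'$ whose sum $\K_\Q + \K_{\Q'}$ exhibits a cancellation that effectively removes the chain, so the proof reduces to bounding spliced couples only. Each such couple is then encoded as its molecule (as in Figure \ref{fig-eg-couple}), so that the counting problem becomes a graph-theoretic problem on a graph with $n$ vertices.

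Third, I would run the algorithm of Section \ref{sec-algorithm} on the molecule, which processes vertices (or small clusters of vertices) one at a time, each time invoking a counting estimate from Section \ref{sec-counting}. Good counting estimates give gain matching the target factor $L^\theta \alpha T^{4/5}$ per branching node (this is what makes $T \sim \alpha^{-5/4}$ the natural horizon), while bad estimates lose a polynomial factor in $L$ relative to this. To control the overall loss, I would then invoke the operation-tree construction of Section \ref{sec-opbound}: attach to each algorithm run a binary operation tree recording how the molecule was decomposed, and construct an injective operation map from bad steps to good steps from which extra gain can be transferred. Once the sharpness of this map is established, the product of all gains is still bounded by $(L^\theta \alpha T^{4/5})^n$, up to a base factor of $T^{-3/5}$ coming from the root-level time integration and up to $\langle k \rangle^{-20}$ extracted from the Schwartz decay of $n_{\mathrm{in}}$. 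Finally, summing over couples of order $n$ costs only $C^n n!$, which for $n \leq N^3$ is absorbed into $L^\theta$.

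The main obstacle is unquestionably Stage~4, namely the sharpness of the operation map. In $d=1$ the basic counting estimates are strictly weaker than in higher dimensions (as emphasized in the introduction), so one cannot merge several weak estimates into one strong one as in \cite{WKE}\cite{WKE2023}; every bad step must be individually matched to a specific good step. Designing the operation tree so that this matching is well-defined on \emph{every} molecule obtained from a spliced couple — and in particular never runs out of good steps to borrow from, and never double-counts — is what forces the specific shape of the algorithm, the precise definition of irregular chains, and the exponent $4/5$ in $\alpha T^{4/5}$. Everything earlier in the paper is arranged to make this combinatorial accounting close, and once it does the bound (\ref{eq-couples}) follows by the assembly above.
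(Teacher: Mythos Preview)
Your proposal is essentially correct and follows the paper's approach: reduce $\K_\Q$ to a counting problem via the oscillatory-integral bound (Lemma \ref{lem-osc-int}), splice out SG irregular chains to exploit cancellation among congruent couples, encode the remaining couple as a molecule, run the algorithm of Section \ref{sec-algorithm}, and control the number of bad two-vector countings via the operation map of Section \ref{sec-opbound}. One correction: the extra factor $T^{-3/5}$ is not ``root-level time integration'' but comes from the strict inequality $m_2 \leq 3(m_3 - 1)$ in Proposition \ref{prop-2vc-bound}---the ``$-1$'' (equivalently, the non-surjectivity of $\Phi$ at each leaf of the operation tree) is precisely what produces the $T^{-3/5}$ in Proposition \ref{prop-rigidity} and hence in (\ref{eq-couples}).
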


\begin{proposition}{(Bound on Operator $\Ell$)} \label{prop-remainder}
With probability $\geq 1 - L^{-A}$, the linear operator $\Ell$ defined in (\ref{eq-L}) satisfies  
\begin{equation} \label{remains}
||\Ell^n||_{Z \to Z} \lesssim \left( L^\theta \alpha T^{4/5}\right)^{n/2}L^{50}
\end{equation}
for each $ 0 \leq n \leq N$ and some $A \geq 40$. 
\end{proposition}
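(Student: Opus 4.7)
The plan is to expand $\Ell^n$ as a sum over a generalization of the ternary trees $\T$, reduce the $Z \to Z$ operator norm to a kernel estimate, and then combine a high-moment argument with (an extension of) Proposition~\ref{prop-couples}. First, I would iteratively unfold $\Ell^n b$. Each application of $\Ell$ sends a function to $\sum_{0 \leq n_1, n_2 < N}^{\mathrm{cyc}} \mathcal I \mathcal C(\J_{n_1}, \J_{n_2}, \cdot)$, so $n$-fold iteration produces a sum over ``stemmed trees'' $\mathcal F$: a spine of $n$ branching nodes, at each of which two of the three children are filled by full ternary trees $\J_{n_j}$ with $0 \leq n_j < N$ while the third continues the spine, together with a single distinguished ``free'' leaf at the bottom where $b$ is inserted. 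Each such $\mathcal F$ has at most $n(2N+1)$ branching nodes, and its contribution is the analogue of (\ref{eq-J}) with the free leaf evaluated at $b$ rather than at $\sqrt{n_{\mathrm{in}}}\, g$. This lets me write $(\Ell^n b)_k(t) = \sum_{k' \in \Z_L} \int_0^t H_n(t,s,k,k')\, b_{k'}(s)\, ds$ for a random kernel $H_n$ defined by a sum over decorated stemmed trees with the root decoration $k$ and free-leaf decoration $k'$.

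Second, I would reduce the operator norm to a kernel bound via a weighted Schur-type estimate: using the $\langle k \rangle^{10}$ weights and $L^{-1}$ factor in the definition of the $Z$ norm, together with Cauchy--Schwarz in $k'$, the operator norm is controlled by a quantity of the form $\sup_{k,t} \langle k \rangle^{10} \int_0^1 L^{-1} \sum_{k'} \langle k' \rangle^{-8} |H_n(t,s,k,k')|^2\, ds$. To upgrade an expectation estimate on this quantity to a high-probability statement, I would compute the $2p$-th moment $\E\|\Ell^n\|_{Z \to Z}^{2p}$ for $p \simeq A$. Applying Isserlis' theorem to the $2p$ copies of $H_n$ produces, for each pairing, a couple in the sense of Definition~\ref{def-couples} of order at most $2pn(2N+1)$, augmented by $2p$ additional ``free-leaf pairs'' tracking the $(k,k')$ slots of each copy. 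Taking a supremum over the free-leaf decorations rather than summing against $n_{\mathrm{in}}$ turns the bound (\ref{eq-couples}) into a factor $(L^\theta \alpha T^{4/5})^{np}$, and after summing over the (at most $C^{Nnp}$ many) couples and applying Chebyshev, one gets the $1 - L^{-A}$ bound with the $L^{50}$ prefactor absorbing the polynomial losses coming from the $2p$ free-leaf pairs and from the $\langle k \rangle^{-20}$-to-$\langle k \rangle^{10}$ conversion through the Schur test.

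The main obstacle is extending the couple-counting machinery of Proposition~\ref{prop-couples} to these augmented couples with free-leaf pairs. The cancellation/irregular-chain argument of Section~\ref{sec-splice} and the operation-tree bookkeeping of Section~\ref{sec-opbound} are both set up under the assumption that every leaf is either a random $g_k$ or paired via $\mathscr P$; here the free-leaf pairs act as open external edges that have to be incorporated into the molecule picture of Figure~\ref{fig-eg-couple} as additional hyperedges. I expect this change to only make the counting estimates of Section~\ref{sec-counting} easier --- since one takes a supremum over free-leaf decorations instead of summing against a Schwartz weight, one effectively trades one Schwartz decay factor for one deterministic counting constraint --- but the delicate point is verifying that the algorithm of Section~\ref{sec-algorithm} still terminates with the same bound on the number of bad counting steps per spine node, so that the net gain per application of $\Ell$ is indeed $(L^\theta \alpha T^{4/5})^{1/2}$ as claimed. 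The remaining bookkeeping (combinatorial factors, the $L^{0+}$ vs $L^\theta$ distinction, and the uniformity in $0 \leq n \leq N$) is routine once the counting is in place.
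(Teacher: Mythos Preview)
Your overall architecture matches the paper's: you correctly identify the flower-tree (``stemmed tree'') expansion of $\Ell^n$, reduce the $Z\to Z$ norm to a pointwise kernel bound via a weighted Schur test, and pass through a high-moment argument. The paper organizes this as Proposition~\ref{prop-Lm} (the second-moment bound $\E|(\Ell^n)^{m,\zeta}_{k,\ell}(t,s)|^2$ via flower couples) followed by hypercontractivity and Chebyshev.

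However, there is a genuine gap in your plan, precisely at the step ``compute $\E\|\Ell^n\|_{Z\to Z}^{2p}$ and apply Chebyshev.'' After the Schur reduction you still have a supremum over $(k,\ell,t,s)$ \emph{inside} the expectation, and $(k,\ell)$ ranges over an \emph{infinite} lattice; Isserlis on $2p$ copies gives you moments at each fixed $(k,\ell,t,s)$, but you cannot simply union-bound over infinitely many points. The paper resolves this by observing that the kernel depends on $(k,\ell)$ only through $k-\zeta\ell$ (which is a sum of non-flower leaf decorations, hence bounded by $L^\theta$) and through the resonance factors $\mu_j=|k|^\sigma-\zeta_{\mathfrak n_{j+1}}|k_j|^\sigma$ along the stem. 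A separate number-theoretic lemma (Lemma~\ref{lem-finite}) then shows that, after a dyadic decomposition in the size $R$ of the $\Omega_{\mathfrak n_j}$, the map $m\mapsto |k|^\sigma\pm|k+m|^\sigma$ on the relevant domain is determined up to $R^{-1}$ by one of only $R^C$ surrogate values $k'$. This reduces the sup over $(k,\ell)$ to polynomially many points, after which hypercontractivity, a derivative bound in $(t,s)$, and Gagliardo--Nirenberg close the argument. Nothing in your proposal supplies this reduction.

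Relatedly, your claim that freezing the free-leaf decoration ``only makes the counting estimates easier'' is not right. The flower decoration $\ell$ can be arbitrarily large, which forces every node on the stem to carry a decoration $\gtrsim|\ell|$; since the counting bounds of Proposition~\ref{prop-vc} carry the factor $D^{2-\sigma}$, this is a potential loss, not a gain. The paper's Proposition~\ref{prop-rigidity} is deliberately stated with the hypothesis ``for each $v$, there are at least two $\ell\sim v$ with $|k_\ell^0|\le D$,'' and the flower-couple structure (exactly one child per stem node continues the stem) is what guarantees this hypothesis survives; the algorithm of Section~\ref{sec-algorithm} is already set up to respect it, so no modification is needed there, but your heuristic that the trade is favorable misses the point.
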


\begin{remark}
To prove Theorems \ref{mainthm} and \ref{mmtthm}, we require Proposition \ref{prop-couples} over sums of certain subsets of couples of order $n$. However, the proof of Proposition \ref{prop-couples} clarifies for which subsets of couples we may sum over and requires the notion of congruence defined in Section \ref{subsec-cancellation}. 
\end{remark}
\subsection{Molecules} In order to prove the above Propositions, we will need the notion of molecules defined as in \cite{WKE, WKE2023}: 

\begin{definition}{(Molecules)} \label{def-molecules}
A \textit{molecule} $\mathbb M$ is a directed graph, formed by vertices (called \textit{atoms}) and edges (called \textit{bonds}), where multiple and self-connecting bonds are allowed, and each atom has out-degree at most 2 and in-degree at most 2. We write $v \in \mathbb M$ and $\ell \in \mathbb M$ for atoms $v$ and bonds $\ell$ in $\mathbb M$, and write $\ell \sim v$ if $v$ is an endpoint of $\ell$. For $\ell \sim v$, we may define $\zeta_{v,\ell}$ to be 1 if $\ell$ is outgoing from $v$, and -1 otherwise. We further require that $\mathbb M$ does not have any connected components of only degree 4 atoms, where connectivity is understood in terms of undirected graphs. For a molecule, define 
\begin{equation}
\chi := E - V + F,
\end{equation}
where $E$ is the number of bonds, $V$ is the number of atoms, and  $F$ is the number of components. An \textit{atomic group} in a molecule is a subset of atoms, together with all bonds between these atoms. A single bond $\ell$ is a \textit{bridge} if removing $\ell$ adds one new component. 
\end{definition}

\begin{definition}{(Molecule of couples)} \label{couple-molecule}
Let $\mathcal Q$ be a nontrivial couple. We define the corresponding \textit{molecule} $\mathbb M = \mathbb M(\mathcal Q)$ as follows. The atoms correspond to the branching nodes $\mathfrak n \in \mathcal N$. For branching nodes $\mathfrak n_1, \mathfrak n_2$, we connect the corresponding atoms denoted $v_1, v_2$ if (i) one of $\mathfrak n_1, \mathfrak n_2$ is a parent of the other, or (ii) a child of $\mathfrak n_1$ is paired to a child of $\mathfrak n_2$ as leaves. We may form a \textit{labelled molecule} by labeling each bond. In the case of (i) we label this bond as PC (Parent-Child) and place a P at the parent atom and C at the child atom  (from the corresponding couple), and in the case of (ii), we label the bond by LP (Leaf Pair). 

We fix the direction of each bond as follows. An LP bond should go away from the atom whose corresponding child in the couple has sign - towards the other atom whose corresponding child in the couple has a + sign. A PC bond should go from the P to the C if the child atom corresponds to a branching node with sign -, and the opposite if the child atom corresponds to a branching node with sign +. See Figure \ref{fig-eg-labeled-mol} to see the labelled molecule corresponding to the couple and molecule in Figure \ref{fig-eg-couple}. 

For any atom $v \in \mathbb M(\mathcal Q)$, let $\mathfrak n = \mathfrak n(v)$ be the corresponding branching node in $\mathcal Q$. For any bond $\ell \sim v$, define $\mathfrak m = \mathfrak m(v,l)$ such that (i) if $\ell$ is PC with $v$ labelled C, then $\mathfrak m = \mathfrak n$; (ii) if $\ell$ is PC with $v$ labelled P, them $\mathfrak m$ is the branching node corresponding to the other endpoint of $\ell$; (iii) if $\ell$ is LP them $\mathfrak m$ is the leaf in the leaf pair defining $\ell$ that is a child of $\mathfrak n$. 
\end{definition}

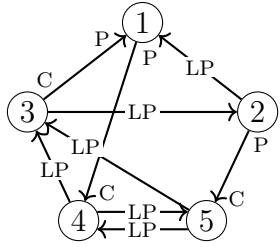
\begin{figure}
\begin{tikzpicture}[scale = .85]
    \tikzstyle{open node} = [circle, draw = black, inner sep = 2]
    \tikzstyle{empty node} = [inner sep=1,outer sep=0]
    \node[open node] (3) at (.2, .1) {3};
    \node[open node] (2) at (3.8, .1) {2};
    \node[open node] (1) at (2,1.5) {1};
    \node[open node] (4) at (1, -1.6) {4};
    \node[open node] (5) at (3, -1.6) {5};

    \draw[->, thick] (3) -- node[empty node, midway, fill = white, draw = none]{\tiny{LP}} (2);
    \draw[->, thick] (2) -- node[empty node, midway, fill = white, draw = none]{\tiny{LP}} (1);
    \draw[->, thick] (3) -- 
        node[above, pos = .01, draw = none]{\tiny{C}}
        node[left, pos = .94, draw = none]{\tiny{P}} (1);
    \draw[->, thick] (1) -- 
        node[right, pos = .1, draw = none]{\tiny{P}}
        node[right, pos = .95, draw = none]{\tiny{C}}(4);
    \draw[->, thick] (4.25) --node[empty node, midway, fill = white,draw = none]{\tiny{LP}} (5.155);
    \draw[->, thick] (5.205) -- node[empty node, midway, fill = white, draw = none]{\tiny{LP}} (4.335);
    \draw[->, thick] (5) -- node[empty node, pos = .72, fill = white, draw = none]{\tiny{LP}} (3);
    \draw[->, thick] (4) -- node[empty node, pos = .45, fill = white, draw = none]{\tiny{LP}} (3);
    \draw[->, thick] (2) -- 
        node[right, pos = .2, draw = none]{\tiny{P}}
        node[right, pos = .95, draw = none]{\tiny{C}}(5);
\end{tikzpicture}
\caption{An example of a labelled molecule.}
\label{fig-eg-labeled-mol}
\end{figure}

\begin{proposition}{(Correspondence between molecules and couples)}
We have the following relationship between molecules and couples: 
\begin{enumerate}
\item For any nontrivial couple $\mathcal Q$ of order $n$, the graph $\mathbb M = \mathbb M(\mathcal Q)$ defined in Definition \ref{couple-molecule} is a molecule. $\mathbb M$ is connected with $n$ atoms and $2n-1$ bonds and has either two atoms of degree 3 or one of degree 2, with all remaining atoms having degree 4. 
\item Given a molecule $\mathbb M$ with $n$ atoms as in Definition \ref{def-molecules}, the number of couples $\mathcal Q$ (if any) such that $\mathbb M(\mathcal Q) = \mathbb M$ is at most $C^n$.
\end{enumerate}
\end{proposition}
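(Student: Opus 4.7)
For part (1) I proceed by direct counting, splitting into two cases depending on whether exactly one of $\mathcal{T}^+, \mathcal{T}^-$ is trivial. Since a ternary tree of order $m$ has exactly $m$ branching nodes and $2m+1$ leaves, a couple of order $n$ has $n$ atoms and $n+1$ leaf pairs. The PC bonds correspond to non-root branching nodes, contributing $n-2$ when both trees are nontrivial and $n-1$ when one is trivial. The LP bonds correspond to leaf pairs whose two endpoints are both children of branching nodes: all $n+1$ pairs qualify in the both-nontrivial case, and only $n$ qualify in the one-trivial case, since the pair containing the root of the trivial tree produces no bond. Either way the totals sum to $2n-1$. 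For the degree pattern, each non-root branching node has three child bonds and one parent bond for degree $4$, while each root has only its three child bonds for degree $3$. The single exception is when some leaf $\mathfrak{l}$ of an atom $\mathfrak{n}$ is paired with the root of the trivial tree; this removes one bond from $\mathfrak{n}$, yielding either two degree-$3$ atoms (when $\mathfrak{n}$ is non-root) or a unique degree-$2$ atom (when $\mathfrak{n}$ is itself the root of the nontrivial tree).

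To verify the remaining structural requirements of $\mathbb{M}(\mathcal{Q})$ I use the sign data. At a branching node $\mathfrak{n}$ of sign $\zeta_{\mathfrak{n}}$, the three children carry signs $(+\zeta_{\mathfrak{n}}, -\zeta_{\mathfrak{n}}, +\zeta_{\mathfrak{n}})$. Direct inspection of the direction rules in Definition \ref{couple-molecule} shows that any bond (LP or PC) to a child of sign $-$ is outgoing from $\mathfrak{n}$ and any bond to a child of sign $+$ is incoming, and the parent PC-bond (when present) exactly compensates the asymmetry, so checking the four $(\zeta_{\mathfrak{n}}, \text{root or not})$ cases gives in-degree $\leq 2$ and out-degree $\leq 2$ everywhere. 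For connectedness I use a sign-counting lemma: a tree of order $m$ with root sign $\zeta$ has $m+1$ leaves of sign $\zeta$ and $m$ leaves of sign $-\zeta$ (by induction on $m$, or directly from $\sum_{\mathfrak{l}} \zeta_{\mathfrak{l}} = \zeta_{\mathfrak{r}}$). Since a leaf pair matches opposite signs and these counts are unequal within each tree, at least one pair must cross between the two trees. In the both-nontrivial case this cross pair produces an LP bond joining the two internally PC-connected subgraphs; in the one-trivial case all atoms live in the nontrivial tree, whose PC bonds already span it.

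For part (2), I reconstruct candidate couples from $\mathbb{M}$ via a sequence of $O(1)$ choices at each atom. First the tree-root atoms are identified from the exceptional-degree data: the unique degree-$2$ atom when there is one, or the pair of degree-$3$ atoms labelled as $+$-root and $-$-root, costing at most $O(n^2)$. The PC-bonds with their P/C labels then force the two tree hierarchies, and the bond directions determine the sign of every atom. At each atom the three child bonds are partially ordered by the in/out data (the outgoing child-bond at a $+$-atom must be the middle child, etc.), leaving at most $2$ choices to order the remaining two equal-sign siblings, and for each pair of atoms joined by $k$ parallel LP-bonds there are at most $k! \leq 24$ further choices to match specific child-leaves. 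Multiplying these gives at most $C^n$ candidate couples. The main obstacle I anticipate is the careful bookkeeping in part (1) around the one-trivial-tree subcase, where the bond absent from the trivial-root pairing must be consistently subtracted once from the global bond count and once from the appropriate atom's degree; verifying that the four sign/root configurations all satisfy the claimed in/out-degree bounds and reproduce exactly the exceptional-degree pattern requires a careful enumeration that the full proof should record explicitly.
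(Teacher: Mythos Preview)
The paper does not actually prove this proposition here; it simply cites Propositions~9.4 and~9.6 of \cite{WKE}. Your proposal, by contrast, supplies a direct self-contained argument, so there is nothing to compare at the level of method.

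Your argument for part~(1) is correct. The bond count via the PC/LP dichotomy, the case split on whether one tree is trivial, the degree analysis, the in/out-degree verification via the sign rules, and the sign-counting lemma forcing a cross-tree LP pair are all sound. One small remark: you do not explicitly check the clause in Definition~\ref{def-molecules} that no component consists only of degree-$4$ atoms, but this follows immediately from connectedness plus the presence of a degree-$3$ (or degree-$2$) atom, so no real work is missing.

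For part~(2) there is a gap in the bookkeeping. You write that ``the PC-bonds with their P/C labels then force the two tree hierarchies,'' but the molecule in Definition~\ref{def-molecules} is just a directed multigraph: the PC/LP labels and the P/C ends are \emph{not} part of the data and must be chosen. Concretely, you must account for the choice, at each non-root atom, of which incident bond is its parent PC bond (at most $4$ options per atom, so $\leq 4^n$ total), after which the remaining incident bonds become child bonds and the LP/PC status of every bond is determined by whether its other endpoint regards it as a parent bond. This extra $C^n$ factor is harmless for the conclusion, and once you insert it your reconstruction scheme (root identification, sign from bond directions, at most $2$ orderings of same-sign siblings, and at most $k!$ matchings across $k$ parallel LP bonds) goes through. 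Also, your $O(n^2)$ cost for identifying roots is a substantial overcount: the roots must lie among the at most two atoms of exceptional degree, so this step costs $O(1)$.
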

\begin{proof}
See Propositions 9.4 and 9.6 of \cite{WKE}. 
\end{proof}

\begin{definition}{(Decorations of Molecules)}
Given a molecule $\mathbb M$, suppose we fix $k_v \in \Z_L$ and $\beta_v \in \R$ for each $v \in \mathbb M$ such that $k_v = 0$ when $v$ has degree 4. Define a $(k_v, \beta_v)$-decoration of $\mathbb M$ to be the set $(k_\ell)$ for all bonds $\ell \in \mathbb M$, such that $k_\ell \in \Z_L$ and for each atom $v$,
\begin{align}\label{eq-mol-k}
\sum_{\ell \sim v} \zeta_{v,\ell} k_\ell &= k_v, \\
\left|\Gamma_v - \beta_v \right| &< T^{-1},
\end{align}
where $\Gamma_v$ is defined as 
\begin{align}\label{eq-mol-omega}
\Gamma_v &= \sum_{\ell \sim v} \zeta_{v, \ell} \omega(k_\ell).
\end{align}
Suppose $\mathbb M = \mathbb M(\mathcal Q)$ comes from a nontrivial couple $\mathcal Q$. For $k \in \Z_L$, we define a $k$-decoration of $\mathbb M$ to be a $(k_v, \beta_v)$-decoration where 
\begin{equation}
k_v = \begin{cases}
0 & \text{if } v \text{ has degree 2 or 4,} \\
+k & \text{if } v \text{ has out-degree 2 and in-degree 1,} \\
-k & \text{if } v \text{ has out-degree 1 and in-degree 2.} 
\end{cases}
\end{equation}
Given any $k$-decoration of $\mathcal Q$ in the sense of Definition \ref{def-couples}, define a $k$-decoration of $\mathbb M(\mathcal Q)$ such that $k_{\ell} = k_{\mathfrak m(v,\ell)}$ for an endpoint $v$ of $\ell$. It is easy to verify that $k_\ell$ is well-defined, and gives a one-to-one correspondence between $k$-decorations of $\mathcal Q$ and $k$-decorations of $\mathbb M(\mathcal Q)$. Moreover, for such decorations we have 
\begin{equation}
\Gamma_v = \begin{cases}
0 & \text{if } v \text{ has degree 2,} \\
-\zeta_{\mathfrak n(v)}\Omega_{\mathfrak n(v)} & \text{if } v \text{ has degree 4,} \\
-\zeta_{\mathfrak n(v)}\Omega_{\mathfrak n(v)} + \omega(k) & \text{if } v \text{ has out-degree 2 and in-degree 1,} \\
-\zeta_{\mathfrak n(v)}\Omega_{\mathfrak n(v)} - \omega(k) & \text{if } v \text{ has out-degree 1 and in-degree 2.} \\
\end{cases}
\end{equation}
\end{definition}

\begin{definition}{(Degenerate Atoms)}
Given a molecule $\mathbb M$ with a $(k_v, \beta_v)$-decoration, an atom $v$ is \textit{degenerate} if there are two bonds $\ell_1, \ell_2 \sim v$ of opposite direction such that $k_{\ell_1} = k_{\ell_2}$. Furthermore, $v$ is \textit{fully degenerate} if all bonds $\ell \sim v$ have the same value of $k_\ell$. 
\end{definition}

\subsection{Statement of the counting estimates} 
\label{subsec-counting}Another key part in proving the main estimates above will be to reduce them to counting estimates on the number of decorations of molecules. This will be done via an algorithm which will count smaller atomic groups, focusing on one or two atoms at a time. Therefore, we introduce notation for counting, coming from \cite{WKE2023}:

\begin{definition}\label{def-counting-notation}
Fix $r \leq 3$, $k \in \Z_L$, $\beta \in \R$, $a_1, \ldots, a_r \in \R$, and $\epsilon_1, \ldots, \epsilon_n \in \{\pm\}$. Then, a tuple $(j_1^{\epsilon_1}, \ldots, j_r^{\epsilon_r})$ denotes the set of variables $\{k_1, \ldots k_r\} \in \Z_L^r$ which satisfy $|k_j - a_j| \leq 1$ as well as
\begin{equation}
\sum_{i = 1}^r \epsilon_i k_{j_i} = k, \hspace{.5cm} \left| \sum_{i = 1}^r \epsilon_i |k_{j_i}|^\sigma - \beta \right| \leq T^{-1}.
\end{equation}
We also impose that among $\{k, a_1, \ldots, a_r\}$ at most two are $\geq D$ for some fixed $D \gtrsim 1$, possibly dependent on $L$ (this condition is not needed when $\sigma = 2$). We may sometimes omit the signs $\epsilon_j$, in which case it is assumed the signs can be arbitrary. Denote the corresponding number of solutions $\{k_1, \ldots, k_r\}$ by $\mathfrak C$ and we say the tuple is associated to an $r$-vector counting ($r$ v.c.).

Suppose a collection of $m$ tuples, each of which has its own fixed value of $r^{(i)}$, $k^{(i)}$, and $\beta^{(i)}$ $(i=1, \ldots, m)$, contains $n$ distinct values of $j$ and we fix $a_1, \ldots, a_n \in \R$ (eg. The collection of tuples $\{(1^+, 2^-, 3^+), (1^+, 4^-)\}$ has $n = 4$). Then, the collection of tuples denotes the set of variables $\{k_1, \ldots, k_n\}$ satisfying the joint system coming from each of the tuples. We denote the corresponding number of solutions $\{k_1, \ldots, k_n\}$ by $\mathfrak C$ and say the collection of tuples is associated to an $n$-vector counting ($n$ v.c.).
\end{definition}

\begin{proposition} \label{prop-vc}
Fix $T$ so that if $\sigma \neq 2$, $T \ll L$ and further if $0 < \sigma < 1$, $T \ll L^{\frac{1}{2 - \sigma}}$. Then, for any $\theta > 0$, we have the following bounds: 
\begin{enumerate}
\item For $(1,2)$, we have $\mathfrak C \lesssim L$.
\item For $(1^+, 2^-, 3^+)$, we have $\mathfrak C \lesssim L^{2 + \theta}T^{-1}D^{2-\sigma}$. 
\end{enumerate}
Furthermore, for $(1,2)$ if at least one of $a_1, a_2 \leq D$, then
\begin{enumerate}
\item For $(1^+, 2^+)$, we have $\mathfrak C \lesssim LT^{-\frac{1}{2}} D$.
\item For $(1^+, 2^-)$, we have $\mathfrak C \lesssim \min(L, LT^{-1}h^{-1} D^{2-\sigma})$, where $h = \min(1, |k|)$. 
\end{enumerate}
\end{proposition}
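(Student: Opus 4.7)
Each of the four bounds arises by eliminating one variable through the momentum constraint, then analyzing the resulting phase function $\Phi$ (whose level sets encode the energy constraint) via its first or second derivatives. In every case the counting reduces to measuring the length of $\{|\Phi-\beta|\leq T^{-1}\}$ in a lattice of spacing $L^{-1}$.

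\emph{Bound (1) for $(1,2)$.} The momentum constraint $\pm k_1 \pm k_2 = k$ alone determines one variable from the other. Since $k_j \in \Z_L = L^{-1}\Z$ lies in an interval of length $O(1)$ by $|k_j - a_j| \leq 1$, there are at most $O(L)$ choices; the energy constraint is not used.

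\emph{Bound (3) for $(1^+,2^+)$.} Eliminate $k_2 = k - k_1$ and define $\Phi(k_1) := |k_1|^\sigma + |k-k_1|^\sigma$. The hypothesis that some $a_i \leq D$ combined with $|k_j - a_j|\leq 1$ forces $|k_1|, |k_2| \lesssim D$. A direct computation gives
\[
|\Phi''(k_1)| = \sigma|\sigma-1|\bigl(|k_1|^{\sigma-2} + |k-k_1|^{\sigma-2}\bigr) \gtrsim D^{\sigma-2},
\]
so by Taylor expansion the set $\{|\Phi-\beta|\leq T^{-1}\}$ lies in at most two intervals of length $\lesssim T^{-1/2}D^{(2-\sigma)/2}$. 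Counting lattice points yields $\mathfrak{C}\lesssim L T^{-1/2}D^{(2-\sigma)/2}\leq L T^{-1/2}D$, since $(2-\sigma)/2 \leq 1$.

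\emph{Bound (4) for $(1^+,2^-)$.} Eliminate $k_2 = k_1 - k$ and set $\Phi(k_1) := |k_1|^\sigma - |k_1 - k|^\sigma$. A case split according to whether $k_1$ and $k_1 - k$ share a sign or $0$ lies between them, combined with the mean value theorem applied to $t \mapsto \sigma\operatorname{sgn}(t)|t|^{\sigma-1}$ on $[k_1 - k, k_1]$, yields
\[
|\Phi'(k_1)| \gtrsim h\, D^{\sigma-2}, \qquad h = \min(1, |k|),
\]
outside a small degenerate subinterval handled by the trivial $L$ bound of (1). Thus $\{|\Phi-\beta|\leq T^{-1}\}$ has length $\lesssim T^{-1}h^{-1}D^{2-\sigma}$, giving $\min(L, LT^{-1}h^{-1}D^{2-\sigma})$.

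\emph{Bound (2) for $(1^+,2^-,3^+)$.} Eliminate $k_1 = k + k_2 - k_3$ and freeze $k_3$; the slice in $(k_1, k_2)$ is a $(1^+,2^-)$ problem with $k$ replaced by $k - k_3$. Apply Bound (4) to each slice and sum dyadically in $|k - k_3|\sim 2^{-j}$ for $j = 0,1,\ldots,O(\log L)$. In the regime where the second alternative in (4) is active, the dyadic contribution is
\[
\bigl(2^{-j}L\bigr)\cdot\bigl(LT^{-1}\cdot 2^{j} D^{2-\sigma}\bigr) = L^2 T^{-1}D^{2-\sigma},
\]
and summing the $O(\log L)$ scales absorbs into an $L^\theta$ loss; in the remaining regime where the trivial alternative dominates the same total bound is recovered. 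This yields $\mathfrak{C}\lesssim L^{2+\theta}T^{-1}D^{2-\sigma}$.

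\emph{Main obstacle.} The crux is the lower bound $|\Phi'|\gtrsim hD^{\sigma-2}$ in Bound (4): it has to be established uniformly across a case analysis on the signs of $k_1$ and $k_1 - k$, the position of $0$ relative to them, and the relative sizes of $|k|$ and $D$, since the behavior of $|x|^{\sigma-1}$ for $0<\sigma<1$ differs sharply from $1<\sigma<2$. The restrictions $T\ll L$ (for $\sigma \neq 2$) and $T \ll L^{1/(2-\sigma)}$ (for $0<\sigma<1$) are precisely what is needed so that the resulting interval width $T^{-1}h^{-1}D^{2-\sigma}$, or $T^{-1/2}D^{(2-\sigma)/2}$ in Bound (3), remains compatible with the ambient scale in $\Z_L$ and does not collapse below the trivial threshold.
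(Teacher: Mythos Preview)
Your approach is genuinely different from the paper's. For the nontrivial bounds (3), (4), and (2), the paper proceeds via Poisson summation: it writes the lattice sum as an integral plus a sum over nonzero frequencies $f$, kills the tail in $f$ by integration by parts (this is exactly where $T\ll L$ and, for $0<\sigma<1$, $T\ll L^{1/(2-\sigma)}$ enter, since one loses a factor $T$ or $T^{2-\sigma}$ per integration while gaining $L$), and then bounds the $O(1)$ main terms by a change of variables $s=\Omega(u)$ whose Jacobian is precisely $|\Phi'|^{-1}$. You instead estimate the sublevel set $\{|\Phi-\beta|\le T^{-1}\}$ directly via lower bounds on $|\Phi'|$ or $|\Phi''|$, and then count lattice points. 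Both routes ultimately rest on the same derivative estimate $|\Phi'|\gtrsim hD^{\sigma-2}$ (equivalently the paper's bound $A\lesssim D^{2-\sigma}/h$), so the core analytic content is the same; your packaging is more elementary, while the paper's Poisson framework is more uniform and makes the role of the $T$ hypotheses completely explicit.

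A few points to tighten. In Bound~(3) you assert that one $a_i\le D$ forces \emph{both} $|k_1|,|k_2|\lesssim D$; only one is guaranteed, but that already suffices for $|\Phi''|\gtrsim D^{\sigma-2}$ since the two terms in $\Phi''$ are nonnegative. In Bound~(2), slicing over $k_3$ and invoking Bound~(4) on the $(k_1,k_2)$ fiber requires that at least one of $a_1,a_2$ is $\le D$; this can fail (e.g.\ if $k,a_3\le D$ and $a_1,a_2>D$), so you should slice over whichever variable leaves a valid pair behind. Finally, your closing claim that the restrictions $T\ll L$ and $T\ll L^{1/(2-\sigma)}$ are ``precisely what is needed'' for your interval lengths to stay above the lattice scale is not substantiated: in the sublevel-set approach the additive $+O(1)$ from lattice counting appears to be absorbed under the milder condition $T\lesssim L$ alone, whereas in the paper's proof the stronger $T\ll L^{1/(2-\sigma)}$ constraint for $\sigma<1$ arises specifically from bounding $\tau\nabla\Omega$ (with $|\nabla\Omega|\lesssim T^{1-\sigma}$ after cutting off $|u|\gtrsim T^{-1}$) against $Lf$ in the Poisson tail. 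You should either locate where your argument genuinely uses the sharper hypothesis or note that your method does not require it.
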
 

\section{Discussion of proof and results}
\label{sec-overview}
\subsection{Overview of the proof} While the result covers subcritical timescales, our proof utilizes the strategy in \cite{WKE, WKE2023}, which dealt with the critical problem.  Proposition \ref{prop-couples} is used to bound higher iterates ($n \geq 3$), while Proposition \ref{prop-remainder} allows us to bound the remainder term, with the lowest iterates being dealt with explicitly. The bulk of the paper is devoted to the analysis of $\K_Q$, which can be generalized to the operator $\Ell$, defined in (\ref{eq-L}). 

The main step to analyzing $\K_\Q$ is to reduce it to a counting problem on molecules via Proposition \ref{prop-rigidity}, and then perform a counting algorithm. This algorithm counts decorations of bonds at an atom one at a time using the counting estimates in Proposition \ref{prop-vc}. The general counting estimate for $(1,2)$ of $\mathfrak C \lesssim L$ is the main hindrance in reaching $T_{\mathrm{kin}}$, so the algorithm is optimized to minimize the number of times this counting estimate is used. To further minimize the use of this counting estimate, we isolate irregular chains (Definition \ref{def-chain-couple}), structures in couples which solely depend on this counting estimate but luckily exhibit cancellation. This cancellation occurs between \textit{congruent} couples (Definition \ref{def-congruent}), whose irregular chains cancel with one another, allowing us to remove the chains via \textit{splicing} (Definition \ref{def-splicing}). This is why Proposition \ref{prop-couples} is stated for sums of $\K_Q$, which allows cancellation between congruent couples. After this splicing, the problem reduces to counting estimates, which we discuss further in Section \ref{subsec-counting-discussion}. 

Bounding $\Ell^n$ in Proposition \ref{prop-remainder} allows us to invert the operator $(1 - \Ell)$ and in turn bound the remainder $b$ using a contraction mapping with (\ref{eq-binv}). In order to prove Proposition \ref{prop-remainder}, we extend Proposition \ref{prop-couples} to \textit{flower trees} and \textit{flower couples} (Definition \ref{def-flower}) in which we fix the decoration at a leaf, arbitrarily large. Note that in Proposition \ref{prop-remainder}, no expectations are taken and rather we move to a set of overwhelming probability using large deviation estimates. 

To deal with the lowest iterates, we explicitly write the exact sums coming from these couples. For $n = 0$, we obtain the initial data $n_{\mathrm{in}}$, while the $n = 1$ iterate is 0 and $n = 2$ gives the collision kernel (\ref{KIN}) to within $o_{\ell_k^\infty}\left(\frac{t}{T_{\mathrm{kin}}}\right)$. A key tool when $n = 2$ is Corollary \ref{cor-iterates} which allows us to switch from a sum to integral(s) converging to the first iterate. 

\subsection{Counting in \texorpdfstring{$d = 1$}{d=1} and the algorithm} \label{subsec-counting-discussion} Here, we point out several subtleties of our counting estimates in Proposition \ref{prop-vc} and compare to the higher dimensional setting. 

\subsubsection{Dependence of $T$ on $\sigma$} As mentioned above, the counting estimate of $\mathfrak C \lesssim L$ for $(1,2)$ prevents us from reaching $T_{\mathrm{kin}}$ for any value of $\sigma$. However, the counting estimates additionally impose that $T \ll L$ if $\sigma \neq 2$ and further that $T \ll L^{\frac{1}{2-\sigma}}$ if $0 < \sigma < 1$. This prevents us from reaching $L^{-\epsilon}\alpha^{-\frac{5}{4}}$ for some scaling laws if $\sigma \neq 2$. 

\subsubsection{Algorithm on Molecules} Each step of our algorithm removes an atom and/or bond(s), reducing the value of $\chi$ by $\Delta \chi$ on the remaining molecule and performing the counting problem on the removed bonds to obtain the value $\mathfrak C$. In order to reach $T = T_{\mathrm{kin}}$, on average each step of the algorithm would need to satisfy $\left(\frac{\alpha T}{L} \right)^{-\Delta \chi} \mathfrak C \ll 1$, or $\mathfrak C \ll (LT^{-\frac{1}{2}})^{-\Delta \chi}$. The three-vector countings we perform in our algorithm have $\Delta \chi = -2$ and $\mathfrak C \lesssim L^2 T^{-1}$, while the two-vector ones have $\Delta \chi = -1$ and in the worst case $\mathfrak C \lesssim L$. So, while our three-vector countings would be sufficient to get close to $T_{\mathrm{kin}}$, the same cannot be said of all of our two-vector counting estimates. The two-vector countings which would be sufficient require additional assumptions on the size and/or gap of the decoration. 

Using only the counting estimates in Proposition \ref{prop-vc}, we encounter many molecules that must use the bad estimate of $\mathfrak C \lesssim L$ for two-vector counting. The smallest such molecule is in Figure \ref{fig-eg-couple}. Our algorithm allows us to bound the number of such two-vector counting steps in Proposition \ref{prop-2vc-bound}, after cancellation, by
\begin{equation} \label{eq-rough-bound}
\text{\# two-vector countings} \leq 3 \times (\text{\# three-vector countings} -1).
\end{equation}
This bound implies that we must consider $T < \alpha^{-\frac{5}{4}}$. In order to achieve it, we map the two-vector countings to the three-vector countings (Definition \ref{def-mapping}), for which we develop \textit{operation trees} (Definition \ref{def-op-tree}) to track the steps of the algorithm precisely. This method is completely new and well-adapted to the case of $d = 1$. The additional three-vector counting in (\ref{eq-rough-bound}) allows us to obtain the factor $o_{\ell_k^\infty}\left(\frac{t}{T_{\mathrm{kin}}}\right)$ in Theorems \ref{mainthm} and \ref{mmtthm}. 

The bound (\ref{eq-rough-bound}) is not quite sharp, in the sense that the factor of 3 in \eqref{eq-rough-bound} could be improved to a factor of 2. This was essentially shown in the more recent work \cite{1DWWWKE}. However, this bound cannot be improved as there is a molecule which saturates it, seen in Figure \ref{fig-eg-molecule}. We show the repeated pattern twice, although it may appear any number of times and will still saturate (\ref{eq-rough-bound}). Note that there are couples corresponding to this molecule after the splicing step (ie. the double bonds are not part of irregular chains and therefore do not exhibit cancellation). 

\begin{figure}
\begin{tikzpicture}[scale = .9]
    \tikzstyle{n node} = [circle, draw = black]
    \tikzstyle{o node} = [inner sep=2,outer sep=0]
    \node[n node] (2) at (-4,0) {};
    \node[n node] (3) at (-3,0) {};
    \node[n node] (4) at (-2,0) {};
    \node[n node] (5) at (-1,0) {};
    \node[o node] (13) at (0,0) {$\cdots$};
    \node[n node] (6) at (1,0) {};
    \node[n node] (7) at (2,0) {};
    \node[n node] (8) at (3,0) {};
    \node[n node] (9) at (4,0) {};
    \node[n node] (10) at (5,0) {};
    \node[n node] (11) at (-2.5,1) {};
    \node[n node] (12) at (2.5,1) {};

    \draw[->,  thick] (2) -- (3);
    \draw[->,  thick] (3.25) -- (4.155);
    \draw[->, thick] (4.205) -- (3.335);
    \draw[->,  thick] (4) -- (5);
    \draw[->,  thick] (5.25) -- (-.3,.3);
    \draw[->, thick] (13.193) -- (5.335);
    \draw[->,  thick] (.3, .3) -- (6.155);
    \draw[->, thick] (6.205) -- (13.347);
    \draw[->,  thick] (6) -- (7);
    \draw[->,  thick] (7.25) -- (8.155);
    \draw[->, thick] (8.205) -- (7.335);
    \draw[->,  thick] (8) -- (9);
    \draw[->,  thick] (9.25) -- (10.155);
    \draw[->, thick] (10.205) -- (9.335);
    \draw[->,  thick] (3) -- (11);
    \draw[->,  thick] (11) -- (2);    
    \draw[->,  thick] (5) -- (11);    
    \draw[->,  thick] (11) -- (4);
    \draw[->,  thick] (7) -- (12);
    \draw[->,  thick] (12) -- (6);    
    \draw[->,  thick] (9) -- (12);    
    \draw[->,  thick] (12) -- (8);
    \draw[->,   thick] (2.275) to [out=-10,in=-170] (10.265);
\end{tikzpicture}
\caption{A molecule demonstrating a restriction on the bound for two-vector countings.}
\label{fig-eg-molecule}
\end{figure}

\subsubsection{Lack of combined counting estimates} In higher dimensions, two-vector counting is also an issue. However, in \cite{WKE, WKE2023}, the authors notice that a gain occurs by combining a two-vector counting with a three-vector counting in a 5-vector counting estimate and utilize that to overcome two-vector counting losses. The counting problems for which they see this gain are represented by the tuples $\{(1^+, 2^-, 3^+), (1^+, 4^-, 5^+)\}$ and $\{(1^+, 2^-, 3^+), (1^+, 2^-, 4^+, 5^-)\}$, whose corresponding atomic groups are shown in Figure \ref{fig-5vc}. When performed in two steps, these can have an estimate of $L^3T^{-1}$, worse than the $L^3 T^{-\frac{3}{2}}$ needed to reach $T_{\mathrm{kin}}$. However, a five-vector counting in $d \geq 3$ gives an estimate of $L^3T^{-2}$. The following proposition proved in Section \ref{sec-counting} shows that this five-vector counting estimate is not generally true in $d  =1$. 

\begin{proposition}\label{prop-5vc}
Fix $\sigma = 2$. Then, there are choices of $a_i$ and $\beta^{(j)}, k^{(j)}$ such that \newline$\{(1^+, 2^-, 3^+), (1^+, 4^-, 5^+)\}$ has $\mathfrak C \gg L^3T^{-\frac{3}{2}}$, where $\mathfrak C$ denotes the solution to the joint counting problem and $\beta^{(j)}, k^{(j)}$ denote the fixed constants $\beta, k$ for the $j$th tuple. 
\end{proposition}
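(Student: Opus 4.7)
The plan is to exhibit an explicit choice of parameters making the joint count much larger than $L^3 T^{-3/2}$. The starting point is the NLS resonance factorization in $d = 1$: for $\sigma = 2$ and a tuple $(1^+, 2^-, 3^+)$ with momentum constraint $k_1 - k_2 + k_3 = k$, substituting $k_2 = k_1 + k_3 - k$ gives
\begin{equation*}
k_1^2 - k_2^2 + k_3^2 - k^2 \;=\; -2(k_1 - k)(k_3 - k).
\end{equation*}
This identity is the source of degeneracy: when $k_1$ is close to $k$, the resonance constraint on $k_3$ becomes essentially vacuous. Since the shared variable $k_1$ appears in both tuples, tuning $k_1$ near the common resonant value weakens both resonances simultaneously.

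Concretely, I would take $a_1 = a_2 = a_3 = a_4 = a_5 = 0$, $k^{(1)} = k^{(2)} = 0$, and $\beta^{(1)} = \beta^{(2)} = 0$. The hypothesis that at most two of $\{k, a_1, \ldots, a_r\}$ are $\geq D$ is trivially satisfied. With these choices, the box conditions reduce to $|k_j| \leq 1$ for $j \in \{1,3,5\}$ (with $k_2 = k_1 + k_3$ and $k_4 = k_1 + k_5$ determined), and the two resonance conditions collapse to
\begin{equation*}
|2 k_1 k_3| \leq T^{-1}, \qquad |2 k_1 k_5| \leq T^{-1}.
\end{equation*}

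The main step is then to restrict to the small window $|k_1| \leq (2T)^{-1}$. For any $k_1$ in this window and any $k_3, k_5 \in \Z_L \cap [-1,1]$, both resonance inequalities hold automatically. The set $\Z_L \cap [-(2T)^{-1}, (2T)^{-1}]$ contains $\gtrsim L T^{-1}$ lattice points (using $T \ll L$, the relevant regime), and for each such $k_1$ there are $\gtrsim L$ admissible values of $k_3$ and of $k_5$ independently; the remaining box constraints $|k_2 - a_2|, |k_4 - a_4| \leq 1$ shave at most $O(T^{-1})$ off these ranges. Multiplying and using $T \gg 1$,
\begin{equation*}
\mathfrak C \;\gtrsim\; \frac{L}{T} \cdot L \cdot L \;=\; L^3 T^{-1} \;\gg\; L^3 T^{-3/2}.
\end{equation*}

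The argument is essentially elementary; the only care needed is the routine verification of the lattice density bounds and compatibility of the box constraints within the chosen window. Heuristically, the example illustrates why the higher-dimensional combined $5$-vector estimate cannot extend to $d=1$: the scalar $k_1$ can be tuned so close to the simultaneous resonant value $k^{(1)} = k^{(2)}$ that both resonance conditions degenerate at once, whereas in $d \geq 3$ the vector $k_1$ has transverse directions that prevent this coincidence.
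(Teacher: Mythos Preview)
Your approach is the same as the paper's: choose $k^{(1)}=k^{(2)}$, $\beta^{(1)}=\beta^{(2)}=0$, $a_i=k^{(j)}$, and exploit the one-dimensional NLS factorization $\Omega = -2(k_1-k)(k_3-k)$ so that pinning $k_1$ near $k$ kills both resonance constraints at once. The paper does this for a general $k$ and passes to integer variables $X=L(k-k_1)$, $Y=L(k-k_3)$, $Z=L(k-k_5)$; your choice $k=0$ is just a specialization.

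There is, however, a gap in your range of $T$. You write ``using $T\ll L$, the relevant regime'' to guarantee that $\Z_L\cap[-(2T)^{-1},(2T)^{-1}]$ has $\gtrsim LT^{-1}$ points, but for $\sigma=2$ this is not the full relevant range: Theorem~\ref{mainthm} already allows $T$ up to $L^{5/4-\epsilon}$, and the whole point of the proposition (see Section~3.2.3) is to rule out the combined five-vector estimate up to $T_{\mathrm{kin}}\sim L^{2\gamma}$, which for $\gamma$ near $1$ is close to $L^2$. When $T\gtrsim L$ your window for $k_1$ contains only $k_1=0$, and the count you wrote down degenerates. The paper handles the case $T\ge L$ separately: fix $X=1$ (i.e.\ $k_1=k-L^{-1}$) and count $|Y|,|Z|\le L^2T^{-1}$, giving $\mathfrak C\gtrsim L^4T^{-2}\gg L^3T^{-3/2}$ since $T\ll L^2$. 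Alternatively, in your setup you can simply take $k_1=0$ and note $\mathfrak C\gtrsim L^2\gg L^3T^{-3/2}$ once $T\gg L^{2/3}$; either patch closes the gap.
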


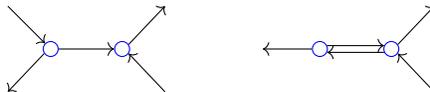
\begin{figure}[h]
\begin{tikzpicture}[scale = .95]
\tikzstyle{hollow node}=[circle,draw,inner sep=2]
\tikzset{edge/.style = {->,> = latex'}}
\hspace{-1cm}
\node[hollow node, blue] (C) at (-7,0) {};
\node[hollow node, blue] (D) at (-6,0) {};

\draw[->] (C.0) -> (D.180);

\draw[->] (C.215) to (-7.6,-.6);
\draw[->] (-7.6,.6) to (C.135);

\draw[->] (D.35) to (-5.4,.6);
\draw[->] (-5.4,-.6) to (D.325) ;
\end{tikzpicture}
~\hspace{1cm}
\begin{tikzpicture}[scale = .95]
\tikzstyle{hollow node}=[circle,draw,inner sep=2]
\tikzset{edge/.style = {->,> = latex'}}
\node[hollow node, blue] (A) at (-6,0) {};
\node[hollow node, blue] (B) at (-5,0) {};

\draw[->] (A.25) -> (B.155);
\draw[->] (B.205) -> (A.335);

\draw[->] (A.180) to (-6.8,0);

\draw[->] (B.35) to (-4.4,.6);
\draw[->] (-4.4,-.6) to (B.325);
\end{tikzpicture}
\caption{Atomic groups for five-vector counting.}
\label{fig-5vc}
\end{figure}

\subsubsection{The operator $\Ell$} Unlike the counting estimates in \cite{2019, WKE, WKE2023}, those in Proposition \ref{prop-vc} involve a parameter $D$, related to the size of the decorations. Due to the decay of $n_{\mathrm{in}}$, we typically may restrict the wave numbers to be $\lesssim L^{\theta}$. However, in the case of the remainder, when bounding $\Ell^n$, we fix a leaf to have decoration $\ell$ that can be arbitrarily large. This means that any nodes of the couple connecting the root to this leaf, which we refer to as the \textit{stem} (Definition \ref{def-flower}), can have arbitrarily large decoration. As each node on the stem has only one of its children along the stem, we need to allow for the possibility that two of the decorations can be large. Our improved two-vector counting estimates further require us to count at least one decoration off of the stem. To ensure that our algorithm for couples can be used for flower couples, we only use these improved estimates in the case of \textit{double bonds} (defined in Proposition \ref{prop-db}), which are guaranteed to not lie fully along the stem. Luckily, this still allows us to achieve the bound (\ref{eq-rough-bound}) on two-vector countings for both types of couples.

\subsection{Plan of the paper} In Section \ref{sec-counting} we prove the counting estimates stated in Propositions \ref{prop-vc} and \ref{prop-5vc}, as well as prove a related estimate on the iterates of (\ref{WKE}). In Section \ref{sec-splice}, we introduce relevant structures in couples and molecules, and demonstrate how to remove irregular chains via cancellation. In Section \ref{sec-red}, we reduce Proposition \ref{prop-couples} to Proposition \ref{prop-rigidity}, a bound on the number of decorations of molecules. In Section \ref{sec-algorithm} we lay out an algorithm to count decorations of molecules, reducing Proposition \ref{prop-rigidity} to Proposition \ref{prop-2vc-bound}, a bound on the number of two-vector countings. In Section \ref{sec-opbound}, we prove Proposition \ref{prop-2vc-bound} by developing a mapping of two-vector countings to three-vector countings. In Section \ref{sec-opL}, we control the kernels of $\Ell^n$ to prove Proposition \ref{prop-remainder}. Finally, in Section \ref{sec-thm}, we prove Theorems \ref{mainthm} and \ref{mmtthm}. 

\section{Counting estimates}
\label{sec-counting}
Here, we start with the proof of Proposition \ref{prop-vc} through Lemmas \ref{lem-sg2vc}-\ref{lem-3vc} when $\sigma \neq 2$. The case of $\sigma = 2$ is established in \cite{WKE2023}. We also prove a related estimate on iterates in Corollary \ref{cor-iterates}. Finally, we prove Proposition \ref{prop-5vc}.

\begin{lemma}{(Uni-directional 2 v.c.)}\label{lem-sg2vc}
Consider the counting problem corresponding to the tuple $(1^+, 2^+)$ with one of $a_1, a_2 \leq D$. Then 
\begin{equation}
\mathfrak C \lesssim LT^{-\frac{1}{2}}D.
\end{equation}
\end{lemma}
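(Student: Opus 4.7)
The tuple $(1^+, 2^+)$ encodes pairs $k_1, k_2 \in \Z_L$ with $|k_j - a_j| \leq 1$, $k_1 + k_2 = k$, and $\bigl||k_1|^\sigma + |k_2|^\sigma - \beta\bigr| \leq T^{-1}$. Using the momentum constraint to eliminate $k_2 = k - k_1$, I reduce the problem to counting $k_1 \in \Z_L \cap I$ with $|g(k_1) - \beta| \leq T^{-1}$, where
\begin{equation*}
I := [a_1 - 1, a_1 + 1], \qquad g(x) := |x|^\sigma + |k-x|^\sigma.
\end{equation*}
Without loss of generality $a_1$ is the index bounded by $D$, so $|x| \leq D + 1$ throughout $I$. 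The plan is to control the Lebesgue measure of the sublevel set $E := \{x \in I : |g(x) - \beta| \leq T^{-1}\}$ by a standard second-derivative sublevel set estimate, and then pass from measure to lattice count.

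I first split $I$ at the non-smooth points $x = 0$ and $x = k$ (if either lies in $I$), producing at most three subintervals on each of which $g \in C^2$. A direct computation gives
\begin{equation*}
g''(x) = \sigma(\sigma-1)\bigl(|x|^{\sigma-2} + |k-x|^{\sigma-2}\bigr),
\end{equation*}
with $\sigma(\sigma-1) \neq 0$ since $\sigma \in (0,2]\setminus\{1\}$. Because $\sigma - 2 \leq 0$, the map $y \mapsto y^{\sigma-2}$ is nonincreasing on $(0,\infty)$, and $|x| \leq D+1$ yields $|g''(x)| \gtrsim_\sigma D^{\sigma-2}$ uniformly on each smooth piece. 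The standard second-derivative sublevel set estimate, which applies on any interval where $g \in C^2$ with $|g''|$ bounded below (and crucially does not require $g'$ to be nonzero), then produces
\begin{equation*}
|E| \lesssim \bigl(T^{-1}\cdot D^{2-\sigma}\bigr)^{1/2} = T^{-1/2} D^{(2-\sigma)/2}.
\end{equation*}

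Since $\sigma > 0$ gives $(2-\sigma)/2 \leq 1$, and $D \gtrsim 1$ by assumption, we have $D^{(2-\sigma)/2} \lesssim D$, hence $|E| \lesssim T^{-1/2} D$. Because $E$ is a union of $O(1)$ intervals (at most one per smooth piece, after accounting for the unique interior critical point $x = k/2$), the number of $\Z_L$-lattice points in $E$ satisfies
\begin{equation*}
\mathfrak C \lesssim L \cdot |E| + O(1) \lesssim L T^{-1/2} D,
\end{equation*}
as claimed (the additive $O(1)$ is absorbed in the trivial regime $LT^{-1/2}D \lesssim 1$). The main, and essentially only, subtlety is the non-smoothness of $g$ at $x = 0$ and $x = k$; this is cleanly handled by the subdivision above, after which the interior critical point of $g$ creates no difficulty because the sublevel set estimate is a genuinely second-order statement.
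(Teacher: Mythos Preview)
Your proof is correct and takes a genuinely different route from the paper's. The paper argues directly in the integer variable $X = Lk_1$: fixing a solution $X$, it shows by hand that $F(X+\nu) = |X+\nu|^\sigma + |Lk-X-\nu|^\sigma$ moves by at least $L^\sigma T^{-1}$ once $|\nu| \gtrsim LT^{-1/2}D$, via a case split on the relative signs of $X$ and $Lk-X$ and mean-value-theorem estimates. You instead invoke the standard second-derivative sublevel set inequality after observing that $|g''(x)| \gtrsim D^{\sigma-2}$ uniformly on $I$. This is cleaner and in fact yields the sharper exponent $D^{(2-\sigma)/2}$ before you relax to $D$; the paper's case analysis is more explicit but lands only on $D$. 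Your handling of the endpoints ($O(1)$ lattice points absorbed because $T \ll L$ and $D \gtrsim 1$ force $LT^{-1/2}D \gg 1$) and of the non-smooth points of $g$ is fine. One small wording issue: your parenthetical ``at most one per smooth piece'' undercounts slightly, since each smooth piece can contribute two monotone subintervals; but the conclusion that $E$ is a union of $O(1)$ intervals is of course still correct.
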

\begin{proof}
Let $X = Lk_1$ and note that we are looking for integers $X$ such that the function $F(X) = |X|^\sigma + |Lk-X|^\sigma$ satisfies 
\begin{equation}\label{eq-Fx}
F(X) = L^\sigma \beta + O(L^\sigma T^{-1}).
\end{equation}
Note that we may split the solutions $X$ into $O(1)$ intervals such that each of $X$ and $Lk - X$ do not change sign throughout the interval. So, it suffices to show that each of these intervals has length $\lesssim LT^{-{\frac{1}{2}}}D$. Suppose $X$ lies in one of these intervals and that $LD \gtrsim |\nu| \gtrsim LT^{-\frac{1}{2}}D$. We aim to show $X + \nu$ can no longer lie in this interval, where we may assume that $\nu$ and $X$ have the same sign. We consider the relative signs of $X$ and $Lk - X$.  
\begin{enumerate}
\item If $\mathrm{sgn}(X) = -\mathrm{sgn}(Lk - X)$, note that the signs of $|X + \nu| - |X|$ and ${|Lk - X - \nu| - |Lk - X|}$ are the same. Due to symmetry, we may assume that $|X| \lesssim LD$, and therefore it is enough to observe by the mean value theorem 
\begin{align*}
|X + \epsilon|^\sigma - |X|^\sigma &\gtrsim |\nu| \min (|X|^{\sigma - 1}, |\nu|^{\sigma - 1}) \\
&\gtrsim L^\sigma T^{-\frac{1}{2}}D^\sigma \min(1, T^{\frac{1-\sigma}{2}}) \\
&\gtrsim L^\sigma T^{-1}.
\end{align*}
\item If $\mathrm{sgn}(X) = \mathrm{sgn}(Lk -X )$, due to symmetry, we may assume that $|X| \geq |Lk - X|$ and $LT^{-\frac{1}{2}}D \lesssim |Lk - X| \lesssim LD$. Therefore, 
\begin{align*}
|X + \nu|^\sigma - |X|^\sigma + |Lk - X - \nu|^\sigma - |Lk - X|^\sigma & \gtrsim|Lk - X + \nu|^\sigma + |Lk - X - \nu|^\sigma - 2|Lk - X|^\sigma \\
& \gtrsim \nu^2 \min(|LD|^{\sigma - 2}, |\nu|^{\sigma - 2}) \\
& \gtrsim L^\sigma T^{-1}. 
\end{align*}
\end{enumerate}
\end{proof}

\begin{lemma}{(Large gap 2 v.c.)} \label{lem-lg2vc}Consider the counting problems corresponding to the tuples $(1^+, 2^-)$ and $(1^+, 2^+)$ for $\sigma \neq 2$, setting $h = \min(1, |k|)$. If $h \gtrsim T^{-1}$ and at least one of $a_1, a_2 \leq D$, then
\begin{equation}
\mathfrak C \lesssim LT^{-1}h^{-1} D^{2 - \sigma}. 
\end{equation}
\end{lemma}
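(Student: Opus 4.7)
The plan follows the template of Lemma \ref{lem-sg2vc}. WLOG assume $|a_2|\le D$, and rescale to the integer variable $X=Lk_2$, which lies in an interval of length $\sim 2L$ with $|X|\lesssim LD$. Using the momentum constraint $k_1=k_2\pm k$ to eliminate $k_1$, the resonance condition reduces to $|F(X)-L^\sigma\beta|\le L^\sigma T^{-1}$ for the scalar function $F(X) = |X+Lk|^\sigma - |X|^\sigma$ in the $(1^+,2^-)$ case and $F(X) = |X|^\sigma + |Lk-X|^\sigma$ in the $(1^+,2^+)$ case. Partition the range of $X$ into $O(1)$ subintervals on which both $X$ and $X\pm Lk$ have constant sign, so $F$ is smooth on each. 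The elementary pigeonhole estimate gives, on each such piece,
\[
\mathfrak{C}_{\mathrm{piece}} \;\le\; \frac{L^\sigma/T}{\inf |F'|} + 1,
\]
so it is enough to prove the pointwise lower bound $|F'(X)|\gtrsim L^{\sigma-1} h D^{\sigma-2}$ on every piece; the hypothesis $h\gtrsim T^{-1}$ together with $T\ll L$ then absorbs the $+1$ into the main term.

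The derivative analysis splits into two subcases, depending on whether the two terms making up $F'$ combine with opposite or equal signs. On the \emph{same-sign} piece of $(1^+,2^-)$ and the \emph{opposite-sign} piece of $(1^+,2^+)$, $F'(X)$ is a difference of two powers $\sigma|\cdot|^{\sigma-1}$ evaluated at $|X_1|$ and $|X_2|$, and an application of the mean value theorem to $t\mapsto t^{\sigma-1}$ gives
\[
|F'(X)| \;=\; \sigma|\sigma-1|\cdot L|k|\cdot \xi^{\sigma-2}
\]
for some $\xi\in[\min(|X_1|,|X_2|),\max(|X_1|,|X_2|)]$. Since $\sigma<2$, $\xi^{\sigma-2}$ is decreasing in $\xi$, so one upper-bounds $\xi\le L\max(D,|k|)$; this holds either because both $|k_1|,|k_2|\lesssim D$, or because the hypothesis ``at most two of $\{k,a_1,a_2\}$ are $\geq D$'' combined with the momentum relation forces $|k|$ to be comparable to the large $|a_i|$ whenever one of $a_1,a_2$ exceeds $D$. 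Separating the regimes $|k|\le D$ and $|k|>D$ then yields $|k|\cdot \max(D,|k|)^{\sigma-2}\gtrsim h D^{\sigma-2}$, giving the required estimate.

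On the complementary pieces, $F'$ is instead a \emph{sum} of two positive terms of the form $\sigma|X_i|^{\sigma-1}$. If $\sigma<1$, one has $|F'|\ge \sigma|X_2|^{\sigma-1}\gtrsim (LD)^{\sigma-1}$, which already implies the claimed bound. If $\sigma>1$, one uses the identity $|X_1|+|X_2|=L|k|$ (forced by the opposite-sign configuration in $(1^+,2^-)$, and analogously for same-sign $(1^+,2^+)$) together with the monotonicity of $t\mapsto t^{\sigma-1}$ to obtain $|F'|\gtrsim (L|k|)^{\sigma-1}$, again producing the desired lower bound after a short case split on $|k|$ versus $D$.

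The main obstacle is the regime where one of $a_1,a_2$ exceeds $D$, so that the associated $|X_i|$ can be as large as $L|k|$ rather than $LD$. Here a naive upper bound on $\xi$ in the mean value theorem would yield too weak a derivative estimate; the resolution is that in this regime the momentum constraint forces $|k|\sim |a_i|$, and the compensating gain in the prefactor $L|k|$ of $|F'|$ exactly offsets the loss in the $\xi^{\sigma-2}$ factor. A further care is needed to track the change in monotonicity of $t\mapsto t^{\sigma-1}$ as $\sigma$ crosses $1$, since this alters which endpoint of $\xi$ delivers the optimal bound in the same-sign subcase.
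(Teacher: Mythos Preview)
Your approach is genuinely different from the paper's: you use an elementary pigeonhole/derivative argument, whereas the paper passes through Poisson summation and a change of variables to reduce the count to an integral with an explicit Jacobian. Both routes ultimately hinge on lower-bounding $|F'|$ (equivalently, upper-bounding the paper's $A=|\Omega'|^{-1}$), so the difference is largely one of packaging; your route is more direct and avoids the Fourier machinery.

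However, there is a genuine gap in your derivative bound on the ``difference'' piece. The claim that $|k|\cdot\max(D,|k|)^{\sigma-2}\gtrsim hD^{\sigma-2}$ fails when $0<\sigma<1$ and $|k|\gg D$: then $h=1$ and the left side equals $|k|^{\sigma-1}$, which tends to $0$ as $|k|\to\infty$, while the right side is the fixed quantity $D^{\sigma-2}$. The mean value theorem applied directly to $t\mapsto t^{\sigma-1}$ gives only $|F'|\gtrsim L|k|\cdot\max(|X_1|,|X_2|)^{\sigma-2}$, and since $\max(|X_1|,|X_2|)\sim L|k|$ in this regime, this is too weak. The fix is to exploit that $\min(|X_1|,|X_2|)\lesssim LD$ (from your assumption $|a_2|\le D$): for $\sigma<1$ one has $\bigl||X_1|^{\sigma-1}-|X_2|^{\sigma-1}\bigr|\gtrsim \min(|X_1|,|X_2|)^{\sigma-1}$ once $|X_1|/|X_2|$ is bounded away from $1$, which recovers the needed $(LD)^{\sigma-1}\ge L^{\sigma-1}D^{\sigma-2}$. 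Equivalently, write $a^{\sigma-1}-b^{\sigma-1}=(b^{1-\sigma}-a^{1-\sigma})/(ab)^{1-\sigma}$ and apply the mean value theorem to the numerator; this is precisely how the paper obtains its Jacobian bound $A\lesssim |u|^{1-\sigma}|u-k|^{1-\sigma}\max(|u|,|u-k|)^\sigma/|k|$ for $0<\sigma<1$. Your closing remark about monotonicity changing at $\sigma=1$ gestures at this issue but does not carry out the correction.

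A minor point: your identification of which sign-configurations of $(1^+,2^+)$ give a ``difference'' versus a ``sum'' is swapped (same-sign yields the difference, opposite-sign the sum), though the subsequent estimates survive since in either sum-case configuration one still has $\max(|X_1|,|X_2|)\gtrsim L|k|$.
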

\begin{proof}
Throughout we assume that $|a_1 \pm a_2 - k| \leq 4$, depending on the tuple, as if not, we trivially have $\mathfrak C = 0$. It is enough to determine $k_1$ as this fixes $k_2$. So, let $W$ be a bump function localizing $k_1$ near $a_1$. Note also that if any $|k_i| \lesssim T^{-1}$ , we automatically achieve the desired bound, so we may assume that all $|k_1|, |k_2| \gtrsim T^{-1} $ and reflect this in $W$. Namely, if $\chi$ is a function of sufficient decay supported in a ball of radius $O(1)$, we choose
\begin{equation}
W(u) = \chi(u - a_1) \chi(u - k \pm a_2) (1 - \chi(T u))(1 - \chi(T(u - k))). 
\end{equation}
Additionally, let $\psi \geq 0$ be a bump function localized around $\beta$ such that $\psi \geq 1$ on $B(\beta, 1)$ and $\hat{\psi}$ is supported on a ball of radius $O(1)$. Let
\begin{equation}
\Omega(u) = |u|^\sigma \pm |u - k|^\sigma,
\end{equation} 
so that 
\begin{equation*}
\nabla \Omega (u) = \sigma (\mathrm{\mathrm{sgn}}(u)|u|^{\sigma - 1} \pm \mathrm{\mathrm{sgn}}(u - k)|u - k|^{\sigma - 1}). 
\end{equation*}
It is enough to estimate
\begin{align}
\sum_{k_1 \in \Z_L} W(k_1) \psi(T\Omega(k_1)) &= \sum_{k_1 \in \Z_L} W(k_1) T^{-1}\int_\tau\hat{\psi}\left( \frac{\tau}{T}\right) e(\tau \Omega(k_1)) \diff \tau\label{eq-psift} \\
&=LT^{-1}\sum_{f \in \Z} \int_\tau \hat{\psi}\left( \frac{\tau}{T}\right) \int_u W(u) e(\tau \Omega(u) - Luf)\diff u \diff \tau,\label{eq-ps}
\end{align}
where we are rewriting $\psi$ in terms of its time Fourier transform in (\ref{eq-psift}) and then using Poisson Summation in (\ref{eq-ps}). Letting $\Phi(u) = \tau \Omega(u) - Luf$, note that $\nabla \Omega$ is localized in a ball of radius at most $O(1)$ for $1 < \sigma < 2$ and in a ball of radius at most $O(T^{1 - \sigma})$ for $0 < \sigma < 1$. Therefore, as $\tau \lesssim T$ and using our choice of $T$, for all but $O(1)$ values of $f$, we have 
\begin{equation*}
\left| \nabla \Phi(u) \right| \gtrsim L|f|.
\end{equation*}
For the remaining terms, we may integrate by parts sufficiently many times, noting that we may also lose a factor of $T$ at each step. However, since $\frac{T}{L} \lesssim L^{-\epsilon}$, we may integrate at least $M$ times, where $\left( \frac{T}{L}\right)^M \ll T^{-1}$. 

For the $O(1)$ terms, we rewrite them by reversing the Fourier Transform in $\psi$ and absorbing the phase into $W$ via $\widetilde W$ as: 
\begin{equation} \label{eq-O1terms}
L\int_u \widetilde{W}(u)\psi(T\Omega(u)) \diff u.
\end{equation}
Performing a change of variables by replacing $u$ with $s = \Omega(u)$, (\ref{eq-O1terms}) becomes
\begin{equation}\label{eq-O1integral}
L \int_s \widetilde{W}(u(s))\psi(Ts) \frac{1}{\sigma} \frac{\diff s}{\mathrm{\mathrm{sgn}}(u)|u|^{\sigma - 1} \pm \mathrm{\mathrm{sgn}}(u - k)|u-k|^{\sigma - 1}}.
\end{equation}
Note that by the mean value theorem, 
\begin{equation}
A:= \left| \frac{1}{\mathrm{sgn}(u)|u|^{\sigma - 1} \pm \mathrm{sgn}(u - k)|u-k|^{\sigma - 1}}\right| \lesssim \begin{cases} \frac{|u|^{1 - \sigma}|u-k|^{1 - \sigma}\max(|u|, |u-k|)^{\sigma}}{|k|} & 0 < \sigma < 1, \\ \frac{\max(|u|, |u-k|)^{2 - \sigma}}{|k|}& 1 < \sigma < 2.\end{cases} 
\end{equation}
Now we consider several values of $k$:
\begin{enumerate}
\item If $|k|\leq 10D$, $A \lesssim \frac{D^{2 - \sigma}}{h}$. 
\item If $|k| \geq 10D$, $A \lesssim D^{2 - \sigma} $.
\end{enumerate}
Therefore, when integrating in $s$, we may bound (\ref{eq-O1integral}) by $LT^{-1}h^{-1} D^{2-\sigma}$.
\end{proof}

\begin{lemma}{(3 v.c.)}\label{lem-3vc}
Consider the counting problem corresponding to the tuple $(1^+, 2^-, 3^+)$ for $\sigma \neq 2$. If at least two of $k, a_1, a_2, a_3 \leq D$, then
\begin{equation}
\mathfrak C \lesssim L^2T^{-1}\log L D^{2 - \sigma}. 
\end{equation}
\end{lemma}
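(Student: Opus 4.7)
The plan is to reduce the three-vector counting to a two-vector counting by fixing one variable and then applying Lemma \ref{lem-lg2vc}. The hypothesis that at least two of $k, a_1, a_2, a_3$ lie in $[0,D]$ forces at least one of $a_1, a_2, a_3$ to lie in $[0,D]$ (otherwise only $k$ could be small). By the obvious symmetry in the $+$-signed indices of the tuple $(1^+, 2^-, 3^+)$, I may relabel to assume $a_1 \leq D$.

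I then fix $k_3$, which ranges over $\lesssim L$ values in $\Z_L$ with $|k_3 - a_3| \leq 1$. With $k_3$ fixed, the linear and resonance constraints on $(k_1, k_2)$ read $k_1 - k_2 = k - k_3$ and $\big||k_1|^\sigma - |k_2|^\sigma - (\beta - |k_3|^\sigma)\big| \leq T^{-1}$. This is precisely the counting problem for the tuple $(1^+, 2^-)$ with new parameters $k' := k - k_3$ and $\beta' := \beta - |k_3|^\sigma$. Since $a_1 \leq D$, Lemma \ref{lem-lg2vc} applies when the new gap $h := \min(1, |k - k_3|)$ satisfies $h \gtrsim T^{-1}$, giving a two-vector bound $\lesssim L T^{-1} h^{-1} D^{2-\sigma}$; in the remaining regime $h \lesssim T^{-1}$ I fall back on the trivial $\lesssim L$ bound from Proposition \ref{prop-vc}(1).

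The remaining step is a direct summation over $k_3$, split by the size of $|k - k_3|$. If $|k - k_3| \leq T^{-1}$, at most $\lesssim L T^{-1}$ lattice points $k_3 \in \Z_L$ lie in such a window, each contributing $\lesssim L$, for a total of $L^2 T^{-1}$. If $T^{-1} < |k - k_3| \leq 1$, the values $|k - k_3|$ take the form $j/L$ with $L T^{-1} \lesssim j \lesssim L$, and
\[
\sum_j L T^{-1} \left(\tfrac{j}{L}\right)^{-1} D^{2-\sigma} = \sum_j L^2 T^{-1} j^{-1} D^{2-\sigma} \lesssim L^2 T^{-1} \log L \cdot D^{2-\sigma},
\]
which is the source of the logarithmic factor in the statement. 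Finally, the $|k - k_3| > 1$ regime has $h = 1$ and contributes $L \cdot L T^{-1} D^{2-\sigma}$. Summing the three contributions yields $\mathfrak C \lesssim L^2 T^{-1} \log L \cdot D^{2-\sigma}$.

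I do not expect any serious obstacle: Lemma \ref{lem-lg2vc} does the heavy lifting, and the only genuine care needed is to isolate the regime $h \lesssim T^{-1}$ (where Lemma \ref{lem-lg2vc} is silent) and handle it with the trivial two-vector estimate, and to account correctly for the $L^{-1}$-lattice spacing when summing $1/|k - k_3|$, which is precisely what produces the $\log L$ loss.
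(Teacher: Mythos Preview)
Your approach is correct and genuinely different from the paper's. The paper proves the three-vector bound from scratch via Poisson summation on the two-variable sum over $(k_1,k_3)$: it writes the sum as an oscillatory integral, dispatches the nonzero-frequency terms by integration by parts (exploiting $T \ll L$), and for the remaining $O(1)$ terms changes variables $u_3 \mapsto s = \Omega(u_1,u_3)$; the $\log L$ then arises from integrating the Jacobian factor $\sim |u_1 - k|^{-1}$ over the $u_1$-window. Your argument instead freezes $k_3$ and invokes the already-established two-vector Lemma~\ref{lem-lg2vc} as a black box, recovering the same $\log L$ from the harmonic sum $\sum_j j^{-1}$ over the lattice gap $|k-k_3|$. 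Your route is shorter and more modular once Lemma~\ref{lem-lg2vc} is in hand; the paper's route is self-contained and makes the role of the near-singularity $u_1 \approx k$ on the resonance surface more transparent.

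One small imprecision: your relabeling ``assume $a_1 \leq D$'' via the $1\leftrightarrow 3$ symmetry fails in the case $a_1,a_3 > D$ with $a_2,k \leq D$. This does not actually break the argument, since Lemma~\ref{lem-lg2vc} only requires \emph{one} of $a_1,a_2$ to be $\leq D$, and $a_2 \leq D$ already suffices there; just rephrase the reduction as ``after possibly swapping $1\leftrightarrow 3$, at least one of $a_1,a_2$ lies in $[0,D]$,'' which is what the hypothesis genuinely gives you.
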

\begin{proof}
Throughout we assume that $|a_1 - a_2 + a_3 - k| \leq 6$, as if it is not, then trivially we have that $\mathfrak C = 0$.  It is enough to determine $(k_1, k_3)$ as this fixes $k_2$. Similar to Lemma \ref{lem-lg2vc}, we may also assume that $|k_i| \gtrsim T^{-1}$ and $|k - k_i| \gtrsim T^{-1}$. So, we choose
\begin{align}
W(u_1, u_3) = &\chi(u_1 - a_1) \chi(u_3-a_3) \chi(u_1 + u_3 - k - a_2) \nonumber\\
& \times (1 - \chi(Tu_1))(1 - \chi(Tu_3))(1 - \chi(T(u_1 + u_3 - k))) \\
& \times (1 - \chi(T(u_1-k)))(1 - \chi(T(u_3-k)))(1 - \chi(T(u_1+k_3 - 2k))). \nonumber
\end{align}
Let 
\begin{equation}\label{eq-Omega-13}
\Omega(u_1, u_3) = |u_1|^\sigma - |u_1 + u_3 - k|^\sigma + |u_3|^\sigma - |k|^\sigma,
\end{equation}
so that 
\begin{align*}
\nabla \Omega(u_1, u_3) = \sigma (&\mathrm{sgn}(u_1)|u_1|^{\sigma - 1} - \mathrm{sgn}(u_1 + u_3 - k)|u_1 + u_3 - k|^{\sigma - 1} , \\
&\mathrm{sgn}(u_3)|u_3|^{\sigma - 1} - \mathrm{sgn}(u_1 + u_3 - k)|u_1 + u_3 - k|^{\sigma - 1}).
\end{align*}
It is enough to estimate, where $\psi$ is as in Lemma \ref{lem-lg2vc},
\begin{align}\label{eq-sum-to-int}
\sum\limits_{k_1, k_3 \in \Z_L}&W(k_1, k_3) \psi(T\Omega(k_1, k_3)) \nonumber  \\
&=L^2T^{-1} \sum\limits_{f_1, f_3 \in \Z} \int_\tau\hat{\psi}\left(\frac{\tau}{T}\right) \int_{u_1, u_3} W(u_1, u_3) e(\tau \Omega(u_1, u_3) - Lu\cdot f) \diff u \diff \tau. 
\end{align}
Letting $\Phi(u) = \tau \Omega(u) - Lu\cdot f$,
note that as in Lemma \ref{lem-lg2vc}, $\nabla \Omega$ is localized in a ball of radius at most $O(1)$ for $1 < \sigma < 2$ and in a ball of radius at most $O(T^{1-\sigma})$ for $0 < \sigma < 1$. So, again we have that for all but $O(1)$ values of $(f_1, f_3)$,
\begin{equation}\label{eq-nabla-bound}
\nabla_{u_j} \Phi (u) \gtrsim L|f_j|.
\end{equation}
These terms we may treat as in Lemma \ref{lem-lg2vc} so that collectively, they are $\ll T^{-1}$. For the $O(1)$ terms, we may rewrite them by reversing the Fourier Transform in $\psi$ and absorbing the phase into $W$ via $\widetilde{W}$ as:
\begin{equation}\label{O1terms}
L^2\int_{u_1, u_3} \widetilde{W}(u_1, u_3) \psi(T\Omega(u)) \diff u.
\end{equation}
Performing a change of variables replacing $u_3$ by $s = \Omega(u_1, u_3)$, (\ref{O1terms}) becomes
\begin{equation*}
L^2\int_s \int_{u_1} \widetilde{W}(u_1, u_3(s, u_1)) \psi(Ts) \frac{1}{\sigma}\frac{\diff u_1 \diff s}{\mathrm{sgn}(u_3)|u_3|^{\sigma - 1} + \mathrm{sgn}(u_1 + u_3 - k)|u_1 + u_3 - k|^{\sigma - 1}}.
\end{equation*}
We bound this integral by first integrating in $u_1$, where we lose at most $\log L$, independent of $s$ and then we integrate in $s$ and gain a factor of $T^{-1}$. To see the $\log L$ bound when integrating in $u_1$, set 
\begin{align*}
A &:= \left|\frac{1}{\mathrm{sgn}(u_3) |u_3|^{\sigma - 1} + \mathrm{sgn}(u_1 + u_3 - k)|u_1 + u_3 - k|^{\sigma - 1}}\right| \\
&\lesssim \begin{cases} \frac{|u_3|^{1-\sigma} |u_1 + u_3 - k|^{1-\sigma}\max(|u_3|, |u_1 + u_3 - k|)^\sigma}{|u_1 - k|} & 0 < \sigma < 1 \\ \frac{\max(|u_3|, |u_1 + u_3 - k|)^{2 - \sigma}}{|u_1 - k|} & 1 < \sigma < 2.\end{cases}
\end{align*}
Now, we consider various values of $(a_2, a_3)$: 
\begin{enumerate}
\item If $a_2, a_3 \leq D$, $A \lesssim \frac{D^{2 - \sigma}}{|u_1 - k|}$, which is integrable in $u_1$ with at most $\log L$ losses as $|u_1 - k| \gtrsim T^{-1}$. 
\item If at most one $\geq D$, suppose $a_3$, then either $a_3 \leq 10D$, which reduces to the above case or $a_3 \geq 10D$, in which case $|u_1 - k| \gtrsim |a_3|$ and $A \lesssim  D^{2 - \sigma} $. 
\item If both $|a_2|, |a_3| \geq D$, we could replace $u_1$ by $s$ rather than $u_3$. 
\end{enumerate}
\end{proof}

\begin{corollary}{(Convergence of Iterates)}\label{cor-iterates}
Fix $k \in \Z_L$ and let $W \in \mathcal{S}(\R)$ and $\psi \in \mathcal{S}(\R)$ with $\hat{\psi}$ supported in a ball of radius $O(1)$. Then, there is $\delta \ll 1$ such that 
\begin{align}
 \sum_{k_1, k_3 \in \Z_L} &W(k_1, k_3)\psi(T\Omega(k_1, k_3)) = \nonumber\\ 
 &\sum_{\substack{(f_1, f_3) \in \Z^2 \\ |f_1|, |f_3| \lesssim R}}L^2\int_{u_1, u_3 \in \R} W(u_1, u_3)\psi(T\Omega(u_1, u_3))e(-Lu\cdot f)\diff u + O(L^{2 - \delta}T^{-1}), \label{eq-iterate-conv}
\end{align}
where $\Omega(k_1, k_3) = |k_1|^\sigma - |k - k_1 - k_3|^\sigma + |k_3|^\sigma - |k|^\sigma$ and $R = \max(T, T^{2 - \sigma})L^{-1 + 3\delta}$. 
\end{corollary}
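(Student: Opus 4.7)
The statement is a quantitative Poisson summation for the lattice sum on the LHS: the RHS records the ``dual'' terms of frequency $|f|\lesssim R$ explicitly, while the remaining dual terms are collected into the $O(L^{2-\delta}T^{-1})$ error. My plan is to follow the two-step Poisson plus integration-by-parts scheme already used in the proof of Lemma \ref{lem-3vc} (see the passage from (\ref{eq-sum-to-int}) to (\ref{O1terms})) and make it quantitative, with $|f|$ now allowed to range over all integer vectors of size $\gtrsim R$ rather than being restricted to the $O(1)$ non-IBP terms.

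The first move is to apply Poisson summation directly to $F(k_1,k_3):=W(k_1,k_3)\psi(T\Omega(k_1,k_3))$ on the rescaled lattice $\Z_L^2=(L^{-1}\Z)^2$, obtaining the exact identity
\[
\sum_{k_1,k_3\in \Z_L} F(k_1,k_3) \;=\; L^2\sum_{(f_1,f_3)\in\Z^2}\int_{\R^2} F(u)\,e(-Lu\cdot f)\,\diff u.
\]
The stated RHS of (\ref{eq-iterate-conv}) is exactly the partial sum over $|f_j|\lesssim R$, so the corollary reduces to showing that the tail
\[
\mathcal{T} \;:=\; L^2\sum_{\max(|f_1|,|f_3|)\gtrsim R}\int_{\R^2} W(u)\psi(T\Omega(u))\,e(-Lu\cdot f)\,\diff u
\]
is $O(L^{2-\delta}T^{-1})$. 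To unlock oscillation I would replace $\psi(T\Omega)$ by its inverse Fourier integral $T^{-1}\int\hat\psi(\tau/T)e(\tau\Omega)\,\diff\tau$, turning each summand into an oscillatory integral with phase $\Phi(u;\tau,f):=\tau\Omega(u)-Lu\cdot f$; since $\hat\psi$ has compact support, only $|\tau|\lesssim T$ contributes.

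The next step is a uniform gradient bound. Smoothly partition $W=\rho W+(1-\rho)W$, where $\rho$ vanishes on the singular set $\{\min(|u_1|,|u_3|,|u_1+u_3-k|)\lesssim T^{-1}\}$ and equals $1$ outside a slightly larger neighborhood. On the bulk region, $|\nabla\Omega|\lesssim 1$ for $1<\sigma<2$ and $|\nabla\Omega|\lesssim T^{1-\sigma}$ for $0<\sigma<1$, so $|\tau\nabla\Omega|\lesssim\max(T,T^{2-\sigma})$. For $|f|\gtrsim R=\max(T,T^{2-\sigma})L^{-1+3\delta}$ this forces $L|f|\gg|\tau\nabla\Omega|$, hence $|\nabla_u\Phi|\gtrsim L|f|$, and $M$ successive integrations by parts in $u$ produce a factor of $(L|f|)^{-M}$ at the cost of Schwartz seminorms of $\rho W$. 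Summing the bulk contributions gives
\[
L^2T^{-1}\sum_{|f|\gtrsim R}(L|f|)^{-M}\;\lesssim\; L^{2-M}T^{-1}R^{2-M},
\]
which is $O(L^{2-\delta}T^{-1})$ once $M$ is taken large enough in terms of $1/\delta$: the factor $L^{3\delta}$ baked into $R$ supplies an overhead of $L^{3\delta(M-2)}$ that easily absorbs $L^{-\delta}$.

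The main obstacle is the piece $(1-\rho)W$, supported on a set of measure $O(T^{-1})$ on which $\nabla\Omega$ blows up when $0<\sigma<1$. Here I would again apply Poisson summation and integrate by parts in $u$, this time leveraging the $T$-scale smoothness of the cutoff: each derivative hitting $(1-\rho)$ costs a factor of $T$, but this is bought off by the factor $(L|f|)^{-1}$ per IBP and by the Schwartz decay of $\psi(T\Omega)$ that confines $\Omega$ to a window of size $T^{-1}$. The $L^{3\delta}$ buffer in $R$ is calibrated precisely so that the singular contribution also falls below $O(L^{2-\delta}T^{-1})$. Combining the bulk and singular estimates completes the proof.
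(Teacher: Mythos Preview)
Your overall strategy (Poisson summation followed by integration by parts on the tail) matches the paper's, but the order of operations differs in a way that creates a genuine gap. The paper does \emph{not} apply Poisson summation to $F=W\psi(T\Omega)$ directly. For $0<\sigma<1$ the function $\Omega$ is only $C^{\sigma}$ across the hyperplanes $u_1=0$, $u_3=0$, $u_1+u_3=k$, so $F$ is not smooth and the dual series need not converge absolutely. Instead the paper first replaces $W$ by a truncated weight localized to $|u_1|,|u_3|\lesssim L^{\delta}$ and (for $0<\sigma<1$) to $\min(|u_1|,|u_3|,|u_1+u_3-k|)\gtrsim (TL^{2\delta})^{-1}$, after checking that this truncation changes \emph{both} the lattice sum and the finite Poisson sum on the RHS of (\ref{eq-iterate-conv}) by $O(L^{2-\delta}T^{-1})$. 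Only then does it run Poisson and IBP on the now-smooth integrand.

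Your bulk argument is essentially right once that truncation is in place, but your treatment of the singular piece $(1-\rho)W$ does not work as written. After Fourier-expanding $\psi$ there is no ``Schwartz decay of $\psi(T\Omega)$'' left to invoke: it has been traded for the oscillation $e(\tau\Omega)$. Integration by parts on the support of $(1-\rho)$ then does not give a clean $(L|f|)^{-1}$ gain, because near the singular set $\nabla\Phi$ is dominated by $\tau\nabla\Omega$ (which blows up), and each IBP produces higher derivatives of $\Omega$ that blow up faster still (e.g.\ $\partial_{u_1}^2\Omega\sim |u_1|^{\sigma-2}$). The paper sidesteps all of this by bounding the singular-region contribution to each side of (\ref{eq-iterate-conv}) separately --- a direct small-measure / small-count argument rather than an oscillatory one. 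A secondary point: your bulk bound $|\nabla\Omega|\lesssim 1$ for $1<\sigma<2$ is false without first restricting to $|u|\lesssim L^{\delta}$, since $|\nabla\Omega|\sim |u|^{\sigma-1}$ at infinity; the paper makes this localization explicit via the decay of $W$.
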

\begin{proof}
Although the proof is similar to the proof of Lemma \ref{lem-3vc}, there are a few subtleties. First note that the inputs to $W$ may be arbitrarily large. However, for any $\delta > 0$, if $|k_1|, |k_3| \gtrsim L^{\delta}$, both terms are $\lesssim L^{2-\delta}T^{-1}$ by the decay of $W$. Similarly, for $0 < \sigma < 1$, if one of $|k_1|, |k - k_1 - k_3|, |k_3| \lesssim (L^{2\delta}T)^{-1}$, then again both terms are $\lesssim L^{2 - \delta}T^{-1}$, as $T^{2 - \sigma} \ll 1$. So, for $\chi$ a bump function supported in a ball of radius $O(1)$ around 0, we may replace $W$ with 
\begin{align}
W(k_1, k_3) = &\chi\left( \frac{k_1}{L^{1 + \delta}}\right)\chi\left( \frac{k_3}{L^{1 + \delta}}\right) \times \nonumber\\
&(1 - \chi((TL^{2\delta})k_1))(1 - \chi((TL^{2\delta})k_3))(1 - \chi((TL^{2\delta})(k - k_1 - k_3)))W(k_1, k_3),
\end{align}
where we omit the $1 - \chi(\cdot)$ terms if $\sigma > 1$. At this point, we may rewrite the sum as in (\ref{eq-sum-to-int}). Note that if $|f_1|, |f_3| \lesssim R$, we reverse the Fourier Transform in $\psi$ and obtain precisely the sum in $(f_1, f_3)$ in (\ref{eq-iterate-conv}). So, it remains to bound the remaining terms by $L^{2 - \delta}T^{-1}$. Note that similarly to above, $\nabla \Omega$ is in a ball of radius at most $L^{\delta(\sigma - 1)}$ if $1 < \sigma \leq 2$ and $(TL^{2\delta})^{1 - \sigma}$ if $0 < \sigma < 1$, where now these balls are centered at 0. So, as long as $\delta < \frac{\epsilon}{10}$, we again have (\ref{eq-nabla-bound}) whenever $|f_j| \gtrsim R$. As above, we integrate by parts sufficiently many times so that the sum of the remaining terms is $\lesssim L^{2 - \delta}T^{-1}$. Note that we lose a factor of $TL^{2\delta}$ when taking derivatives at each step for $0 < \sigma < 1$, in which case $TL^{2\delta} \ll L$. 
\end{proof}

\begin{proof}[Proof of Proposition \ref{prop-5vc}]
Let $\beta^{(1)} = \beta^{(2)} = 0$ and $k: = k^{(1)} = k^{(2)}$. Also, let all $a_i = k$. Then note that for $\Omega_j$ denoting the value of the resonance factor $\Omega$ for the $j$th tuple, 
\begin{align*}
\Omega_1 &= k_1^2 - k_2^2 + k_3^2 - k^2 \sim (k - k_1)(k - k_3),\\
\Omega_2 &= k_1^2 - k_4^2 - k_5^2 - k^2 \sim (k - k_1)(k - k_5),
\end{align*}
where we are rewriting $\Omega$ as in Lemma 3.2 of \cite{2019}. Therefore, let us set 
\begin{align}
X &= L(k - k_1), \nonumber \\
Y &= L(k - k_3), \\
Z &= L(k - k_5), \nonumber
\end{align}
so that we are looking for $(X,Y,Z) \in \Z^3$ with $|X|, |Y|, |Z| \leq L$ with 
\begin{align*}
XY & \leq L^2 T^{-1}, \\
XZ & \leq L^2T^{-1}.
\end{align*}
If $T < L$, we note that the following triples for $(X,Y,Z)$ satisfy the above equations and allow all $|k_i - a_i| \leq 1$ for all $i = 1, \ldots, 5$:  
\begin{equation*}
\# \{(X,Y,Z) \in \Z^3 | \hspace{.1cm} 0 < X \leq \frac{1}{2}LT^{-1}, 0 < Y < \frac{1}{2}L, 0 < Z< \frac{1}{2}L \} \sim L^3T^{-1}. 
\end{equation*}
Note that $L^3T^{-1} \gg L^3T^{-\frac{3}{2}}$. Similarly, if $T \geq L$, we may fix $X = 1$ and note that then any choice of $Y,Z$ with $|Y|, |Z| \leq L^2{T^{-1}}$ is sufficient and $L^4T^{-2} \gg L^3T^{-\frac{3}{2}}$ as $T \ll L^2$ when $\gamma < 1$, even if $T = T_{\mathrm{kin}}$. 
\end{proof}

\section{Stage 1: Splicing} \label{sec-splice}
We begin what we refer to as Stage 1 of the proof of Proposition \ref{prop-couples}. Here, we splice irregular chains from couples to yield a cancellation between them. 
\subsection{Double bonds and chains} \label{subsec-chains}
We start by introducing the necessary structures in couples and molecules for cancellation, adapting those in \cite{WKE, WKE2023} to suit our purposes.

\begin{proposition}\label{prop-db}
Let $\Q$ be a couple such that $\mathbb{M}(\Q)$ has nodes $v_1$ and  $v_2$ connected by precisely two edges with opposite directions (resp. same direction). Let $\mathfrak n_j = \mathfrak n(v_j)$. Then, up to symmetry, one of the following scenarios happens, as illustrated in Figure \ref{double-bond}: 
\begin{enumerate}[(a)]
\item \textbf{Cancellation (CL) Double Bond}: There is a child $\mathfrak n_{12}$ of $\mathfrak n_1$ which is paired to a child $\mathfrak n_{21}$ of $\mathfrak n_2$. The node $\mathfrak n_2$ is a child of $\mathfrak n_1$ such that $\mathfrak n_{12}$ and $\mathfrak n_2$ have opposite signs (resp. same signs). All other children of $\mathfrak n_1$ and $\mathfrak n_2$ which are leaves are not paired. In the molecule, this corresponds to a double bond with one LP and one PC bond.
\item \textbf{Connectivity (CN) Double Bond}: There are children $\mathfrak n_{11}$ and $\mathfrak n_{12}$ of $\mathfrak n_1$ with opposite sign (resp. same sign) and children $\mathfrak n_{21}$ and $\mathfrak n_{22}$ of $\mathfrak n_2$ with opposite sign (resp. same sign) so that $\mathfrak n_{11}, \mathfrak n_{12}$ are paired with $\mathfrak n_{21}, \mathfrak n_{22}$ (the exact pairings are dictated by sign) and the remaining children of $\mathfrak n_{1}$ and $\mathfrak n_{2}$ are not paired if they are leaves. Also, $\mathfrak n_2$ is not a child of $\mathfrak n_1$ or vice versa. In the molecule, this corresponds to a double bond with both being LP.
\end{enumerate}
We say that nodes $v_1$ and $v_2$ of the $\M(\Q)$ are connected via a CL, or CN, double bond. 
\end{proposition}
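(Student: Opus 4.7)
The plan is to read the classification directly off the definition of $\M(\Q)$. By construction, any bond between two distinct atoms $v_1, v_2$ is either a PC bond (arising from a parent-child relation between $\mathfrak n_1$ and $\mathfrak n_2$ in the underlying trees) or an LP bond (arising from a pairing of a leaf child of $\mathfrak n_1$ with a leaf child of $\mathfrak n_2$). Since each branching node has a unique parent in its own tree, at most one PC bond can ever join $v_1$ and $v_2$; on the other hand each LP bond is determined by a distinct leaf pair, so multiple LP bonds are allowed. Therefore the only combinatorial possibilities for a double bond are \textbf{(a)} one PC bond together with one LP bond, or \textbf{(b)} two LP bonds. These will correspond respectively to the CL and CN double bonds of the proposition, and the remainder of the proof is a sign analysis in each case.

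For case (a), I would assume by symmetry that $\mathfrak n_2$ is a child of $\mathfrak n_1$, and let the LP bond come from a pairing of some leaf child $\mathfrak n_{12}$ of $\mathfrak n_1$ with some leaf child $\mathfrak n_{21}$ of $\mathfrak n_2$. Any additional pairing between a remaining leaf child of $\mathfrak n_1$ and a remaining leaf child of $\mathfrak n_2$ would produce a third bond, so such leaves must be unpaired between these two atoms; this gives the last sentence of (a). To compare the directions, recall that the PC bond points $v_1 \to v_2$ iff $\zeta_{\mathfrak n_2} = -$, and the LP bond points $v_1 \to v_2$ iff $\zeta_{\mathfrak n_{12}} = -$ (using $\zeta_{\mathfrak n_{12}} = -\zeta_{\mathfrak n_{21}}$, which is forced by the pairing rule in Definition \ref{def-couples}). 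A four-way table on $(\zeta_{\mathfrak n_2}, \zeta_{\mathfrak n_{12}})$ shows that the two bonds are oppositely directed exactly when $\zeta_{\mathfrak n_{12}} \neq \zeta_{\mathfrak n_2}$, and similarly directed exactly when the two signs agree. Compatibility with the ternary tree structure (children of $\mathfrak n_1$ have signs $(+\zeta_{\mathfrak n_1}, -\zeta_{\mathfrak n_1}, +\zeta_{\mathfrak n_1})$) is automatic: for both signs to coincide, $\mathfrak n_{12}$ and $\mathfrak n_2$ must occupy the two outer positions, while opposite signs force exactly one of them to be the middle child.

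For case (b), both bonds are LP, and since there is no PC bond by hypothesis, $\mathfrak n_2$ is not a child of $\mathfrak n_1$ nor vice versa. Label the two pairings $\{\mathfrak n_{11}, \mathfrak n_{21}\}$ and $\{\mathfrak n_{12}, \mathfrak n_{22}\}$ with $\mathfrak n_{1j}$ a child of $\mathfrak n_1$. The pairing rule forces $\zeta_{\mathfrak n_{2j}} = -\zeta_{\mathfrak n_{1j}}$, which in turn dictates which child of $\mathfrak n_2$ pairs with which child of $\mathfrak n_1$ once the signs are fixed. The direction of the $j$th LP bond points $v_1 \to v_2$ iff $\zeta_{\mathfrak n_{1j}} = -$, so opposite directions of the two bonds correspond precisely to $\zeta_{\mathfrak n_{11}} \neq \zeta_{\mathfrak n_{12}}$, and same direction to equal signs; again, two same-sign children of $\mathfrak n_1$ must be the outer two, while opposite signs force one of them to be the middle child. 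As in case (a), any further pairing between remaining leaf children is excluded because it would contribute a third bond.

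The proof is therefore entirely combinatorial, with no analytic content. The only real obstacle is careful bookkeeping: tracking which of the two directional conventions is being used, remembering that paired leaves have opposite signs while sibling children of a ternary branching node have signs in the pattern $(+,-,+)$, and making sure that the ``no other pairings'' clauses are written so that they precisely forbid a third bond, neither more nor less. No new ideas beyond the unpacking of Definition \ref{couple-molecule} are required.
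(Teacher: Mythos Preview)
Your proof is correct and follows exactly the same approach as the paper's: the key observation is that any bond is either PC or LP, and since a branching node has a unique parent, at most one PC bond can join $v_1$ and $v_2$, forcing the dichotomy (a) one PC $+$ one LP or (b) two LP. The paper's own proof is a terse two-line version of this, stating only the key observation and leaving the sign bookkeeping you carefully verified implicit.
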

\begin{proof}
Each bond in the molecule is either a LP bond or a PC bond. It is not possible for both bonds in the double bond to be PC bonds, so at most one is a PC bond. 
\end{proof}

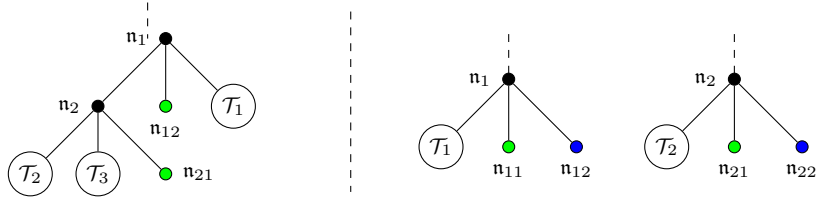
\begin{figure}
\begin{tikzpicture}[scale = .6]
\tikzstyle{hollow node}=[circle,draw,inner sep=2.0]
\tikzstyle{solid node}=[circle,draw,inner sep=1.6,fill=black]
\tikzset{
red node/.style = {circle,draw=black,fill=red,inner sep=1.6},
blue node/.style= {circle,draw = black, fill= blue,inner sep=1.6}, green node/.style = {circle,draw=black,fill=green,inner sep=1.6}}
\node[solid node, label = left: {\tiny $\mathfrak n_1$}] at (4.4,.4){}
    child{node[solid node, label = left: {\tiny $\mathfrak n_2$}]{}
        child{node[hollow node]{\tiny{$\mathcal T_2$}}}
        child{node[hollow node]{\tiny{$\mathcal T_3$}}}
        child{node[green node, label = right: {\tiny$\mathfrak n_{21}$}]{}}
    }
    child{node[green node, label = below: {\tiny$\mathfrak n_{12}$}]{}}
    child{node[hollow node]{\tiny{$\mathcal T_1$}}}
;
\draw[dashed] (4.4,1.2) -- (4.4,.4);
\draw[dashed] (8.5,1) -- (8.5,-3);

\node[solid node, label = left: {\tiny $\mathfrak n_1$}] at (12,-.5){}
child{node[hollow node]{\tiny{$\mathcal T_1$}}}
child{node[green node, label = below: {\tiny$\mathfrak n_{11}$}]{}}
child{node[blue node, label = below: {\tiny$\mathfrak n_{12}$}]{}}
;
\draw[dashed] (12,.5) -- (12,-.5);
\draw[dashed] (17,.5) -- (17,-.5);
\node[solid node, label = left: {\tiny $\mathfrak n_2$}] at (17,-.5){}
child{node[hollow node]{\tiny{$\mathcal T_2$}}}
child{node[green node, label = below: {\tiny$\mathfrak n_{21}$}]{}}
child{node[blue node, label = below: {\tiny$\mathfrak n_{22}$}]{}}
;
\end{tikzpicture}
\caption{CL and CN double bonds, viewed in the couple.}
\label{double-bond}
\end{figure}

\begin{definition}\label{def-chain} 
Given a couple $\Q$ and molecule $\M(\Q)$, a \textit{chain} is a sequence of atoms $(v_0, \ldots, v_{q})$ such that for $i \in \{0, \ldots, q-1\}$, $v_i$ and $v_{i+1}$ are connected via a CN or CL double bond. We call $q$ the \textit{length} of the chain. There are several chain variations, depicted in Figure \ref{fig-chain} at the level of the molecule, which we isolate:
\begin{itemize}
\item A \textit{hyperchain}  is a chain where additionally $v_0$ and $v_{q}$ are connected via a single bond.
\item A \textit{pseudo-hyperchain} is a chain where additionally $v_0$ and $v_{q}$ are each connected to an atom $v$, distinct from $v_0, \ldots, v_q$, via single bonds.
\item A \textit{CL chain} is a chain with only CL double bonds. 
\item A \textit{wide ladder} a collection of chains $\{(v_0^{(1)}, \ldots, v_{q^{(1)}}^{(1)}), \ldots,  (v_0^{(m)}, \ldots, v_{q^{(m)}}^{(m)})\}$ such that for each $1 \leq i \leq m-1$, there is a bond connecting $v_0^{(i)}$ to $v_0^{(i + 1)}$ and $v_{q^{(i)}}^{(i)}$ to $v_{q^{(i + 1)}}^{(i + 1)}$. Each chain in the wide ladder is called a \textit{rung}. See Figure \ref{fig-ladder}. 
\end{itemize}
A \textit{maximal chain} is a chain such that $v_0$ or $v_q$ are not connected to any other atoms not in the chain via CL or CN double bond. We may also refer to \textit{negative chains}, \textit{negative hyperchains}, \textit{negative pseudo-hyperchains}, and \textit{maximal negative chains} which are defined similarly with the further restriction that the two bonds in all of the double bonds have opposite directions. Furthermore, an \textit{irregular chain} is a CL chain which is a negative chain.  

A \textit{negative wide ladder} requires all chains in the collection to be \textit{negative chains}. A \textit{maximal (negative) wide ladder} of a set $\mathscr C$ of maximal (negative) chains is a wide ladder with each rung a maximal (negative) chain for which no (negative) chain in $\mathscr C$ can be added retaining the wide ladder structure.
\end{definition}

\begin{remark}
Note that a wide ladder where each rung has length one is a structure in \cite{WKE2023} called a ladder. 
\end{remark}

\begin{definition}\label{def-chain-couple}
Given a couple $\Q$, a sequence of nodes $(\mathfrak n_0, \ldots, \mathfrak n_q)$ is called a \textit{CL chain} if for $0 \leq j \leq q - 1$, (i) $\mathfrak n_{j+1}$ is a child of $\mathfrak n_j$, and (ii) $\mathfrak n_{j}$ has child $\mathfrak m_{j+1}$ paired to $\mathfrak p_{j + 1}$, a child of $\mathfrak n_{j+1}$. We define $\mathfrak n_0^1$ to be the remaining child of $\mathfrak n_0$ and $\mathfrak n_0^{11}$ and $\mathfrak n_0^{12}$ to be the remaining children of $\mathfrak n_q$. The chain $(\mathfrak n_0, \ldots, \mathfrak n_q)$ is an \textit{irregular chain} if additionally $\mathfrak n_i$ and $\mathfrak m_i$ have opposite sign for $1 \leq i \leq q$. Then, $\mathfrak n_0^{11}$ and $\mathfrak n_0^{12}$ have opposite sign and we will refer to $\mathfrak n_0^{11}$ as the child of $\mathfrak n_q$ having the same sign as $\mathfrak n_0$. Due to our correspondence between couples and labelled molecules, the corresponding sequence $(v_0, \ldots, v_q)$ in $\M(\Q)$ is a CL chain (resp. irregular chain) if and only if $(\mathfrak n_0, \ldots, \mathfrak n_q)$ is.  See Figure \ref{fig-irregular-chain}.
\end{definition}

\begin{figure}
\begin{subfigure}[t]{0.5\linewidth}
\hspace{-1cm}
\begin{tikzpicture}[scale = .9]
    \tikzstyle{every node} = [circle, draw = black]
    \node (1) at (-10,0) {};
    \node (2) at (-9,0) {};
    \node (3) at (-8,0) {};
    \node (4) at (-7,0) {};
    \node (5) at (-6,0) {};
    \node (6) at (-5,0) {};
    \node (7) at (-4,0) {};

    \draw[thick] (1.25) -- (2.155);
    \draw[thick] (2.205) -- (1.335);
    \draw[thick] (2.25) -- (3.155);
    \draw[thick] (3.205) -- (2.335);
    \draw[thick] (3.25) -- (4.155);
    \draw[thick] (4.205) -- (3.335);
    \draw[thick] (4.25) -- (5.155);
    \draw[thick] (5.205) -- (4.335);
    \draw[thick] (5.25) -- (6.155);
    \draw[thick] (6.205) -- (5.335);
    \draw[thick] (6.25) -- (7.155);
    \draw[thick] (7.205) -- (6.335);

    \draw[dashed] (1) -- (-10.6,.5);
    \draw[dashed] (1) -- (-10.6,-.5);

    \draw[dashed] (7) -- (-3.4,.5);
    \draw[dashed] (7) -- (-3.4,-.5);

    \draw[dashed, ->] (-2.6,0) -- node[above, pos = .45,draw = none]{\tiny{splicing}} (-1.4,0);

    \node (8) at (0,0) {};
    \draw[dashed] (8) -- (-.6,.5);
    \draw[dashed] (8) -- (-.6,-.5);
    \draw[dashed] (8) -- (.6,.5);
    \draw[dashed] (8) -- (.6,-.5);
\end{tikzpicture}
    \caption{A general chain, depicted after splicing if the chain is also a CL chain.}
\end{subfigure}
\begin{subfigure}[t]{0.5\linewidth}
\begin{tikzpicture}[scale = .9]
    \tikzstyle{every node} = [circle, draw = black]
    \node (1) at (-5,0) {};
    \node (2) at (-4,0) {};
    \node (3) at (-3,0) {};
    \node (4) at (-2,0) {};
    \node (5) at (-1,0) {};
    \node (6) at (0,0) {};
    \node (7) at (1,0) {};

    \draw[thick] (1.25) -- (2.155);
    \draw[thick] (2.205) -- (1.335);
    \draw[thick] (2.25) -- (3.155);
    \draw[thick] (3.205) -- (2.335);
    \draw[thick] (3.25) -- (4.155);
    \draw[thick] (4.205) -- (3.335);
    \draw[thick] (4.25) -- (5.155);
    \draw[thick] (5.205) -- (4.335);
    \draw[thick] (5.25) -- (6.155);
    \draw[thick] (6.205) -- (5.335);
    \draw[thick] (6.25) -- (7.155);
    \draw[thick] (7.205) -- (6.335);

    \draw[thick] (1.275) to [out=-10,in=-170] (7.265);

    \draw[dashed] (1) -- (-5.7,0);
    \draw[dashed] (7) -- (1.7,0);
    
\end{tikzpicture}
\caption{A hyperchain.}
\label{pic-chain-hyper}
\end{subfigure}
~
\begin{subfigure}[t]{0.5\linewidth}
\begin{tikzpicture}[scale = .9]
    \tikzstyle{every node} = [circle, draw = black]
    \node (1) at (-5,0) {};
    \node (2) at (-4,0) {};
    \node (3) at (-3,0) {};
    \node (4) at (-2,0) {};
    \node (5) at (-1,0) {};
    \node (6) at (0,0) {};
    \node (7) at (1,0) {};
    \node (8) at (-2,.8) {\tiny$v$};

    \draw[thick] (1.25) -- (2.155);
    \draw[thick] (2.205) -- (1.335);
    \draw[thick] (2.25) -- (3.155);
    \draw[thick] (3.205) -- (2.335);
    \draw[thick] (3.25) -- (4.155);
    \draw[thick] (4.205) -- (3.335);
    \draw[thick] (4.25) -- (5.155);
    \draw[thick] (5.205) -- (4.335);
    \draw[thick] (5.25) -- (6.155);
    \draw[thick] (6.205) -- (5.335);
    \draw[thick] (6.25) -- (7.155);
    \draw[thick] (7.205) -- (6.335);
    \draw[thick] (1.80) -- (8);
    \draw[thick] (8) -- (7.100);

    \draw[dashed] (1) -- (-5.7,0);
    \draw[dashed] (7) -- (1.7,0);
\end{tikzpicture}
\caption{A pseudo-hyperchain.}
\label{pic-chain-pseudohyper}
\end{subfigure}
\caption{An example of several types of chains with $q = 6$.}
\label{fig-chain}
\end{figure}
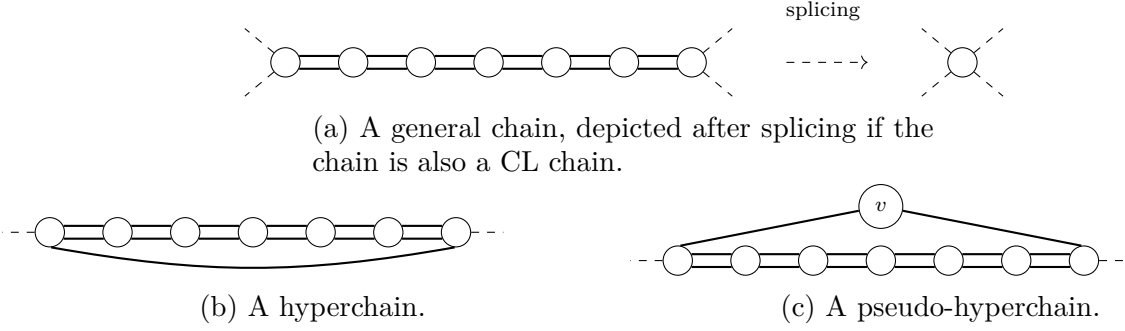

\begin{figure}[t]
\centering
\begin{tikzpicture}[scale = 1]
    \tikzstyle{n node} = [circle, draw = black]
    \tikzstyle{o node} = [inner sep=1,outer sep=0]

    \node[n node] (1a) at (-2,0) {};
    \node[n node] (2a) at (-1,0) {};
    \node[o node] (3a) at (0,0) {$\cdots$};
    \node[n node] (4a) at (1,0) {};
    \node[n node] (5a) at (2,0) {};

    \node[n node] (1b) at (-2,-1) {};
    \node[n node] (2b) at (-1,-1) {};
    \node[o node] (3b) at (0,-1) {$\cdots$};
    \node[n node] (4b) at (1,-1) {};
    \node[n node] (5b) at (2,-1) {};

    \node[n node] (1c) at (-2,-2) {};
    \node[n node] (2c) at (-1,-2) {};
    \node[o node] (3c) at (0,-2) {$\cdots$};
    \node[n node] (4c) at (1,-2) {};
    \node[n node] (5c) at (2,-2) {};

    \node[o node]  at (-2,-1.5) {$\cdots$};
    \node[o node]  at (2,-1.5) {$\cdots$};

    \draw[thick] (1a.25) -- (2a.155);
    \draw[thick] (2a.205) -- (1a.335);
    \draw[thick] (2a.25) -- (3a.165);
    \draw[thick] (3a.195) -- (2a.335);
    \draw[thick] (3a.15) -- (4a.155);
    \draw[thick] (4a.205) -- (3a.345);
    \draw[thick] (4a.25) -- (5a.155);
    \draw[thick] (5a.205) -- (4a.335);

    \draw[thick] (1b.25) -- (2b.155);
    \draw[thick] (2b.205) -- (1b.335);
    \draw[thick] (2b.25) -- (3b.165);
    \draw[thick] (3b.195) -- (2b.335);
    \draw[thick] (3b.15) -- (4b.155);
    \draw[thick] (4b.205) -- (3b.345);
    \draw[thick] (4b.25) -- (5b.155);
    \draw[thick] (5b.205) -- (4b.335);

    \draw[thick] (1c.25) -- (2c.155);
    \draw[thick] (2c.205) -- (1c.335);
    \draw[thick] (2c.25) -- (3c.165);
    \draw[thick] (3c.195) -- (2c.335);
    \draw[thick] (3c.15) -- (4c.155);
    \draw[thick] (4c.205) -- (3c.345);
    \draw[thick] (4c.25) -- (5c.155);
    \draw[thick] (5c.205) -- (4c.335);

    \draw[thick] (1a) -- (1b);
    \draw[thick] (5a) -- (5b);

    \draw[dashed] (1a) -- (-2,.5);
    \draw[dashed] (5a) -- (2,.5);
    \draw[dashed] (1c) -- (-2,-2.5);
    \draw[dashed] (5c) -- (2,-2.5);

    \draw[dashed] (1b) -- (-2,-1.4);
    \draw[dashed] (5b) -- (2,-1.4);
    \draw[dashed] (1c) -- (-2,-1.6);
    \draw[dashed] (5c) -- (2,-1.6);
    
\end{tikzpicture}
\caption{A wide ladder, with 3 rungs shown. Note that each rung may have a different length $q$.}
\label{fig-ladder}
\end{figure}

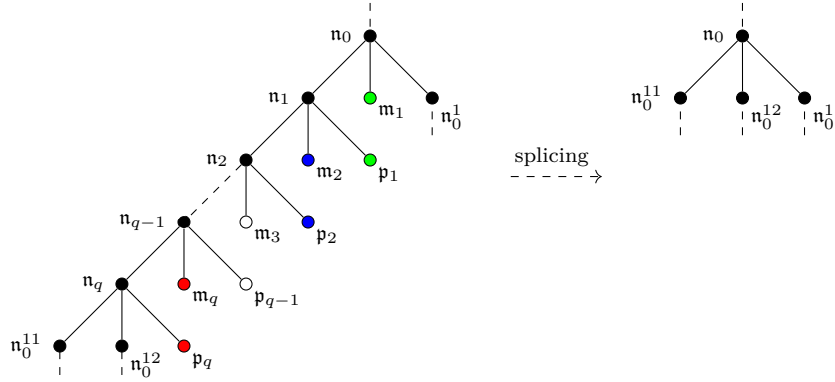
\begin{figure}
\centering
\begin{tikzpicture}[scale = .55,
level distance=1.5cm,
  level 1/.style={sibling distance=1.5cm},
  level 2/.style={sibling distance=1.5cm}]
\tikzstyle{hollow node}=[circle,draw,inner sep=1.6]
\tikzstyle{solid node}=[circle,draw,inner sep=1.6,fill=black]
\tikzset{
red node/.style = {circle,draw=black,fill=red,inner sep=1.6},
blue node/.style= {circle,draw = black, fill= blue,inner sep=1.6}, green node/.style = {circle,draw=black,fill=green,inner sep=1.6}}, 
[edge from parent/.style={draw,dashed}]
\node[solid node, label = left: {\tiny $\mathfrak n_0$}] at (14,.4){}
    child{node[solid node, label = left: {\tiny $\mathfrak n_1$}]{}
        child{node[solid node, label = left: {\tiny $\mathfrak n_2$}]{}
            child[dashed]{node[solid node, label = left: {\tiny $\mathfrak n_{q-1}$}]{}
                child[solid]{node[solid node, label = left: {\tiny $\mathfrak n_{q}$}]{}
                    child{node[solid node, label = left: {\tiny $\mathfrak n_0^{11}$}]{}}
                    child{node[solid node, label={[label distance=-1.5mm] 300:\tiny $\mathfrak n_0^{12}$}]{}}
                    child{node[red node, label={[label distance=-1.5mm] 300:\tiny $\mathfrak p_q$}]{}}
                    }
                child[solid]{node[red node, label={[label distance=-1.5mm] 300:\tiny $\mathfrak m_{q}$}]{}}
                child[solid]{node[hollow node, label={[label distance=-1.5mm] 300:\tiny $\mathfrak p_{q-1}$}]{}}
                }
            child{node[hollow node, label={[label distance=-1.5mm] 300:\tiny $\mathfrak m_3$}]{}}
            child{node[blue node, label={[label distance=-1.5mm] 300:\tiny $\mathfrak p_2$}]{}}
            }
        child{node[blue node, label={[label distance=-1.5mm] 300:\tiny $\mathfrak m_2$}]{}}
        child{node[green node, label={[label distance=-1.5mm] 300:\tiny $\mathfrak p_1$}]{}}
    }
    child{node[green node, label={[label distance=-1.5mm] 300:\tiny $\mathfrak m_1$}]{}}
    child{node[solid node, label={[label distance=-1.5mm] 300:\tiny $\mathfrak n_0^1$}]{}}
;
\draw[dashed] (14,1.2) -- (14,.4);
\draw[dashed] (15.5,-2) -- (15.5,-1.2);
\draw[dashed] (8,-7.8) -- (8,-7);
\draw[dashed] (6.5,-7.8) -- (6.5,-7);

\draw[dashed, ->] (17.4,-3) -- node[above, pos = .45,draw = none]{\tiny{splicing}}(19.6, -3);

\node[solid node, label = left: {\tiny $\mathfrak n_0$}] at (23,.4){}
    child{node[solid node, label = left: {\tiny $\mathfrak n_0^{11}$}]{}}
    child{node[solid node, label={[label distance=-1.5mm] 300:\tiny $\mathfrak n_0^{12}$}]{}}
    child{node[solid node, label={[label distance=-1.5mm] 300:\tiny $\mathfrak n_0^1$}]{}}
;
\draw[dashed] (23,1.2) -- (23,.4);
\draw[dashed] (24.5,-2) -- (24.5,-1.2);
\draw[dashed] (23,-2) -- (23,-1.2);
\draw[dashed] (21.5,-2) -- (21.5,-1.2);
\end{tikzpicture}
\caption{A CL chain (to be spliced) which is also an irregular chain, as in Definition \ref{def-chain-couple}. The white leaves are paired with leaves in the omitted part rather than each other, so that $\mathfrak m_i$ is paired with $\mathfrak p_{i}$ for $i \in \{1, \ldots, q\}$.} 
\label{fig-irregular-chain}
\end{figure}

\begin{definition}\label{def-splicing}
Suppose $\Q$ is a couple with a CL chain $(\mathfrak n_0, \ldots, \mathfrak n_q)$. We can define a new couple $\Q^{\mathrm{sp}}$, and corresponding new molecule $\mathbb{M}^{\mathrm{sp}}$, by removing the nodes and leaves $\mathfrak n_i, \mathfrak m_i, \mathfrak p_i$ for  $(i = 1, \ldots, q)$. The children of $\mathfrak n_0$ become $\mathfrak n_0^1$, $\mathfrak n_0^{11}$, and $\mathfrak n_0^{12}$, with their position determined by sign or by their relative position as children of $\mathfrak n_q$. We call this \textit{splicing} the couple at nodes $\mathfrak n_1, \ldots, \mathfrak n_q$, or at the chain $(\mathfrak n_0, \ldots, \mathfrak n_q)$. 
\end{definition}

\begin{definition}
Suppose $\M(\Q)$ is a molecule containing a double bond with the edges having opposite direction. For a decoration of $\M(\Q)$, denote $k$ and $\ell$ the decoration of the edges in the double bond, and let $h = k-\ell$ be the \textit{gap} of the double bond. We refer to the double bond as a \textit{large gap (LG) double bond} if $|h| \gtrsim T^{-\frac{1}{2}}$ and a \textit{small gap (SG) double bond} if $|h| \lesssim T^{-\frac{1}{2}}$. By Proposition 8.3 of \cite{WKE}, all gaps in a chain are identical, so we may also refer to a \textit{SG or LG negative chain}, or negative chain-like object. Similarly, an irregular chain $(\mathfrak n_0, \ldots, \mathfrak n_q)$ of $\Q$ can be SG or LG, with the gap $h := k_{\mathfrak n_0} - k_{\mathfrak n_0^1} = k_{\mathfrak n_j} - k_{\mathfrak m_j}$ ($1 \leq j \leq q$).
\end{definition}

\begin{proposition}\label{prop-CN}
Given a molecule $\M(\mathcal Q)$ and chain $(v_0, \ldots, v_{q})$, there can be at most one CN double bond. 
\end{proposition}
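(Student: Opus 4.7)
The plan is a ``slot accounting'' argument at each atom of the chain. Each branching node $\mathfrak{n}_i$ of $\mathcal{Q}$ corresponding to an atom $v_i \in \mathbb{M}(\mathcal{Q})$ has exactly three children and at most one parent, so the incident bonds of $v_i$ in the molecule occupy at most three \emph{child slots} and at most one \emph{parent slot}. First I will tabulate, using Proposition \ref{prop-db}, how each type of double bond incident to $v_i$ consumes these slots: a CN double bond uses two child slots of $v_i$ (two LP bonds corresponding to two distinct children of $\mathfrak{n}_i$, namely $\mathfrak{n}_{i,1}$ and $\mathfrak{n}_{i,2}$ in the notation of Proposition \ref{prop-db}(b)); a CL double bond in which $v_i$ is the parent uses two child slots of $v_i$ (one for the PC bond to its child branching node and one for the LP bond involving the other paired child $\mathfrak{n}_{12}$); a CL double bond in which $v_i$ is the child uses one child slot of $v_i$ (the LP bond for $\mathfrak{n}_{21}$) together with the one parent slot (the PC bond back up to the parent atom).

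With this table I will analyze any middle atom $v_i$ (so $0 < i < q$), which is incident to exactly the two chain bonds $B_{i-1}$ and $B_i$. Any configuration demanding four child slots at $v_i$ is forbidden, which rules out: two CN bonds at $v_i$; a CN bond together with a CL bond in which $v_i$ is the parent; and two CL bonds both of which have $v_i$ as the parent. Likewise, two CL bonds both having $v_i$ as the child is impossible, since $v_i$ has only one parent slot. Putting these exclusions together yields the key dichotomy: \emph{if $v_i$ is a middle atom and one of its chain bonds is CN, then the other chain bond at $v_i$ must be CL with $v_i$ as the child}.

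The proposition will then follow from a one-sided propagation. Assume for contradiction that two chain bonds $B_a$ and $B_b$ are CN, with $0 \leq a < b \leq q-1$. If $b = a+1$, the atom $v_{a+1}$ is incident to two CN bonds, which the dichotomy forbids. Otherwise $a+1 < b \leq q-1$, so $v_{a+1}$ is a middle atom and the dichotomy forces $B_{a+1}$ to be a CL bond with $v_{a+1}$ as the child; consequently $v_{a+2}$ is the parent of $v_{a+1}$ in the couple. Now $v_{a+2}$ is a middle atom (as $a+2 \leq b \leq q-1$) and is incident to $B_{a+1}$ as a CL bond in which $v_{a+2}$ is the parent; the slot table immediately excludes $B_{a+2}$ being CN or being CL with $v_{a+2}$ as parent, forcing $B_{a+2}$ to be CL with $v_{a+2}$ as the child. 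Iterating this propagation shows that every $B_{a+j}$ with $1 \leq j \leq q-1-a$ is CL, contradicting that $B_b$ is CN.

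The main obstacle is setting up the slot tabulation precisely: one must verify from Proposition \ref{prop-db} that the two LP bonds of a CN double bond correspond to \emph{distinct} children of $\mathfrak{n}_i$, and that the PC and LP bonds of a CL double bond at the parent do likewise, so that two child slots are genuinely consumed in each case. Both points follow directly from the descriptions in Proposition \ref{prop-db}(a)--(b). Once this bookkeeping is in place, the dichotomy and its propagation are essentially mechanical.
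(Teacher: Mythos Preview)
Your proof is correct and takes essentially the same approach as the paper: both count the three child slots and one parent slot at each middle atom of the chain and propagate from a CN double bond to force every subsequent double bond to be CL. The paper's version simply assumes WLOG that the CN bond sits at one end of the chain and propagates directly, whereas you frame it as a contradiction between two hypothetical CN bonds, but the underlying slot-accounting mechanism is identical.
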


\begin{figure}
\begin{tikzpicture}
    \tikzstyle{every node} = [circle, draw = black, inner sep = 2.3, scale = .9]
    \node (1) at (-5,0) {\tiny $v_0$};
    \node (2) at (-3.5,0) {\tiny $v_1$};
    \node (3) at (-2,0) {\tiny $v_2$};
    \node (4) at (-.5,0) {\tiny $v_3$};
    \node (5) at (1,0) {\tiny $v_{q-1}$};
    \node (6) at (2.5,0) {\tiny $v_{q}$};

    \draw[thick] (1.25) -- (2.155) 
        node[above, pos = .5,draw = none]{\tiny{LP}};
    \draw[thick] (2.205) -- (1.335)
        node[below, pos = .5,draw = none]{\tiny{LP}};
    \draw[thick] (2.25) -- node[above, draw = none]{\tiny{LP}}(3.155);
    \draw[thick] (3.205) -- (2.335)
        node[below, pos = .1,draw = none]{\tiny{P}}
        node[below, pos = .8,draw = none]{\tiny{C}};
    \draw[thick] (3.25) -- node[above, draw = none]{\tiny{LP}}(4.155);
    \draw[thick] (4.205) -- (3.335)
        node[below, pos = .1,draw = none]{\tiny{P}}
        node[below, pos = .8,draw = none]{\tiny{C}};
    \draw[dashed] (4.25) -- (5.163);
    \draw[dashed] (5.197) -- (4.335);
    \draw[thick] (5.17) -- node[above, draw = none]{\tiny{LP}}(6.155);
    \draw[thick] (6.205) -- (5.343)
        node[below, pos = .1,draw = none]{\tiny{P}}
        node[below, pos = .8,draw = none]{\tiny{C}};

    \draw[dashed] (1) -- (-5.6,.5);
    \draw[dashed] (1) -- (-5.6,-.5);

    \draw[dashed] (6) -- (3.1,.5);
    \draw[dashed] (6) -- (3.1,-.5);
\end{tikzpicture}
\caption{Labelling of a chain with a CN double bond.}
\label{fig-CN}
\end{figure}
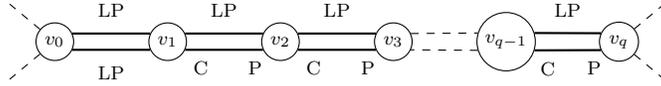

\begin{proof}
Suppose WLOG that $v_0$ and $v_1$ are connected via a CN double bond. Then, we may attempt to label the molecule starting at $v_0$. For each double bond in the chain, at least one edge is an LP edge, and both edges connecting $v_0$ and $v_1$ are LP edges. So, $v_1$ has all 3 children accounted for. If a vertex $v_i$ has all 3 children accounted for, the remaining edge must be a PC edge with $v_{i+1}$ the parent, leaving $v_{i+1}$ with all its children accounted for. So, $v_{i+1}$ is the parent of $v_{i}$ for $i \in \{1, \ldots, q-1\}$, making all other bonds in the chain CL double bonds. See Figure \ref{fig-CN}.
\end{proof}

\subsection{Cancellation of an irregular chain} \label{subsec-cancellation}

\begin{definition}\label{def-twist} Given a couple $\Q$, we call node $\mathfrak n_2 \in \mathcal N$ \textit{admissible} if it has parent $\mathfrak n_1$ such that the corresponding atoms $v_1$ and $v_2$ in $\M(\Q)$ are connected via a CL double bond. We additionally call $\mathfrak n_2$ \textit{twist-admissible} if the two bonds have opposite directions. Denote the paired children of $\mathfrak n_1$ and $\mathfrak n_2$ by $\mathfrak n_{12}$ and $\mathfrak n_{21}$. 

For $\mathfrak n_2$ twist-admissible, a \textit{unit twist} of $\mathcal Q$ at node $\mathfrak n_2$ is performed, as shown in Figure \ref{pic-twist}, by swapping $\mathfrak n_{12}$ with $\mathfrak n_2$ and swapping the two children (and the trees below them) of $\mathfrak n_2$ which are not $\mathfrak n_{21}$, so that all other parent-child relationships in the couple and their signs remain the same. When there may be ambiguity between a couple and its unit twist, we use $\tilde{\mathfrak m}$ to refer to a node or leaf in the unit twist coming from node or leaf $\mathfrak m$ in $\mathcal Q$. 
\end{definition}

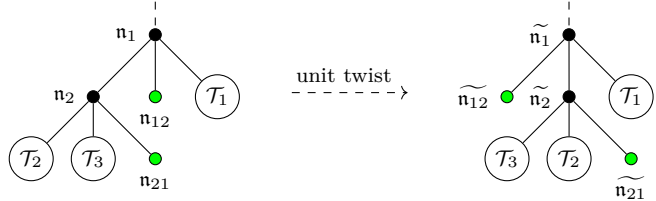
\begin{figure}[t!]
        \centering
\begin{tikzpicture}[scale = .55]
\tikzstyle{hollow node}=[circle,draw,inner sep=2.0]
\tikzstyle{solid node}=[circle,draw,inner sep=1.6,fill=black]
\tikzset{
red node/.style = {circle,draw=black,fill=red,inner sep=1.6},
blue node/.style= {circle,draw = black, fill= blue,inner sep=1.6}, green node/.style = {circle,draw=black,fill=green,inner sep=1.6}}
\node[solid node, label = left: {\tiny $\mathfrak n_1$}] at (14,.4){}
    child{node[solid node, label = left: {\tiny $\mathfrak n_2$}]{}
        child{node[hollow node]{\tiny{$\mathcal T_2$}}}
        child{node[hollow node]{\tiny{$\mathcal T_3$}}}
        child{node[green node, label = below:{\tiny $\mathfrak n_{21}$}]{}}
    }
    child{node[green node, label = below:{\tiny $\mathfrak n_{12}$}]{}}
    child{node[hollow node]{\tiny{$\mathcal T_1$}}}
;
\draw[dashed] (14,1.2) -- (14,.4);

\draw[dashed, ->] (17.3,-1) -- node[above, pos = .45,draw = none]{\tiny{unit twist}}(20.1, -1);

\node[solid node, label = left: {\tiny $\widetilde{\mathfrak n_1}$}] at (24,.4){}
    child{node[green node, label = left:{\tiny $\widetilde{\mathfrak n_{12}}$}]{}}
    child{node[solid node, label = left: {\tiny $\widetilde{\mathfrak n_2}$}]{}
        child{node[hollow node]{\tiny{$\mathcal T_3$}}}
        child{node[hollow node]{\tiny{$\mathcal T_2$}}}
        child{node[green node, label = below:{\tiny $\widetilde{\mathfrak n_{21}}$}]{}}
    }
    child{node[hollow node]{\tiny{$\mathcal T_1$}}}
;
\draw[dashed] (24,1.2) -- (24,.4);

\end{tikzpicture}
    \caption{Unit twist of a couple at node $\mathfrak n_2$.}
    \label{pic-twist}
\end{figure}

\begin{remark}\label{twist-dec}
If $\mathcal Q$ is a decorated couple and we perform a unit twist on it at node $\mathfrak n_2$ to obtain a couple $\mathcal Q'$ we obtain a corresponding decoration of $\mathcal Q'$ by leaving all values of ${k_{\tilde{\mathfrak m}}}$ the same other than ${k_{\widetilde{\mathfrak n_2}}}, {k_{\widetilde{\mathfrak n_{12}}}}$, and ${k_{\widetilde{\mathfrak n_{21}}}}$. We set ${k_{\widetilde{\mathfrak n_2}}} = k_{\mathfrak n_{12}} = k_{\mathfrak n_{21}}$ and ${k_{\widetilde{\mathfrak n_{12}}}} = {k_{\widetilde{\mathfrak n_{21}}}} = k_{\mathfrak n_2}$. As a result, if we splice both $\mathcal Q$ or $\mathcal Q'$ at $\mathfrak n_2$, we obtain the same couple and corresponding molecule for each with the same decoration. See Figure \ref{pic-twistdec}.
\end{remark}

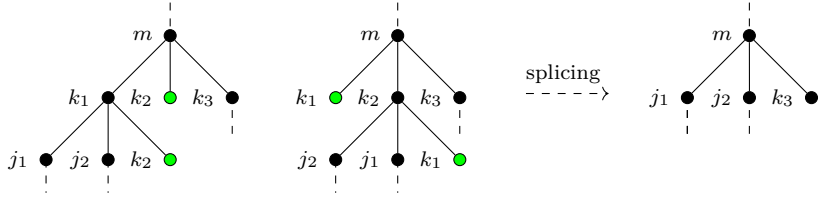
\begin{figure}[t!]
\centering
\begin{tikzpicture}[scale = .55]
\tikzstyle{hollow node}=[circle,draw,inner sep=2.0]
\tikzstyle{solid node}=[circle,draw,inner sep=1.6,fill=black]
\tikzset{
red node/.style = {circle,draw=black,fill=red,inner sep=1.6},
blue node/.style= {circle,draw = black, fill= blue,inner sep=1.6}, green node/.style = {circle,draw=black,fill=green,inner sep=1.6}}
\node[solid node, label = left: {\tiny $m$}] at (11.5,.4){}
    child{node[solid node, label = left: {\tiny $k_1$}]{}
        child{node[solid node, label = left: {\tiny $j_1$}]{}}
        child{node[solid node, label = left: {\tiny $j_2$}]{}}
        child{node[green node, label = left:{\tiny $k_2$}]{}}
    }
    child{node[green node, label = left:{\tiny $k_2$}]{}}
    child{node[solid node, label = left: {\tiny $k_3$}]{}}
;
\draw[dashed] (11.5,1.2) -- (11.5,.4);
\draw[dashed] (8.5,-2.6) -- (8.5,-3.4);
\draw[dashed] (10,-2.6) -- (10,-3.4);
\draw[dashed] (13,-1.4) -- (13,-2);

\node[solid node, label = left: {\tiny $m$}] at (17,.4){}
    child{node[green node, label = left: {\tiny $k_1$}]{}}
    child{node[solid node, label = left: {\tiny $k_2$}]{}
        child{node[solid node, label = left: {\tiny $j_2$}]{}}
        child{node[solid node, label = left: {\tiny $j_1$}]{}}
        child{node[green node, label = left: {\tiny $k_1$}]{}}
    }
    child{node[solid node, label = left: {\tiny $k_3$}]{}}
;
\draw[dashed] (17,1.2) -- (17,.4);
\draw[dashed] (17,-2.6) -- (17,-3.4);
\draw[dashed] (15.5,-2.6) -- (15.5,-3.4);
\draw[dashed] (18.5,-1.4) -- (18.5,-2);

\draw[dashed, ->] (20.1,-1) -- node[above, pos = .45,draw = none]{\tiny{splicing}} (22.1,-1);

\node[solid node, label = left: {\tiny $m$}] at (25.5,.4){}
    child{node[solid node, label = left: {\tiny $j_1$}]{}}
    child{node[solid node, label = left: {\tiny $j_2$}]{}}
    child{node[solid node, label = left: {\tiny $k_3$}]{}}
;
\draw[dashed] (25.5,1.2) -- (25.5,.4);
\draw[dashed] (25.5,-1.4) -- (25.5,-2);
\draw[dashed] (24,-1.4) -- (24,-2);
\draw[dashed] (24,-1.4) -- (24,-2);

\end{tikzpicture}
    \caption{Decorations of the couples from Figure \ref{pic-twist} and the decoration of the couple upon splicing them both at $\mathfrak n_2$ and $\widetilde{\mathfrak n_2}$.}
    \label{pic-twistdec}
\end{figure}

\begin{definition} \label{def-congruent}
We say couples $\mathcal Q_1$ and $\mathcal Q_2$ are \textit{congruent} ($\mathcal Q_1 \sim \mathcal Q_2$) if any number of unit twists of $\mathcal Q_1$ yields $\mathcal Q_2$ and vice versa. Via Remark \ref{twist-dec}, the decorations of $\mathcal Q_1$ and $\mathcal Q_2$ are in one-to-one correspondence. 
\end{definition}

\begin{remark}
The unit twist operation is commutative, so we may specify how two couples are congruent by specifying a set of nodes at which a unit twist was performed. For a couple $\mathcal Q$ and set $\mathcal M \subset \mathcal N$ of twist-admissible nodes, denote by $\textgoth{Q}_{\mathcal M}$: 
\begin{equation*}
\mathcal {\textgoth{Q}_\mathcal M} = \{\mathcal Q' | \mathcal Q' \sim \mathcal Q \text{ via } \mathcal M' \in 2^{\mathcal M}\}.
\end{equation*}
Similarly, set 
\begin{equation*}
(\K_{\textgoth{Q}_{\mathcal M}}) (t,s,k) = \sum_{\Q' \in \textgoth{Q}_\mathcal M}\mathcal{K_{Q'}}(t,s,k).
\end{equation*}
\end{remark}

\begin{lemma}\label{lem-splice}
For a couple $\mathcal Q$ and SG irregular chain $(\mathfrak n_0, \ldots, \mathfrak n_{q})$ of length $q$, 
\begin{align*}
\K_{\textgoth{Q}_\mathcal M} (t,s,k) &= \left( \frac{\alpha T}{L}\right)^{n_{\mathrm{sp}}} \zeta(\mathcal Q^{\mathrm{sp}}) \int _0^1 \diff \varsigma\sum_{\mathscr E^{\mathrm{sp}}} \tilde \epsilon_{\mathscr E^{\mathrm{sp}}} \int_{\mathcal{E}^{\mathrm{sp}}_{\varsigma}} \left( \prod_{\mathfrak n \in \mathcal N^{\mathrm{sp}}}e^{\zeta_{\mathfrak n} 2 \pi i Tt_{\mathfrak n}\Omega_{\mathfrak n}} \diff t_{\mathfrak n} \right) \\
& \hspace{5cm}\times P_q(\varsigma, k[\mathcal N^{\mathrm{sp}}]) \prod^+_{\mathfrak l \in \mathcal L^{\mathrm{sp}}} n_{\mathrm{in}}(k_{\mathfrak l}),
\end{align*}
where $P_q$ satisfies \begin{equation}\label{eq-pestimate}
\sup_{|k_{\mathfrak n_0} - k_{\mathfrak n^1_0}| \leq T^{-1/2}} \|P_q(\varsigma, k[\mathcal N^{\mathrm{sp}}])\|_{\ell^\infty} \lesssim \left(\alpha T^{1/2}\right)^q,
\end{equation} and 
\begin{equation}\label{eq-sig-setE}
\mathcal E^{\mathrm{sp}}_{\varsigma} = \mathcal E^{\mathrm{sp}} \cap \{ t_{\mathfrak n_0} \geq t_{\mathfrak n_0^{11}} + \varsigma, t_{\mathfrak n_0} \geq t_{\mathfrak n_0^{12}} + \varsigma \}.
\end{equation}
In the above, $\mathcal Q^{\mathrm{sp}}$ is the couple $\mathcal Q$ with the irregular chain spliced out, $k[\mathcal N^{\mathrm{sp}}]$ are the decorations of the nodes in the spliced couple, and $\mathcal M = \{\mathfrak n_1, \ldots, \mathfrak n_{q}\}$. Additionally, $\tilde \epsilon_{\mathscr E^{\mathrm{sp}}}$ allows for degeneracies at $\mathfrak n_0$. 
\end{lemma}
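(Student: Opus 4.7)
My plan is to compute $\mathcal{K}_{\textgoth{Q}_\mathcal{M}}(t,s,k)$ by summing the kernels $\mathcal{K}_{\mathcal{Q}'}$ over all $2^q$ couples $\mathcal{Q}'$ in the congruence class $\textgoth{Q}_\mathcal{M}$, reorganizing the result so that the sums and integrations associated with the irregular chain factor out into a single kernel $P_q(\varsigma,k[\mathcal{N}^{\mathrm{sp}}])$ attached to a decoration of the spliced couple. The first step is to use Remark \ref{twist-dec}: decorations of $\mathcal{Q}$ are in bijection with decorations of every twisted $\mathcal{Q}' \in \textgoth{Q}_\mathcal{M}$, and all such decorations descend to the same spliced decoration $\mathscr{E}^{\mathrm{sp}}$ of $\mathcal{Q}^{\mathrm{sp}}$. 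So I can recast the sum over $\mathcal{Q}'$ and $\mathscr{E}$ as a sum over $\mathscr{E}^{\mathrm{sp}}$, the $q$ residual chain wavenumbers $k_{\mathfrak{p}_1},\dots,k_{\mathfrak{p}_q}$ that label the paired leaves of the chain, and the $2^q$ subsets $\mathcal{M}'\subseteq\mathcal{M}$ labeling which chain nodes are twisted. The gap $h = k_{\mathfrak{n}_0} - k_{\mathfrak{n}_0^1}$ is a function of $\mathscr{E}^{\mathrm{sp}}$ alone, and the SG hypothesis gives the supremum restriction $|h|\le T^{-1/2}$.

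The second step is to track how $\zeta_\mathfrak{n}\Omega_\mathfrak{n}$, the degeneracy indicator $\epsilon_{\mathscr{E}}$, and the leaf weights transform under a unit twist at $\mathfrak{n}_j$. A direct calculation (generalizing the $q=1$ check: $\zeta_{\widetilde{\mathfrak{n}}_1}\Omega_{\widetilde{\mathfrak{n}}_1}=\zeta_{\mathfrak{n}_1}\Omega_{\mathfrak{n}_1}$ and $\zeta_{\widetilde{\mathfrak{n}}_2}\Omega_{\widetilde{\mathfrak{n}}_2}=\zeta_{\mathfrak{n}_2}\Omega_{\mathfrak{n}_2}$) shows that the signed resonance factors at spliced nodes are invariant under every twist, while at chain nodes $\mathfrak{n}_j$ the values $\Omega_{\mathfrak{n}_j}$ flip sign but so do $\zeta_{\mathfrak{n}_j}$, so $\zeta_{\mathfrak{n}_j}\Omega_{\mathfrak{n}_j}$ is also preserved. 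The only genuinely twist-dependent data is the value of $k_{\mathfrak{m}_j}$ at the paired leaf, which shifts by $\pm h$. The factors $\epsilon_{\mathscr{E}}$ at the chain nodes collapse, under the sum over twists, into the relaxed indicator $\tilde\epsilon_{\mathscr{E}^{\mathrm{sp}}}$ on $\mathcal{Q}^{\mathrm{sp}}$ that tolerates degeneracies at the remaining spliced node. At this stage the summation over $\mathcal{M}'\subseteq\mathcal{M}$ has produced, at each chain level, an expression built from the pair $\{n_{\mathrm{in}}(k_{\mathfrak{p}_j}),\,n_{\mathrm{in}}(k_{\mathfrak{p}_j}+h)\}$ together with two twist-conjugate copies of the chain phase.

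The third step repackages the chain time integral. In $\mathcal{E}$ the chain times satisfy $t_{\mathfrak{n}_0^{11}},t_{\mathfrak{n}_0^{12}},t_{\mathfrak{p}_q}<t_{\mathfrak{n}_q}<t_{\mathfrak{n}_{q-1}}<\cdots<t_{\mathfrak{n}_1}<t_{\mathfrak{n}_0}$. Introducing $\varsigma:=t_{\mathfrak{n}_0}-t_{\mathfrak{n}_q}\in[0,1]$ I can integrate the $q-1$ intermediate chain times over a simplex of scale $\varsigma$ and absorb that integration into $P_q(\varsigma,k[\mathcal{N}^{\mathrm{sp}}])$, together with the chain phases and the product of paired-leaf factors. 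The residual constraints $t_{\mathfrak{n}_0^{11}},t_{\mathfrak{n}_0^{12}}<t_{\mathfrak{n}_0}-\varsigma$ are exactly those defining $\mathcal{E}^{\mathrm{sp}}_\varsigma$ in (\ref{eq-sig-setE}). The prefactor $(\alpha T/L)^n$ splits as $(\alpha T/L)^{n_\mathrm{sp}}$ for the spliced part times $(\alpha T/L)^q$, which I incorporate into $P_q$.

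The last step is the pointwise bound $\|P_q\|_\infty\lesssim(\alpha T^{1/2})^q$. Under SG the chain resonance factors $\Omega_{\mathfrak{n}_j}$ satisfy $|\Omega_{\mathfrak{n}_j}|\lesssim|h|\max(|k_{\mathfrak{n}_{j+1}}|,|k_{\mathfrak{p}_j}|)^{\sigma-1}\lesssim T^{-1/2}$ by the mean value theorem applied to $\omega$; hence the chain phases do not oscillate on $[0,1]$ and the intermediate time integrations contribute $O(1)$ each. Each sum over $k_{\mathfrak{p}_j}\in\Z_L$ is of size $L$, but the twist-pair structure and the small gap together produce a gain of $T^{-1/2}$ per level (either from the sharpened 2-vector count of Lemma \ref{lem-sg2vc} applied to the SG chain, or from a differencing of the twisted/untwisted leaf weights). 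Multiplying these with the $(\alpha T/L)^q$ prefactor absorbed into $P_q$ yields the claimed exponent of $q$ on $\alpha T^{1/2}$. The main obstacle I anticipate is Step 2: verifying the simultaneous transformation rules for $\zeta\Omega$, $\epsilon$, and the decoration constraints under arbitrary subsets of commuting unit twists, and then correctly identifying the generalized indicator $\tilde\epsilon_{\mathscr{E}^{\mathrm{sp}}}$; once that algebra is pinned down, Steps 3--4 amount to bookkeeping plus an application of the SG hypothesis together with Proposition \ref{prop-vc}.
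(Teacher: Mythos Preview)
Your overall architecture (Steps~1--3) matches the paper's proof closely: fix the spliced decoration, sum over the $2^q$ twists and the free chain variables, then substitute $\varsigma=t_{\mathfrak n_0}-t_{\mathfrak n_q}$ to pull out $P_q$. The gap is in Step~2/Step~4, where you misidentify what is twist-dependent and therefore miss the actual source of the $T^{-1/2}$ gain.

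You write that ``the only genuinely twist-dependent data is the value of $k_{\mathfrak m_j}$'', but this is false: the prefactor $\zeta(\mathcal Q)=\prod_{\mathfrak n}i\zeta_{\mathfrak n}$ also changes, since a unit twist at $\mathfrak n_j$ flips $\zeta_{\mathfrak n_j}$. In the phase this flip is harmless (you correctly note $\zeta_{\mathfrak n_j}\Omega_{\mathfrak n_j}$ is invariant), but in the prefactor it produces an extra $(-1)$. Hence the sum over twist/no-twist at level $j$ gives not a pair $\{n_{\mathrm{in}}(k_{\mathfrak p_j}),\,n_{\mathrm{in}}(k_{\mathfrak p_j}+h)\}$ but the \emph{difference} $n_{\mathrm{in}}(k_j-h)-n_{\mathrm{in}}(k_j)$. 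This is the entire cancellation mechanism; the gain then follows from the smoothness and decay of $n_{\mathrm{in}}$ via
\[
\frac{\alpha T}{L}\sum_{k\in\Z_L}\bigl|n_{\mathrm{in}}(k-h)-n_{\mathrm{in}}(k)\bigr|\;\lesssim\;L^\theta\,\alpha T\,|h|\;\lesssim\;\alpha T^{1/2},
\]
using only $|h|\le T^{-1/2}$. Your alternative route through Lemma~\ref{lem-sg2vc} does not apply: once the gap $h$ is fixed by $\mathscr E^{\mathrm{sp}}$, each chain variable $k_j$ is a \emph{free} lattice parameter with no resonance constraint of its own, so there is no $(1^+,2^+)$ counting problem to invoke. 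Likewise your remark that the chain phases ``do not oscillate'' is unnecessary (and not uniformly true over the free sum in $k_j$); the paper simply bounds each phase by $1$ and the simplex volume by $1$.
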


\begin{proof}
For simplicity throughout, let us use the notation of Figure \ref{fig-irregular-chain} and assume that $\Q$ is the element of $\textgoth{Q}_{\mathcal M}$ where all $\zeta_{\mathfrak n_j} (j = 0, \ldots, q)$ have the same sign. We fix $k_{\mathfrak n}$ and $t_{\mathfrak n}$ for $\mathfrak n \notin \{\mathfrak n_i, \mathfrak p_i, \mathfrak m_i\} (1 \leq i \leq q)$ and  sum and integrate in the remaining values, setting $k_{\mathfrak n_i} = k_i$, $k_{\mathfrak p_i} = k_{\mathfrak m_i} = \ell_i$, and $t_i = t_{\mathfrak n_i} (0 \leq i \leq q)$. For convenience, we set $t_{q+1} = \text{max}(t_{\mathfrak n_0^{11}}, t_{\mathfrak n_0^{12}})$ in addition to $k_{q+1} = k_{\mathfrak n_0^{11}},$ and $\ell_{q+1} = k_{\mathfrak n_0^{12}}$. Throughout, we set (for $0 \leq j \leq q$)
\begin{align*}
\Omega_j = |k_{j+1}|^\sigma - |\ell_{j+1}|^\sigma + |\ell_{j}|^\sigma - |k_{j}|^\sigma
\end{align*}
and 
\begin{align*}
\Omega = \sum_{j = 0}^q \Omega_{j} = |k_{q+1}|^\sigma - |\ell_{q+1}|^\sigma + |\ell_0|^\sigma - |k_0|^\sigma.
\end{align*}
Also, note that $k_0, \ell_0, k_{q+1}, \ell_{q+1}$ are fixed with $$k_0 - \ell_0 = k_1 - \ell_1 = \ldots = k_q - \ell_q = k_{q+1} - \ell_{q+1} := h.$$ Each $\mathcal Q' \in \textgoth{Q}_{\mathcal M}$ will have a similar corresponding expression for $\mathcal K_{Q'}$, with the only difference occurring in the variables which are not fixed. This part we isolate as 
\begin{align*}
& \left( \frac{\alpha T}{L}\right)^q\sum\limits_{\substack{k_i, \ell_i \in \Z_L \\ k_{i} - \ell_{i} + \ell_{i-1} = k_{i-1} \\ i = 1, \ldots q}} \int\limits_{t_{q+1} < t_q < \ldots < t_1 < t_0} \left(\prod_{j = 1}^q i\zeta_{\mathfrak n_j}\right) \left(\prod_{j = 0}^q\epsilon_{k_{j+1}\ell_{j+1}m_{j}}\right)  \nonumber \\
& \hspace{4.5cm}\times \left( \prod_{j = 0}^q e^{\zeta_{\mathfrak n_j} 2 \pi i T t_j \Omega_j}\right)\left( \prod_{j = 1}^q n_{\mathrm{in}}(m_j)\right) \mathrm d t_1 \ldots \mathrm d t_q,
\end{align*}
where $m_j = \ell_j$ if no unit twist has been performed at $\mathfrak n_j$ to get from $\mathcal Q$ to $\mathcal Q'$ and $m_j = k_j$ otherwise. We may assume that  $\epsilon_{k_{j+1}\ell_{j+1}m_j}= 1$ as $h = 0$ admits much better estimates. A unit twist at $\mathfrak n_j$ changes the sign of $\zeta_{\mathfrak n_j}$ 
but not the expression for $\zeta_{\mathfrak n_j}\Omega_j$, so summing over $\mathcal Q' \in \textgoth Q_{\mathcal M}$ amounts to summing over all choices of $\zeta_{\mathfrak n_j}$ for $j = 1, \ldots, q$, yielding the expression 
\begin{align*}
& \hspace{.4cm}\left( \frac{\alpha T}{L}\right)^q\sum\limits_{\substack{\substack{k_i \in \Z_L, \\ i = 1, \ldots q}}} \hspace{.2cm}\int\limits_{t_{q+1} < t_q < \ldots < t_1 < t_0}  \left( \prod_{j = 0}^q e^{2 \pi i T t_j \Omega_j}\right)\left( \prod_{j = 1}^q n_{\mathrm{in}}(k_j - h) - n_{\mathrm{in}}(k_j)\right) \mathrm d t_1 \ldots \diff t_q\\
&= e^{2\pi i T t_0 \Omega} \sum\limits_{\substack{\substack{k_i \in \Z_L, \\ i = 1, \ldots q}}} \hspace{.2cm}\int\limits_{t_{q+1} < t_q < \ldots < t_1 < t_0} \left( \frac{\alpha T}{L}\right)^q \left( \prod_{j = 1}^q e^{2\pi i T(t_j - t_0)\Omega_j} ( n_{\mathrm{in}}(k_j - h) - n_{\mathrm{in}}(k_j))\right) \mathrm d t_1 \ldots \diff t_q \\
&= e^{2\pi i T t_0 \Omega} \int\limits_{0 < s_1 < \ldots < s_q < t_0 - t_{q+1}}  \left( \frac{\alpha T}{L}\right)^q  \sum\limits_{\substack{\substack{k_i \in \Z_L, \\ i = 1, \ldots q}}} \left( \prod_{j = 1}^q e^{-2\pi i Ts_j \Omega_j}\left(n_{\mathrm{in}}(k_j - h) - n_{\mathrm{in}}(k_j)\right) \diff s_j\right) \\
& = e^{2\pi i T t_0 \Omega} \int\limits_{\varsigma \in \left[0, t_0 - t_{q+1}\right]} P_q(\varsigma, k[\mathcal N^{\mathrm{sp}}]) \diff\varsigma,
\end{align*}
where 
\begin{align}
P_q(\varsigma, k[\mathcal N^{\mathrm{sp}}]) &:= \left( \frac{\alpha T}{L}\right)^q \sum\limits_{\substack{\substack{k_i \in \Z_L, \\ i = 1, \ldots q}}} \hspace{.2cm} \int\limits_{0 < s_1 < \ldots < s_{q-1} < \varsigma}   \left( \prod_{j = 1}^q e^{-2\pi i Ts_j \Omega_j}\right)  \\
&\hspace{3cm}\times \prod_{j = 1}^q\left(n_{\mathrm{in}}(k_j - h) - n_{\mathrm{in}}(k_j)\right)\diff s_1 \ldots \diff s_{q-1}. \nonumber
\end{align}
To analyze $P_q(\varsigma, k[\mathcal N^{\mathrm{sp}}])$, note that for any $\theta > 0$,
\begin{align*}
\left(\frac{\alpha T}{L}\right) \sum_{k \in \Z_L} \left| \left(n_{\mathrm{in}}(k - h) - n_{\mathrm{in}}(k) \right)\right| &\lesssim L^\theta \alpha T h, 
\end{align*}
using the decay of $n_{\mathrm{in}}$, establishing (\ref{eq-pestimate}). 

Returning to the expression of $\K_{\textgoth Q_{\mathcal M}}(t,s,k)$ (now no values of $k_{\mathfrak n}$ or $t_{\mathfrak n}$ are fixed), we will write it in terms of the spliced couple $\mathcal Q^{\mathrm{sp}}$ and its corresponding decorations ($\mathscr E^{\mathrm{sp}}$), domain of integration $\mathcal E^{\mathrm{sp}}$ and its nodes and leaves ($\mathcal N^{\mathrm{sp}}$ and $\mathcal L^{\mathrm{sp}}$): 
\begin{align*}
\K_{\textgoth{Q}_\mathcal M} (t,s,k) &= \left( \frac{\alpha T}{L}\right)^{n_{\mathrm{sp}}} \zeta(\mathcal Q^{\mathrm{sp}}) \sum_{\mathscr E^{\mathrm{sp}}} \tilde\epsilon_{\mathscr E^{\mathrm{sp}}} \int_{\mathcal{E}^{\mathrm{sp}}} \left( \prod_{\mathfrak n \in \mathcal N^{\mathrm{sp}}}e^{\zeta_{\mathfrak n} 2 \pi i Tt_{\mathfrak n}\Omega_{\mathfrak n}} \diff t_{\mathfrak n} \right) \\
& \hspace{3cm} \times \int\limits_{0 \leq \varsigma \leq (t_{\mathfrak n_0} - \max(t_{\mathfrak n_0^{11}}, t_{\mathfrak n_0^{12}}))} P_q(\varsigma, k[\mathcal N^{\mathrm{sp}}]) \diff \varsigma \\
&=\left( \frac{\alpha T}{L}\right)^{n_{\mathrm{sp}}} \zeta(\mathcal Q^{\mathrm{sp}})  \int _0^1 \diff \varsigma \sum_{\mathscr E^{\mathrm{sp}}} \tilde \epsilon_{\mathscr E^{\mathrm{sp}}} \int_{\mathcal{E}^{\mathrm{sp}}_\sigma} \left( \prod_{\mathfrak n \in \mathcal N^{\mathrm{sp}}}e^{\zeta_{\mathfrak n} 2 \pi i Tt_{\mathfrak n}\Omega_{\mathfrak n}} \diff t_{\mathfrak n} \right) \\
& \hspace{5cm}\times P_q(\varsigma, k[\mathcal N^{\mathrm{sp}}]) \prod^+_{\mathfrak l \in \mathcal L^{\mathrm{sp}}} n_{\mathrm{in}}(k_{\mathfrak l}),
\end{align*}
for $\mathcal{E}^{\mathrm{sp}}_\varsigma$ the same as $\mathcal E^{\mathrm{sp}}$ except now also $t_{\mathfrak n_0} \geq t_{\mathfrak n_0^{11}} + \varsigma, t_{\mathfrak n_0^{12}} + \varsigma$. Note also that when we splice, it may be the case that $\{k_0, \ell_{q_1}\} = \{k_{q+1}, \ell_0\}$, so we modify $\epsilon_{\mathscr E^{\mathrm{sp}}}$ so that $\epsilon_{k_{q+1} \ell_{q+1} \ell_0}$ is also 1 if $\ell_0 = \ell_{q+1}$. 
\end{proof}
\subsection{Splicing specifications}
Now that Lemma \ref{lem-splice} allows us to splice out one SG irregular chain from a couple $\Q$, we generalize this to multiple irregular chains. We also specify a set $\mathcal M$ of twist-admissible nodes (and corresponding irregular chains) we intend to splice, and write the expression for $\K_{\textgoth{Q}_{\mathcal M}}$. 

\begin{lemma}\label{lem-chains}
For any molecule $\M$, there is a unique collection $\mathscr C$ of disjoint atomic groups, such that each atomic group in $\mathscr C$ is a negative chain, negative hyperchain, or negative pseudo-hyperchain and any negative chain $\C$ of $\M$ is a subset of precisely one atomic group in $\mathscr C$.

Furthermore, for any subset $\mathscr C_1 \subset \mathscr C$, there is a unique collection $\mathscr D_{\mathscr C_1}$ of disjoint atomic groups, such that each atomic group in $\mathscr D_{\mathscr C_1}$ is a maximal negative wide ladder of $\mathscr C_1$ and each chain $\C \in \mathscr C_1$ is a subset of precisely one atomic group in $\mathscr D_{\mathscr C_1}$.
\end{lemma}
\begin{proof}
Consider all maximal negative chains in $\M$. If they are also hyperchains or pseudo-hyperchains, then we consider them as such. This collection is $\mathscr C$. Suppose that $\C_1 \neq \C_2 \in \mathscr C$ and their intersection is nontrivial. If at least one is a hyperchain or pseudo-hyperchain, $ \mathcal D = \C_1  \cup \C_2$ cannot be a chain, hyperchain, or pseudo-hyperchain. If each are chains, then $\C_1$ is connected to an atom in $\C_2$ via double bond, and is thus not maximal.

Now let us consider $\mathscr D_{\mathscr C_1}$ to be the set of all maximal negative wide ladders in $\mathscr C_1$. Note that a maximal wide ladder may contain a single rung. Suppose that $\mathcal L_1 \neq \mathcal L_2 \in \mathscr D_{\mathscr C_1}$ have nontrivial intersection. Therefore, there is a chain $\mathcal C =  (v_0, \ldots, v_{q}) \in \mathcal L_1, \mathcal L_2$ such that $v_0, v_q$ are connected to a chain in $\mathcal L_2$ and $\mathcal L_2$, so that neither is a maximal negative wide ladder of $\mathscr C_1$. Similarly any $\mathcal C \in \mathscr C_1$ is maximal and therefore  part of a single maximal wide ladder. 
\end{proof}

\begin{definition}\label{def-M}
Consider a couple $\mathcal Q$ and let $\mathscr C$ be defined as in Lemma \ref{lem-chains} for $\M(\Q)$. Suppose also that we have chosen $\mathscr C_1 \subset \mathscr C$ of all SG negative chain-like objects. We define the set $\mathcal M_{\mathscr C_1}$ below. Consider the set $\mathscr D_{\mathscr C_1}$ defined in Lemma \ref{lem-chains}. If $\mathcal L \in \mathscr D_{\mathscr C_1}$ consists of a single chain $\C = (v_0, \ldots, v_q) \in \mathscr C_1$:
\begin{enumerate}
\item If $\C$  is a chain, we include in $\mathcal M$ all $\mathfrak n(v_i)$ which are admissible.
\item If $\C$ is a hyperchain or pseudo-hyperchain with a CN double bond, we include in $\mathcal M$ all $\mathfrak n(v_i)$ which are admissible. If there is no CN double bond, we exclude one admissible $\mathfrak n(v_i)$. 
\end{enumerate}

Otherwise if $\mathcal L = \{\mathcal C_1, \ldots, \mathcal C_m\}$ with chain $j$ having length $q^{(j)}$, perform the following:
\begin{enumerate}[(a)]
\item Starting with $\mathcal C_1$ determine the nodes to be added to $\mathcal M$ by performing the above, considering it as a hyperchain or pseudo-hyperchain if it is.
\item For $\mathcal C_{j + 1}$ and $j \leq m - 2$, if there are $q^{(j)}$ nodes of $\mathcal C_j$ contained in $\mathcal M$, treat $\mathcal C_{j + 1}$ as to point (2) above in determining which nodes to add to $\mathcal M$. Otherwise, treat it according to (1). 
\item For $\mathcal C_{m}$, if it is a hyperchain or pseudo-hyperchain or there are $q^{(m-1)}$ nodes of $\mathcal C_{m-1}$ in $\mathcal M$, treat $\mathcal C_m$ according to (2). Otherwise, treat it according to (1). 
\end{enumerate}
\end{definition}

\begin{proposition}\label{prop-stage1}
For a couple $\mathcal Q$, and choice of SG negative chain-like objects $\mathscr C_1$ in $\M(\Q)$, consider the resulting couple $\mathcal Q^{\mathrm{sp}}$ obtained by splicing $\mathcal Q$ at the nodes in $\mathcal M = \mathcal M_{\mathscr C_1}$. Compared to $\mathbb M = \mathbb M(\mathcal Q)$, $\mathbb M^{\mathrm{sp}} = \mathbb M(\mathcal Q^{\mathrm{sp}})$ has the following properties: 
\begin{enumerate}[(a)]
\item The molecule $\mathbb M^{\mathrm{sp}}$ is connected and has either 2 atoms of degree 3 or one atom of degree 2, with the rest having degree 4.  
\item For elements of $\mathscr C_1$, each SG chain is reduced to a single atom or a single CN double bond in $\mathbb M^{\mathrm{sp}}$. Similarly, each SG hyperchain is reduced to a triple bond and each SG pseudo-hyperchain is reduced to a single double bond (either CL or CN) pseudo-hyperchain. 
\item The only SG CL double bonds in $\mathbb M^{\mathrm{sp}}$ are part of pseudo-hyperchains. 
\item Any degenerate atom of $\mathbb M^{\mathrm{sp}}$ with self-connections is fully degenerate. 
\end{enumerate}
Also, 
\begin{align} \label{eq-splice-exp}
\begin{split}
\K_{\textgoth{Q}_\mathcal M} (t,s,k) &= \left( \frac{\alpha T}{L}\right)^{n_{\mathrm{sp}}} \zeta(\mathcal Q^{\mathrm{sp}}) \int _{\left[ 0,1\right]^{\mathcal N_0^{\mathrm{sp}}}} \mathrm d \pmb{\varsigma} \sum_{\mathscr E^{\mathrm{sp}}} \tilde \epsilon_{\mathscr E^{\mathrm{sp}}} \int_{\mathcal{E}^{\mathrm{sp}}_{\pmb{\varsigma}}} \left( \prod_{\mathfrak n \in \mathcal N^{\mathrm{sp}}}e^{\zeta_{\mathfrak n} 2 \pi i Tt_{\mathfrak n}\Omega_{\mathfrak n}} \diff t_{\mathfrak n} \right)  \\
& \hspace{5cm}\times \prod_{\mathfrak n \in \mathcal N^{\mathrm{sp}}_0} P_{q_{\mathfrak n}}(\varsigma_{\mathfrak n}, k[\mathcal N^{\mathrm{sp}}]) \prod^+_{\mathfrak l \in \mathcal L^{\mathrm{sp}}} n_{\mathrm{in}}(k_{\mathfrak l}),
\end{split}
\end{align}
where $\mathcal N_0^{\mathrm{sp}}$ are the nodes at which an irregular chain was spliced out and $P_{q_{\mathfrak n}}$ is given in Lemma \ref{lem-splice} and $q_{\mathfrak n}$ is the length or irregular chain spliced out below $\mathfrak n$. Additionally, $\mathcal{E}^{\mathrm{sp}}_{\pmb{\varsigma}} = \mathcal E \cap \{t_{\mathfrak n} \geq t_{\mathfrak n^{11}} + \varsigma_{\mathfrak n}, t_{\mathfrak n^{12}} + \varsigma_{\mathfrak n}\}_{\mathfrak n \in \mathcal N_0^{\mathrm{sp}}}$. 
\end{proposition}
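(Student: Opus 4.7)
The plan is to prove Proposition \ref{prop-stage1} by iterating Lemma \ref{lem-splice} over the disjoint chain-like objects in $\mathscr{C}_1$, and then to verify the structural claims (a)--(d) by a case analysis tracking how splicing modifies the local geometry of $\mathbb{M}$.

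First I would observe that since the chain-like objects in $\mathscr{C}$ are pairwise disjoint by Lemma \ref{lem-chains}, the set $\mathcal{M}=\mathcal{M}_{\mathscr{C}_1}$ decomposes as a disjoint union $\bigsqcup_{\mathcal{C}\in\mathscr{C}_1}\mathcal{M}_{\mathcal{C}}$. For each $\mathcal{C}\in\mathscr{C}_1$, the nodes in $\mathcal{M}_{\mathcal{C}}$ form one or two maximal CL irregular sub-chains in the sense of Definition \ref{def-chain-couple}: when $\mathcal{C}$ is a pure chain we take all admissible nodes; when $\mathcal{C}$ is a hyperchain or pseudo-hyperchain with a CN double bond, Proposition \ref{prop-CN} forces that bond to be unique and to split the chain into two CL sub-chains, both of which we spliceable; when $\mathcal{C}$ is a hyperchain or pseudo-hyperchain with only CL double bonds, the excluded admissible node in Definition \ref{def-M} plays the analogous role of splitting $\mathcal{C}$. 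In all cases, all gaps in a sub-chain coincide (Proposition~8.3 of \cite{WKE}), so the SG hypothesis of Lemma \ref{lem-splice} is inherited from the hypothesis that $\mathcal{C}$ is SG. Since splicing at disjoint admissible nodes commutes, iterating Lemma \ref{lem-splice} one irregular sub-chain at a time produces one factor $P_{q_{\mathfrak n}}$ and one free integration variable $\varsigma_{\mathfrak n}\in[0,1]$ per sub-chain, yielding (\ref{eq-splice-exp}) with the $\pmb{\varsigma}$-constraint on $\mathcal{E}^{\mathrm{sp}}_{\pmb{\varsigma}}$ obtained as the intersection of the individual constraints in (\ref{eq-sig-setE}).

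Next I would turn to the structural claims. For (a), splicing at a CL double bond removes an admissible subtree and regrafts specific grandchildren onto the spliced node; this leaves the two roots (hence the degree-$3$/degree-$2$ atoms) untouched, preserves connectedness of $\mathbb{M}^{\mathrm{sp}}$, and keeps all remaining atoms of degree $4$ since splicing only swaps existing pairings without introducing new endpoints. For (b), I analyze each $\mathcal{C}=(v_0,\dots,v_q)\in\mathscr{C}_1$ locally: a pure chain collapses to a single atom; a hyperchain collapses to a triple bond at one atom (or to a CN double bond plus a single bond if a CN bond is present); a pseudo-hyperchain collapses to either a single CL or a single CN double-bonded pseudo-hyperchain, the excluded admissible node in the CL-only case being what produces the leftover CL double bond. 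Claim (c) is then immediate: an SG CL double bond in $\mathbb{M}^{\mathrm{sp}}$ cannot belong to $\mathscr{C}\setminus\mathscr{C}_1$ (those are LG by construction), nor to a spliced pure chain or hyperchain (which collapse away all CL bonds), so it must be the residual bond in a spliced pseudo-hyperchain. For (d), a self-connection at an atom $v$ of $\mathbb{M}^{\mathrm{sp}}$ can only be created by splicing when the two ends of a regrafted pair collapse onto $v$ with identical decoration; in that situation the conservation relation (\ref{eq-mol-k}) at $v$ together with the pre-existing equalities forces all bonds at $v$ to share the same $k$-label, i.e.\ $v$ is fully degenerate.

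The main obstacle I expect is the careful bookkeeping in step two of the case analysis: one has to verify precisely which single bonds and which CN double bonds survive splicing at the boundary of each chain-like object, check that the signs and gap values are compatible so that Lemma \ref{lem-splice} applies on each sub-chain, and confirm that the modified sign/degeneracy factor $\tilde\epsilon_{\mathscr{E}^{\mathrm{sp}}}$ correctly absorbs the borderline case $\ell_0=\ell_{q+1}$ from the proof of Lemma \ref{lem-splice}. Once the local picture near each $\mathcal{C}\in\mathscr{C}_1$ is understood, (a)--(d) and (\ref{eq-splice-exp}) follow by direct inspection.
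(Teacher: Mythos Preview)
Your overall approach—iterating Lemma~\ref{lem-splice} over the disjoint irregular sub-chains arising from each $\mathcal{C}\in\mathscr{C}_1$, then verifying (a)--(d) by local inspection—is the same as the paper's, and your handling of the formula \eqref{eq-splice-exp} and of (a)--(c) is essentially correct. One minor slip in (b): a pure chain containing a CN double bond collapses to that CN double bond, not to a single atom, since CN bonds are never admissible; you note this for hyperchains but omit it for chains.

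Your argument for (d), however, has a real gap. You assert that when splicing creates a self-connection at $v$, the conservation relation \eqref{eq-mol-k} ``together with the pre-existing equalities'' forces all bonds at $v$ to carry the same $k$-label. This does not follow: at a degree-$4$ atom with a self-loop carrying label $k_\ell$, conservation only forces the remaining two bonds to satisfy $\pm k_a\pm k_b=0$, which does not pin $k_a$ to $k_\ell$. The paper's argument is structurally different. The modified factor $\tilde\epsilon_{\mathscr{E}^{\mathrm{sp}}}$ relaxes the nondegeneracy constraint \emph{only} at atoms $\mathfrak{n}\in\mathcal{N}_0^{\mathrm{sp}}$, so the question reduces to showing that no such $\mathfrak{n}$ acquires a self-connection after splicing. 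In the original molecule $\mathfrak{n}$ has two of its three children committed to the irregular chain, hence no self-loop there; a self-loop created by splicing would require a child of $\mathfrak{n}_q$ to be paired with the remaining child of $\mathfrak{n}$, which means $v_0$ and $v_q$ were already joined by a single bond in $\mathbb{M}(\mathcal{Q})$, i.e.\ the chain was a hyperchain. But Definition~\ref{def-M} deliberately leaves one double bond of a hyperchain unspliced, so after splicing $v_0$ and the surviving atom remain distinct and are joined by a triple bond rather than collapsing to a self-loop. This is the mechanism that makes (d) work, and it is not captured by your conservation argument.
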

\begin{proof}
First, (a) directly follows from the fact that splicing retains the couple with a tree structure. Also, (b) and (c) follow from Definition \ref{def-M} using the fact that splicing a rung of a wide ladder can cause another chain to become a pseudo-hyperchain. To see (d), note that Lemma \ref{lem-splice} alters $\epsilon_{\mathscr E^{\mathrm{sp}}}$ to allow degeneracies at atoms in $\mathcal N_0^{\mathrm{sp}}$. However, an atom $\mathfrak n \in \mathcal N_0^{\mathrm{sp}}$ may not have a self-connection in $\M$ as two of its children are involved in an irregular chain. So, a self-connection at $\mathfrak n$ would need to be created as part of splicing. This would imply that the last node in the irregular chain has an additional child paired to a child of $\mathfrak n$ and thus that it was a hyperchain in $\M(\Q)$. However, we do not splice out all double bonds in a hyperchain and leave one creating a triple bond. Finally, the nodes in $\mathcal M$ form groups of disjoint irregular chains with each element of $\mathscr C_1$ corresponding to at most 2 disjoint irregular chains. As these do not effect each other, we may apply Lemma \ref{lem-splice} to obtain (\ref{eq-splice-exp}). 
\end{proof}

\section{Reduction to Stage 2}
\label{sec-red}
Here, we reduce Proposition \ref{prop-couples} to the following:  

\begin{proposition}\label{prop-rigidity}
Consider a labelled molecule $\mathbb M = \mathbb M(\mathcal Q)$ of order $n\leq N^3$. Suppose we fix $k \in \Z_L$ as well as $k_\ell^0 \in \Z_L$ and $\beta_v \in \R$ for each bond $\ell$ and atom $v$ of $\mathbb M$ such that if a bond $\ell$ is LP, $|k_\ell^0| \leq D$, and for each $v$, there are at least two $\ell \sim v$ such that $|k_\ell^0| \leq D$. Consider all $k$-decorations $(k_\ell)$ of $\M$ such that 
\begin{enumerate}[(i)]
\item The $k$-decoration $(k_\ell)$ is inherited from a $k$-decoration $\mathscr E$of $\mathcal Q$ that satisfies the SG and LG assumptions in Proposition \ref{prop-stage1} as well as non-degeneracy conditions $\tilde \epsilon_{\mathscr E}.$
\item The decoration is restricted by $(\beta_v)$ and $(k_{\ell}^0)$ in the sense that $|\Omega_v - \beta_v| \leq T^{-1}$ and $|k_\ell - k_\ell^0| \leq 1$. 
\end{enumerate}
Then, the number $\mathfrak D$ of such $k$-decorations is bounded by 
\begin{equation}\label{eq-counting}
\mathfrak D \leq C^n L^{n(1 + \theta)}T^{-\frac{n}{5} - \frac{3}{5}}D^{n(2 - \sigma)}.
\end{equation}
\end{proposition}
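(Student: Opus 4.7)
The plan is to reduce Proposition \ref{prop-rigidity} to the algorithmic counting scheme of Section \ref{sec-algorithm} together with the two-vector counting bound in Proposition \ref{prop-2vc-bound}. The algorithm dismantles $\M$ step by step; at each step it removes an atomic group and prices the corresponding sub-decoration via one of the estimates in Proposition \ref{prop-vc}, so that $\mathfrak D$ is bounded by the product of the step-wise counts $\mathfrak C$. Assumption (ii) on $(k_\ell^0)$ ensures that the size hypotheses of Proposition \ref{prop-vc} are satisfied, and the order of processing is chosen so that the ``at least two bonds with $|k_\ell^0| \leq D$'' property persists on the remaining molecule throughout; the non-degeneracy condition $\tilde\epsilon_{\mathscr E}$ and the post-splicing structure from Proposition \ref{prop-stage1} guarantee that no fully-degenerate atom or stray SG CL double bond obstructs the reduction.

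Let $A$ be the number of steps using the three-vector estimate $(1^+,2^-,3^+)$ of Proposition \ref{prop-vc}, each contributing $\mathfrak C \lesssim L^{2+\theta} T^{-1} D^{2-\sigma}$ with $\Delta\chi = -2$, and let $B$ be the number of steps using a two-vector estimate, contributing at worst $\mathfrak C \lesssim L$ with $\Delta\chi = -1$ (the improved two-vector bounds only help). Since the initial molecule satisfies $\chi = E - V + F = (2n-1) - n + 1 = n$ and the algorithm terminates at $\chi = 0$, we have the balance $2A + B = n$. Multiplying the step-wise bounds and using $D \gtrsim 1$ to absorb $D^{(2-\sigma)A} \leq D^{(2-\sigma)n}$ gives
\begin{equation}
\mathfrak D \leq C^n L^{n(1+\theta)} T^{-A} D^{n(2-\sigma)}.
\end{equation}

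The main obstacle is the lower bound on $A$, equivalently an upper bound on the number of ``bad'' two-vector countings. This is exactly Proposition \ref{prop-2vc-bound}, which after Stage~1 splicing yields $B \leq 3(A-1)$. Combining with $2A + B = n$ forces $5A \geq n + 3$, hence $A \geq (n+3)/5$ and therefore $T^{-A} \leq T^{-n/5 - 3/5}$, which plugged back into the previous display recovers \eqref{eq-counting}.

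The hard work, deferred to Section \ref{sec-opbound}, is the bound $B \leq 3(A-1)$ itself. Structurally it is a charging argument: one builds an operation tree tracking the partial order in which atoms are processed, and constructs a map assigning to each two-vector counting step a distinguished three-vector counting step, such that no three-vector step is the image of more than three two-vector steps. The construction depends on the exact post-splicing structure -- in particular the fact that SG CL double bonds appear only inside pseudo-hyperchains and that all self-connecting atoms are fully degenerate (parts (c)--(d) of Proposition \ref{prop-stage1}) -- since these are what ensure that the improved two-vector estimates of Proposition \ref{prop-vc} (and hence the ``good'' 2-vector steps that are not charged) are available precisely where they are needed.
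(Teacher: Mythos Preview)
Your reduction is correct and matches the paper's own proof essentially line for line: run the algorithm of Section~\ref{sec-algorithm}, use the $\chi$-balance $2A+B=n$, invoke Proposition~\ref{prop-2vc-bound} to get $B\le 3(A-1)$ and hence $A\ge (n+3)/5$, then multiply the step-wise counting bounds from Proposition~\ref{prop-vc}. One small inaccuracy in your closing paragraph: the charging map $\Phi$ in Section~\ref{sec-opbound} is actually \emph{injective} into a target set $T_3$ that contains both three-vector and bridge operation tuples (bridges being recorded three times each), not a three-to-one map into three-vector steps; the factor of three enters via the bound $m_0\le m_3-1$ from Proposition~\ref{prop-bridge-op}, and you should also flag, as the paper does, that the simplifying assumptions of Section~\ref{subsec-assumptions} exclude some molecules covered by Proposition~\ref{prop-rigidity}, with the missing cases patched in Remark~\ref{rmk-remove-assumptions}.
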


We first record the following Lemma, proved in Lemma 10.2 of \cite{WKE}.

\begin{lemma}\label{lem-osc-int}
Let 
\begin{equation}
B(t,s,\lambda[\mathcal N^{\mathrm{sp}}], \pmb \varsigma):= \int_{\mathcal{E}^{\mathrm{sp}}_{\pmb{\varsigma}}} \prod_{\mathfrak n \in \mathcal N^{\mathrm{sp}}}e^{\zeta_{\mathfrak n} 2 \pi i t_{\mathfrak n}\lambda_{\mathfrak n}} \diff t_{\mathfrak n},
\end{equation}
where $\mathcal{E}^{\mathrm{sp}}_{\pmb{\varsigma}}$ is defined in Proposition \ref{prop-stage1} and $\lambda[\mathcal N^{\mathrm{sp}}] = \{\lambda_{\mathfrak n}\}_{\mathfrak n \in \mathcal N^{\mathrm{sp}}}$. Then, 
\begin{align} \label{eq-osc-int}
\left|B(t,s,\lambda[\mathcal N^{\mathrm{sp}}], \pmb \varsigma) \right|\leq C^{n_{\mathrm{sp}}} \sum_{d_\mathfrak n} \prod_{\mathfrak n \in \mathcal N^{\mathrm{sp}}} \frac{1}{\langle q_{\mathfrak n}\rangle}, 
\end{align}
for $d_{\mathfrak n} \in \{0,1\}$ and $q_{\mathfrak n}$ defined inductively as $q_{\mathfrak n} = \zeta_{\mathfrak n}\lambda_{\mathfrak n} + \zeta_{\mathfrak n_1}d_{\mathfrak n_1}q_{\mathfrak n_1} + \zeta_{\mathfrak n_2}d_{\mathfrak n_2}q_{\mathfrak n_2} + \zeta_{\mathfrak n_3}d_{\mathfrak n_3}q_{\mathfrak n_3}$, where $\mathfrak n_1, \mathfrak n_2, \mathfrak n_3$ are the three children of $\mathfrak n$ and $q_{\mathfrak l} = 0$ for leaf $\mathfrak l$.
\end{lemma}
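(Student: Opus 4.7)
The plan is to prove \eqref{eq-osc-int} by induction on subtree size, processing $\mathcal N^{\mathrm{sp}}$ from leaves up to the root. The driving identity is the elementary computation $\int_0^{t'} e^{2\pi i s\mu}\,\diff s = (e^{2\pi i t'\mu}-1)/(2\pi i\mu)$, whose two pieces --- the constant $-1$ and the propagating $e^{2\pi i t'\mu}$ --- each have modulus $\leq C/\langle\mu\rangle$. The binary index $d_{\mathfrak n}\in\{0,1\}$ in the statement of the lemma will record, at each node, which of these two pieces is selected when $t_{\mathfrak n}$ is integrated out: $d_{\mathfrak n}=0$ for the boundary piece (the $-1$, which does not propagate any phase upward), $d_{\mathfrak n}=1$ for the propagating piece.

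Concretely, for each branching node $\mathfrak n\in\mathcal N^{\mathrm{sp}}$, let $\mathcal N_{<\mathfrak n}$ denote its proper branching descendants, and let $B_{\mathfrak n}(t)$ be the iterated integral of $\prod_{\mathfrak m\in\mathcal N_{<\mathfrak n}} e^{2\pi i\zeta_{\mathfrak m}t_{\mathfrak m}\lambda_{\mathfrak m}}$ over the time-ordered region lying strictly below $t_{\mathfrak n}=t$ (incorporating the $\varsigma$-shifts at the distinguished nodes). The induction hypothesis I would maintain is
\[
B_{\mathfrak n}(t) = \sum_{d[\mathcal N_{<\mathfrak n}]} c_{\mathfrak n}^{(d)}\, e^{2\pi i t\beta_{\mathfrak n}^{(d)}},\qquad \beta_{\mathfrak n}^{(d)} = \sum_{\mathfrak n_i\text{ child of }\mathfrak n} \zeta_{\mathfrak n_i} d_{\mathfrak n_i} q_{\mathfrak n_i},
\]
with $|c_{\mathfrak n}^{(d)}|\leq C^{|\mathcal N_{<\mathfrak n}|}\prod_{\mathfrak m\in\mathcal N_{<\mathfrak n}} \langle q_{\mathfrak m}\rangle^{-1}$, where the $q_{\mathfrak m}$'s are exactly those produced by the recursion in the statement of the lemma for the chosen tuple $d$. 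The base case is when all immediate children of $\mathfrak n$ are leaves, so that $\mathcal N_{<\mathfrak n}=\emptyset$ and $B_{\mathfrak n}\equiv 1$ (a single empty $d$, empty product, $\beta=0$).

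For the inductive step, the crucial observation is that the subtrees below $\mathfrak n$ integrate independently, constrained only by $t_{\mathfrak n_i}<t_{\mathfrak n}$ (or $t_{\mathfrak n_i}<t_{\mathfrak n}-\varsigma_{\mathfrak n}$ at a splicing node). So $B_{\mathfrak n}(t)$ factors as a product over branching children of one-variable integrals of the form $\int_0^{t'} B_{\mathfrak n_i}(s)\, e^{2\pi i s\zeta_{\mathfrak n_i}\lambda_{\mathfrak n_i}}\,\diff s$. Plugging in the inductive expansion of $B_{\mathfrak n_i}$, the accumulated exponent $\beta_{\mathfrak n_i}^{(d_i)} + \zeta_{\mathfrak n_i}\lambda_{\mathfrak n_i}$ equals exactly $q_{\mathfrak n_i}$ by the defining recursion, so the elementary identity produces two terms indexed by $d_{\mathfrak n_i}\in\{0,1\}$, each carrying an extra $C/\langle q_{\mathfrak n_i}\rangle$ factor. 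A $\varsigma$-shift at most inserts a unimodular factor $e^{-2\pi i \varsigma_{\mathfrak n}q_{\mathfrak n_i}}$, which is absorbed harmlessly into the coefficient. Taking the product over children and reindexing the combined tuple $d$ recovers the hypothesis at $\mathfrak n$.

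A final integration at the root(s) of $\mathcal Q^{\mathrm{sp}}$ is handled identically --- each root contributes one more $\langle q_{\mathfrak r}\rangle^{-1}$ factor --- and yields \eqref{eq-osc-int}. The only substantive bookkeeping is verifying that the accumulated exponents $q_{\mathfrak m}$ generated by the induction agree termwise with the recursion stated in the lemma, and that the $\varsigma$-shifts truly contribute only unit-modulus phases; I do not anticipate any genuine obstacle beyond this check, since the factorization over independent subtrees together with the binary dichotomy from the basic oscillatory identity already matches the structure of the claimed bound exactly.
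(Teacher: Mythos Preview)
Your proposal is correct and is precisely the standard inductive argument underlying Lemma~10.2 of \cite{WKE}, which is all the paper invokes for this lemma. The only point to watch in the bookkeeping you flag is the placement of the signs $\zeta_{\mathfrak n_i}$: the propagating phase from child $\mathfrak n_i$ emerges from your integration as $e^{2\pi i t_{\mathfrak n} q_{\mathfrak n_i}}$, so the accumulated exponent at $\mathfrak n$ naturally takes the form $\zeta_{\mathfrak n}\lambda_{\mathfrak n} + \sum_i d_{\mathfrak n_i} q_{\mathfrak n_i}$ rather than the $\zeta_{\mathfrak n_i}$-weighted version stated in the lemma --- but since only $\langle q_{\mathfrak n}\rangle$ enters the bound and each $q_{\mathfrak n}$ differs from $\pm\lambda_{\mathfrak n}$ by a function of the descendant $\lambda$'s, this discrepancy is immaterial both for \eqref{eq-osc-int} and for its use in the reduction to Proposition~\ref{prop-rigidity}.
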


\begin{proof}[Proof of Proposition \ref{prop-couples} assuming Proposition \ref{prop-rigidity}]
The order of our couples is bounded by $N^3$, so there are at most $O(C^{N^3} (N^3)!)$ couples of order $n$, independent of $L$. Similarly, given a couple $\Q$ and the corresponding set $\mathscr C$, defined in Lemma \ref{lem-chains}, there are $O(C^N)$ choices for the collection of SG negative-chain like objects, each of which corresponds to a set $\mathscr C_1$ and thus $\mathcal M_{\mathscr C_1}$ defined in Lemma \ref{lem-chains} and Definition \ref{def-M}. So, it suffices to fix a couple $\Q$ along with a choice $\mathscr C_1$ and $\mathcal M_{\mathscr C_1}$ and bound the corresponding expression for  $\K_{\textgoth{Q}_{\mathcal M_{\mathscr C_1}}}$ in (\ref{eq-splice-exp}). 

For each leaf, we get decay from $n_{\mathrm{in}}$ being Schwartz, so we may restrict  $|k_\ell| \leq L^\theta$ (see Remark \ref{rmk-how-large} for clarification). Then, we may fix some value of $k_\ell^0$ for each leaf and impose  $|k_\ell - k_\ell^0| \leq 1$. This allows us to restrict each $k_\ell$, not just leaves, to an interval of length 2 up to a loss of $C^n$ (a choice of $k_\ell^0$ for each child of a node yields an interval of length 6 for their parent). So, fixing each $k_\ell^0$ with $|k_\ell^0| \lesssim L^{\theta}$ and expressing each $n_{\mathrm{in}}(k_\ell) = \langle k_\ell\rangle^{-50}\mathcal X(k_\ell)$ with $\mathcal X$ a bounded function, 
\begin{align*}
\text(\ref{eq-splice-exp}) &\leq C^n \left(\alpha T^{1/2} \right)^{n_0} \sup_{\pmb \varsigma \in [0,1]^{\mathcal N_0^{\mathrm{sp}}}} \prod\limits_{\mathcal L^{\mathrm{sp}}}^+ \langle k_\ell^0 \rangle^{-30} \sum_{\lambda[\mathcal N^{\mathrm{sp}}]}\left( \frac{\alpha T}{L}\right)^{n_{\mathrm{sp}}} \sum_{\tilde{\mathscr E_{\lambda[\mathcal N^{\mathrm{sp}}]}^{\mathrm{sp}}}} \left| \int_{\mathcal{E}^{\mathrm{sp}}_{\pmb{\varsigma}}} \prod_{\mathfrak n \in \mathcal N^{\mathrm{sp}}}e^{\zeta_{\mathfrak n} 2 \pi i Tt_{\mathfrak n}\Omega_{\mathfrak n}} \diff t_{\mathfrak n} \right| \\
&\lesssim C^n \left(\alpha T^{1/2} \right)^{n_0} \langle k \rangle^{-20} \left( \sup_{\lambda[\mathcal N^{\mathrm{sp}}]} \sum_{\tilde{\mathscr E_{\lambda[\mathcal N^{\mathrm{sp}}]}^{\mathrm{sp}}}} \left( \frac{\alpha T}{L}\right)^{n_{\mathrm{sp}}}\right) \left( \sum_{\lambda[\mathcal N^{\mathrm{sp}}]}\sup_{\substack{\varsigma \in [0,1]^{\mathcal N_0^{\mathrm{sp}}} }}\left|\int_{\mathcal{E}^{\mathrm{sp}}_{\boldsymbol{\varsigma}}} \prod_{\mathfrak n \in \mathcal N^{\mathrm{sp}}}e^{\zeta_{\mathfrak n} 2 \pi i t_{\mathfrak n}\lambda_{\mathfrak n}} \diff t_{\mathfrak n}\right|\right),
\end{align*}
where $\lambda[\mathcal N^{\mathrm{sp}}] \in \Z^{|\mathcal N^{\mathrm{sp}}|}$ and $\tilde{\mathscr E^{\mathrm{sp}}_{\lambda[\mathcal N^{\mathrm{sp}}]}}$ denotes decorations of the couple satisfying $|T\Omega_{\mathfrak n} - \lambda_{\mathfrak n}| \leq 1$ for each $\mathfrak n \in \mathcal N^{\mathrm{sp}}$, in addition to the degeneracy conditions and choices of $k_\ell^0$ for each leaf and node of $\mathcal N^{\mathrm{sp}}$. Here, $n_0 + n_{\mathrm{sp}} = n$, where $n_0$ is number of nodes spliced out. 

Since we have fixed each $k_\ell^0$, there are at most $L^{4}$ options for each $\lambda_{\mathfrak n}$ and we use Lemma \ref{lem-osc-int} to conclude
\begin{align*}
\sum_{\lambda[\mathcal N^{\mathrm{sp}}]}\sup_{\substack{\varsigma \in [0,1]^{\mathcal N_0^{\mathrm{sp}}} }}\left|\int_{\mathcal{E}^{\mathrm{sp}}_{\boldsymbol{\varsigma}}} \prod_{\mathfrak n \in \mathcal N^{\mathrm{sp}}}e^{\zeta_{\mathfrak n} 2 \pi i t_{\mathfrak n}\lambda_{\mathfrak n}} \diff t_{\mathfrak n}\right| & \lesssim \sum_{\lambda[\mathcal N^{\mathrm{sp}}]} \sum_{d_{\mathfrak n} \in \{0,1\}}\frac{1}{\langle \lambda_{\mathfrak n} + \widetilde{q}_{\mathfrak n}\rangle} \\
&\lesssim C^n (\log L)^{n_{\mathrm{sp}}}.
\end{align*}
Therefore by Proposition \ref{prop-rigidity} with $D = L^\theta$, 
\begin{align*}
\text(\ref{eq-splice-exp}) &\lesssim \langle k \rangle^{-20} C^n \left(\alpha T^{1/2} \right)^{n_0} (\log L)^{n_{\mathrm{sp}}} \left( \sup_{\lambda[\mathcal N^{\mathrm{sp}}]} \sum_{\tilde{\mathscr E_{\lambda[\mathcal N^{\mathrm{sp}}]}^{\mathrm{sp}}}} \left( \frac{\alpha T}{L}\right)^{n_{\mathrm{sp}}}\right) \\
&\lesssim \langle k \rangle^{-20} (\log L)^n T^{-\frac{3}{5}}\left(L^\theta\alpha T^{4/5}\right)^n,
\end{align*}
establishing (\ref{eq-couples}). 
\end{proof}

\begin{remark} \label{rmk-how-large}
The fact that we can restrict leaves $\lesssim L^\theta$ is a bit more subtle than in \cite{2019, WKE, WKE2023} as Proposition \ref{prop-rigidity} depends on the sizes of $k_\ell^0$. So, if a single leaf has $|k_\ell| \sim D$, we lose up to $D^{2N}$ in \eqref{eq-counting} (up to a maximum of $T^{2N}$), requiring a corresponding decay $\gg \langle k_\ell \rangle ^{-2N}$ in $n_{\mathrm{in}}$. As $N$ independent of $L$, this is sufficient, although implies that the smaller we take $\epsilon$ in Theorems \ref{mainthm} and \ref{mmtthm}, the more decay we require on $n_{\mathrm{in}}$. 
\end{remark}

\section{Stage 2: Algorithm}
\label{sec-algorithm}
Here, we lay out the algorithm to count decorations of molecules and reduce Proposition \ref{prop-rigidity} to Proposition \ref{prop-2vc-bound}, a bound on the number of two-vector countings used in the algorithm. 

\subsection{Description of molecules}\label{subsec-assumptions} Before we begin Stage 2, let us first perform the following pre-processing step on couples:

\vspace{.1cm}
\noindent \textbf{Pre-processing Step:} For a couple $\Q$ and corresponding molecule $\M(\Q)$, consider the set $\mathscr C$ and corresponding set $\mathcal M_{\mathscr C}$ described in Lemma \ref{lem-chains} and Definition \ref{def-M}, for all chains, not just negative ones. Splice the couple $\Q$ at the nodes in $\mathcal M_{\mathscr C}$.
\vspace{.1cm}

This allows us to use the description of molecules from Proposition \ref{prop-stage1} without the restriction that chains must be negative or SG (notably we have conditions (b) and (c) for all chains). Further, let us make the following assumptions on the molecules ${\M = \M(\Q)}$:
\begin{enumerate}
\item The molecule $\M$ has 2 degree 3 atoms, with the rest degree 4. 
\item The molecule $\M$ contains no degenerate atoms.
\end{enumerate}
We deal with the pre-processing step and the simplifying assumptions in Remark \ref{rmk-remove-assumptions}. For such molecules, we may rule out the following structures:

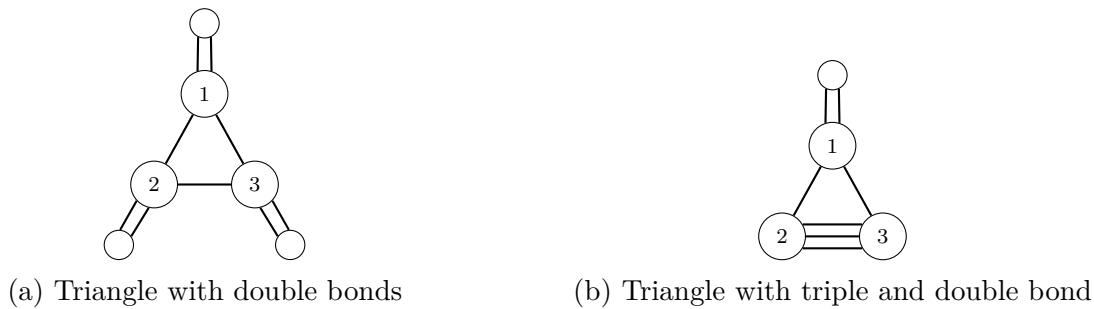
\begin{figure}[h]
    \centering
    \begin{subfigure}[t]{0.5\linewidth}
        \centering
        \begin{tikzpicture}[scale = .67]
        \tikzstyle{every node} = [circle, draw = black]
        \node (1) at (0, 0) {\tiny{1}};
        \node (2) at (-1, -1.8) {\tiny{2}};
        \node (3) at (1,-1.8) {\tiny{3}};
        \node (4) at (0,1.4) {};
        \node (5) at (-1.7, -3) {};
        \node (6) at (1.7, -3) {};

        \draw[thick] (1) -- (2);
        \draw[thick] (2) -- (3);
        \draw[thick] (3) -- (1);
        \draw[thick] (1.73) -- (4.295);
        \draw[thick] (4.245) -- (1.107);
        \draw[thick] (2.223) -- (5.85);
        \draw[thick] (5.35) -- (2.257);
        \draw[thick] (3.283) -- (6.145);
        \draw[thick] (6.95) -- (3.317);
        \end{tikzpicture}
        \subcaption{Triangle with double bonds}
        \label{fig-triangle-td}
    \end{subfigure}
    ~
    \begin{subfigure}[t]{0.5\linewidth}
        \centering
        \begin{tikzpicture}[scale = .67]
        \tikzstyle{every node} = [circle, draw = black]
        \node (1) at (0, 0) {\tiny{1}};
        \node (2) at (-1, -1.8) {\tiny{2}};
        \node (3) at (1,-1.8) {\tiny{3}};
        \node (4) at (0,1.4) {};

        \draw[thick] (1) -- (2);
        \draw[thick] (2.30) -- (3.150);
        \draw[thick] (3.180) -- (2);
        \draw[thick] (2.330) -- (3.210);
        \draw[thick] (3) -- (1);
        \draw[thick] (1.73) -- (4.295);
        \draw[thick] (4.245) -- (1.107);

        \end{tikzpicture}
        \subcaption{Triangle with triple and double bond}
        \label{fig-triangle-tt1}
    \end{subfigure}
    \caption{Two non-allowed atomic groups in Proposition \ref{prop-triangle}.}
    \label{fig-triangle}
\end{figure}

\begin{proposition} \label{prop-triangle}
Molecules assumed above cannot contain the atomic groups in {Figure \ref{fig-triangle}}.
\end{proposition}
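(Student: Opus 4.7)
The plan is to rule out each atomic group by combining a slot-count at the triangle atoms with the pseudo-hyperchain characterization of surviving CL double bonds after the pre-processing step. Throughout, each of the triangle atoms $1$, $2$, $3$ already has degree $4$ entirely within the displayed atomic group (two intra-triangle bonds plus the two bonds of the outgoing double or triple connection), so none of them admits any external bond and its parent in the underlying couple must appear among the displayed atoms.

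First I would show that at least one outward double bond is forced to be CL with the external atom as parent of the triangle atom. For Figure \ref{fig-triangle-td}, if all three of $1\text{--}4$, $2\text{--}5$, $3\text{--}6$ were CN, then at each triangle atom $v$ the two CN slots together with the two intra-triangle incidences force one of the two triangle bonds at $v$ to be PC oriented upward, so every triangle atom has a triangle parent. The resulting parent function on $\{1,2,3\}$ necessarily contains a directed cycle, contradicting the tree structure of the couple, so at least one outward bond is not CN; a further slot count rules out the other CL orientation (which needs two child slots) and forces the remaining option: CL with the external atom as parent of the triangle atom. For Figure \ref{fig-triangle-tt1}, analyzing the triple bond $2\text{--}3$ in its two possible decompositions (three LP bonds, or two LP bonds and one PC bond) shows in either case that $2$ and $3$ use their three child slots entirely on the triple bond, forcing both of them to be branching children of $1$. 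Atom $1$ then has its three child slots occupied by $2$, $3$, and one leaf, leaving only the parent slot free for $1\text{--}4$; the same slot count forces $1\text{--}4$ to be CL with $4$ as parent of $1$.

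With the outward CL bond $\ell$ in hand, I invoke the extension of Proposition \ref{prop-stage1}(c) supplied by the pre-processing step at the start of Section \ref{subsec-assumptions}: every CL double bond in the post-processed molecule lies inside a pseudo-hyperchain, i.e. inside a chain $(v_0,\dots,v_q)$ of double bonds whose endpoints $v_0, v_q$ share a common single-bond neighbour $w$. In our setup $v_0$ is a triangle atom whose single-bond neighbours are confined to the other triangle atoms (those atoms have no external bonds by the degree count), so $w$ must itself be a triangle atom; its single-bond neighbours are likewise confined to the triangle, so $v_q$ must be a triangle atom too. If the chain has length one, then $v_q\in\{4,5,6\}$ is external and is not joined by any single bond to a triangle atom, ruling out $w$. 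If the chain has length at least two, the pre-processing contracts it to a single double bond (or a triple bond, for hyperchains) between $v_0$ and $v_q$, thereby producing a double or triple bond between two triangle atoms in the post-processed molecule — contradicting the fact that all intra-triangle bonds in Figures \ref{fig-triangle-td} and \ref{fig-triangle-tt1} are single. Either way the pseudo-hyperchain requirement fails, which gives the desired contradiction.

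The main obstacle is the slot-counting case analysis for Figure \ref{fig-triangle-td}, where one must verify that every assignment of PC/LP types and directions on the three intra-triangle bonds, compatible with the tree structure of the couple, either produces a cycle in the parent map or forces at least one outward double bond into the CL configuration identified above. The three-fold rotational symmetry $1\leftrightarrow 2\leftrightarrow 3$ collapses the bookkeeping to a small number of essentially distinct cases, and the triple-bond dichotomy for Figure \ref{fig-triangle-tt1} is an analogous but noticeably cleaner exercise.
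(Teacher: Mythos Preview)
Your approach and the paper's use the same two ingredients: (i) the post-processing fact that every surviving CL double bond sits in a (length-one) pseudo-hyperchain, and (ii) a PC/LP slot-count on the triangle showing the all-CN configuration is inconsistent. The paper runs them in the opposite order: it first observes that the outward double bonds \emph{cannot} be CL, because a pseudo-hyperchain needs a common single-bond neighbour of the two endpoints, and the triangle atom's only single-bond neighbours are the other two triangle atoms (which are not adjacent to the external atom); hence every outward double bond is CN (both edges LP), and then the internal labelling of the three single bonds forces a cycle. You instead run the cycle argument first to produce a CL bond, then invoke the pseudo-hyperchain obstruction. Logically equivalent, but the paper's ordering avoids the detour through CL orientations.

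Two small issues in your write-up. First, the intermediate claim that the CL bond must be oriented with the external atom as parent is unnecessary and not obviously forced by slot-counting alone; the pseudo-hyperchain contradiction works for either orientation, so you can drop that clause. Second, in the triple-bond case your assertion that ``2 and 3 use their three child slots entirely on the triple bond'' fails in the two-LP-plus-one-PC decomposition: the C-labelled atom there spends its \emph{parent} slot, not a child slot, on the PC edge, so only one of the two atoms has all three children accounted for. The contradiction still goes through (the P-labelled atom must take atom~1 as its parent, which together with the CN double bond at~1 over-fills atom~1's child slots, exactly as in the paper's Figure labelled tt2), but the bookkeeping needs to track the two atoms asymmetrically. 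Your pseudo-hyperchain length-$\geq 2$ sub-case is also superfluous: after pre-processing every pseudo-hyperchain is already a single double bond.
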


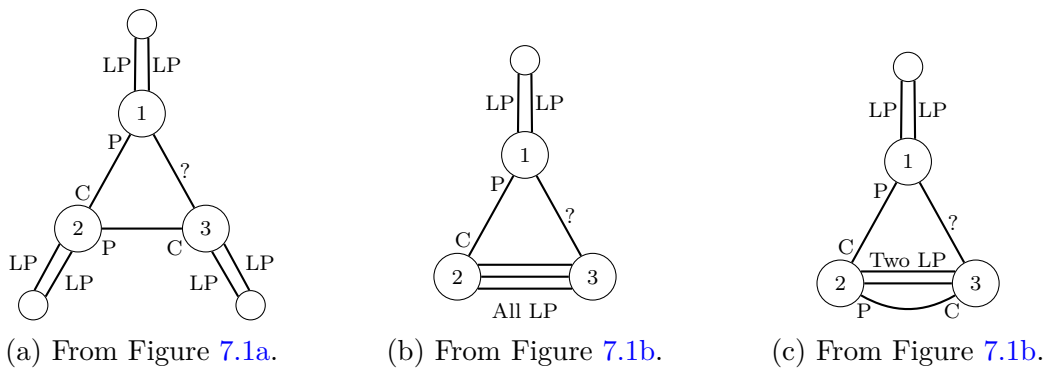
\begin{figure}[t]
    \centering
    \begin{subfigure}[t]{0.3\linewidth}
        \centering
        \begin{tikzpicture}[scale = .85]
        \tikzstyle{a node} = [circle, draw = black]
        \tikzstyle{b node} = [inner sep=1,outer sep=0]
    \node[a node] (1) at (0, 0) {\tiny{1}};
    \node[a node] (2) at (-1, -1.8) {\tiny{2}};
    \node[a node] (3) at (1,-1.8) {\tiny{3}};
    \node[a node] (4) at (0,1.4) {};
    \node[a node] (5) at (-1.7, -3) {};
    \node[a node] (6) at (1.7, -3) {};

    \draw[thick] (1) -- (2)
        node[pos = .1,draw = none]{\tiny{P \ \ \ }}
        node[pos = .8, draw = none]{\tiny{C \ \ \ \ }};
    \draw[thick] (2)  -- (3)
        node[below, pos = .09,draw = none]{\tiny{P}}
        node[below, pos = .91,draw = none]{\tiny{C}};
    \draw[thick] (3) -- (1)
            node[pos = .5, draw = none]{\tiny{\ \ \ ?}};
    \draw[thick] (1.73) -- node[a node, left, draw = none]{\tiny{LP}}(4.295);
    \draw[thick] (4.245) -- node[a node, right, draw = none]{\tiny{LP}}(1.107);
    \draw[thick] (2.220) -- node[left, pos = .4, draw = none]{\tiny{LP}}(5.85);
    \draw[thick] (5.35) -- node[right, pos = .3, draw = none]{\tiny{LP}}(2.255);
    \draw[thick] (3.285) -- node[left, pos = .7, draw = none]{\tiny{LP}} (6.145);
    \draw[thick] (6.95) -- node[right, pos = .6, draw = none]{\tiny{LP}} (3.320);
        \end{tikzpicture}
        \subcaption{From Figure \ref{fig-triangle-td}.}
        \label{fig-triangle-label-td}
    \end{subfigure}
    ~
    \begin{subfigure}[t]{0.3\linewidth}
    \centering
    \begin{tikzpicture}[scale = .9]
    \tikzstyle{a node} = [circle, draw = black]
    \tikzstyle{b node} = [inner sep=1,outer sep=0]
    \node[a node] (1) at (0, 0) {\tiny{1}};
    \node[a node] (2) at (-1, -1.8) {\tiny{2}};
    \node[a node] (3) at (1,-1.8) {\tiny{3}};
    \node[a node] (4) at (0,1.4) {};
    \node[b node] (5) at (0,-2.3) {\tiny{All LP}};

    \draw[thick] (1) -- (2)
        node[a node, pos = .1,draw = none]{\tiny{P \ \ \ }}
        node[a node, pos = .8, draw = none]{\tiny{C \ \ \ \ }};
    \draw[thick] (2.30) -- (3.150);
    \draw[thick] (3.180) -- (2);
    \draw[thick] (2.330) -- (3.210); 
    \draw[thick] (3) -- (1) node[a node, pos = .5, draw = none]{\tiny{\ \ \ ?}};
    \draw[thick] (1.73) -- node[a node, left, draw = none]{\tiny{LP}}(4.295);
    \draw[thick] (4.245) -- node[a node, right, draw = none]{\tiny{LP}}(1.107);

        \end{tikzpicture}
        \subcaption{From Figure \ref{fig-triangle-tt1}. }
        \label{fig-triangle-label-tt1}
    \end{subfigure}
    ~
    \begin{subfigure}[t]{0.3\linewidth}
        \centering
        \begin{tikzpicture}[scale = .9]
        \tikzstyle{a node} = [circle, draw = black]
        \tikzstyle{b node} = [inner sep=1,outer sep=0]
    \node[a node] (1) at (0, 0) {\tiny{1}};
    \node[a node] (2) at (-1, -1.8) {\tiny{2}};
    \node[a node] (3) at (1,-1.8) {\tiny{3}};
    \node[a node] (4) at (0,1.4) {};
    \node[b node] (5) at (0,-1.45) {\tiny{Two LP}};
    \node[b node] (6) at (-.65,-2.2) {\tiny{P}};
    \node[b node] (7) at (.65,-2.2) {\tiny{C}};

    \draw[thick] (1) -- (2)
        node[a node, pos = .1,draw = none]{\tiny{P \ \ \ }}
        node[a node, pos = .8, draw = none]{\tiny{C \ \ \ \ }};
    \draw[thick] (2.30) -- (3.150);
    \draw[thick] (3.180) -- (2);
    \draw[thick] (2.330) to [out=-30,in=-150]  (3.210);
    \draw[thick] (3) -- (1)
            node[a node, pos = .5, draw = none]{\tiny{\ \ \ ?}};
    \draw[thick] (1.73) -- node[a node, left, draw = none]{\tiny{LP}}(4.295);
    \draw[thick] (4.245) -- node[a node, right, draw = none]{\tiny{LP}}(1.107);

        \end{tikzpicture}
        \subcaption{From Figure \ref{fig-triangle-tt1}.}
        \label{fig-triangle-label-tt2}
    \end{subfigure}
    \caption{Labelling of atomic groups in Figure \ref{fig-triangle}.}
    \label{fig-triangle-label}
\end{figure}

\begin{proof}
The molecule is of the form $\M = \M(\Q)$, so we attempt to label each bond as LP or PC in Figure \ref{fig-triangle-label} in accordance with the couple structure.
Each double bond must be a CN double bond as they cannot be part of a pseudo-hyperchain, so are labelled with LP.

Focusing on Figure \ref{fig-triangle-td}, the single bonds connecting the atoms labelled 1,2, and 3 cannot all be LP, so let us assume that the bond connecting atoms 2 and 3 is PC, with atom 2 labelled as P. This forces the bond between atom 1 and 2 to be PC with atom 1 labelled as P since all of atom 2's children in the couple $\Q$ have been accounted for. Then, there is no label we can put for the bond connecting atoms 1 and 3, as indicated in Figure \ref{fig-triangle-label-td}.

Similarly, if we focus on Figure \ref{fig-triangle-tt1}, at least two of the bonds in the triple bond connecting atoms 2 and 3 must be LP, indicated in Figures \ref{fig-triangle-label-tt1} and \ref{fig-triangle-label-tt2}, where we arbitrarily chose to label 2 with a P if one of the bonds is PC. In either case, the bond connecting atoms 1 and 2 must be PC with atom 1 labelled as P. Then, there is no label we can put for the bond connecting atoms 1 and 3. 
\end{proof}

\subsection{Algorithm}
On a molecule as in Section \ref{subsec-assumptions}, we perform the algorithm below. This allows us to bound the decorations of a molecule one step at a time. Suppose we have a molecule $\M^{\mathrm{pre}}$ at some stage of the algorithm and we perform an operation of the algorithm to yield $\M^{\mathrm{post}}$. Correspondingly let the number of decorations of each be denoted by $\mathfrak D^{\mathrm{pre}}$ and $\mathfrak D^{\mathrm{post}}$. 
For the operation, let $\mathfrak C$ denote a corresponding counting estimate so that roughly, 
\begin{equation}
\mathfrak D^{\mathrm{pre}} \leq \mathfrak C \cdot \mathfrak D^{\mathrm{post}}.
\end{equation}
Start the following at Operation 1, where each time we remove an atom, we also remove all bonds connected to it:  

\begin{enumerate}\settozero
    \item[(0)] If possible, remove an atom of degree 2 with a double bond. Go to (0). 
    \item Otherwise, if possible, remove a bridge (recall Definition \ref{def-molecules}). Go to (0).
    \item Otherwise, if possible, remove an atom of degree 3 with a triple bond. Go to (1).
    \item Otherwise, if possible, remove an atom of degree 3 with a double bond. Go to (0).
    \item Otherwise, if possible, remove an atom of degree 3 with only single bonds. Go to (1).
    \item Otherwise, if possible, remove an atom of degree 2 with only single bonds which satisfy at least one of the following: 
        \begin{enumerate}[(a)]
        \item The atom is connected to two atoms which are themselves connected by a double bond.
        \item The atom is connected to two atoms which are themselves connected by a single bond, which becomes a bridge if the operation were to be performed.
        \item The atom is connected to two atoms which are themselves connected by a single bond and there is no double bond at either. 
        \item The atom is connected to two atoms which are themselves not connected.
        \end{enumerate}
    Go to (1).
    \item Otherwise, if possible, remove an atom of degree 2 with only single bonds connected to two atoms which are themselves connected by a single bond, and there is a double bond at precisely one of them. Go to (1).
    \item Otherwise, if possible, remove an atom of degree 2 with only single bonds which is connected to two atoms which are themselves connected by a triple bond. Go to (2).
    \item Otherwise, if possible, remove an atom of degree 2 with only single bonds connected to two atoms which are themselves connected by a single bond, and there is a double bond at both of them. Go to (1).
    \item Otherwise, remove a sole atom (degree 0) with no edges. Repeat.
\end{enumerate}

\subsection{Invariant properties}
We state the following invariant properties describing the molecule after a given operation is performed.

\begin{proposition}\label{prop-inv}
After Operation 2 or 4 - 9 is performed, the molecule contains no degree 2 atoms with a double bond. Similarly, Operation 3 creates at most 1 such atom, and Operation 1 creates at most 2. After performing Operation 0, there is at most one such atom left (in a different component).
\end{proposition}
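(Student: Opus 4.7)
The plan is to proceed by case analysis on the operation, built around one structural fact that I will use throughout: after the pre-processing of Section \ref{subsec-assumptions}, no atom of the molecule can carry two double bonds. The reason is that any such atom would sit as an interior node of a chain of length at least two, and Proposition \ref{prop-stage1}(b), applied via the pre-processing to \emph{all} chains, reduces every chain to either a single atom or a single double bond, with hyperchains reduced to triple bonds and pseudo-hyperchains to single-double-bond pseudo-hyperchains; Proposition \ref{prop-triangle} excludes the short-cycle configurations that would otherwise realize a two-double-bond atom outside any reducible linear chain. Since operations of the algorithm only delete atoms and bonds, this property persists throughout the algorithm.

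Call a degree-$2$ atom with a double bond a \emph{bad atom}. The algorithm's priority ordering gives the inductive hypothesis: before any Operation $k$ with $k \geq 1$, all earlier operations are inapplicable, and in particular Op 0 inapplicability forces no bad atom to be present at the start of Op $k$. With this in hand, the easier operations follow quickly. For Op 2 the unique affected neighbor drops to degree $\leq 1$. For Ops 4, 5, 6, 8 each affected neighbor loses exactly one degree, and for such a neighbor to become bad it would have needed to begin as a degree-$3$ atom with a double bond --- precisely the configuration excluded by Op 3's inapplicability at this stage. For Op 7 the two affected neighbors become a degree-$3$ pair joined by a triple bond, and Op 9 affects nothing else; so none of Ops 2, 4--9 creates any new bad atom.

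The three delicate operations are Ops 3, 1, and 0. In Op 3 the removed atom $v$ has a double bond to some $u_1$ and a single bond to some $u_2$: the neighbor $u_1$ loses two degrees and could become bad only if it had started with two double bonds (one to $v$ and one to some $w_1$), which the structural fact forbids, so only $u_2$ can become bad, giving the bound of one. In Op 1 removing a bridge alters only the two endpoint degrees, each of which may become bad (a degree-$3$ endpoint with a double bond is permissible here since Op 3 has not yet been checked), so at most two new bad atoms. In Op 0 the sole affected neighbor would again need two double bonds to become bad, which the structural fact forbids, so Op 0 creates no new bad atom; since Op 0 runs only when a bad atom is present and the only predecessor operations that can leave bad atoms behind are Op 1 (up to two) and Op 3 (up to one), a single Op 0 firing reduces the count to at most one. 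The ``different component'' clarification records the Op 1 scenario where the cut bridge splits the molecule into two components, so that the two produced bad atoms lie in distinct components and Op 0 removes one while the other lingers in its own component.

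The principal obstacle is establishing the structural fact rigorously, which requires invoking both Proposition \ref{prop-stage1}(b) via the pre-processing that extends its scope to all chains (not merely negative SG ones), and Proposition \ref{prop-triangle} to close off the non-chain short-cycle configurations. Once the structural fact is in hand, every case reduces to a routine bookkeeping of how neighbor degrees change under atom removal.
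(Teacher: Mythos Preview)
Your proof is correct and follows essentially the same approach as the paper's: both hinge on the structural fact that no atom carries two double bonds (the paper phrases this as ``chains of double bonds are not allowed''), and both then do a case analysis using the algorithm's priority ordering. Your treatment is more explicit in isolating the priority argument (Op~3 inapplicability $\Rightarrow$ no degree-$3$ atom with a double bond $\Rightarrow$ Ops~4--8 cannot create a bad atom), which the paper compresses into one sentence.

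One minor correction: your invocation of Proposition~\ref{prop-triangle} to establish the structural fact is unnecessary and misdirected. The configurations that Proposition~\ref{prop-triangle} rules out do \emph{not} contain any atom with two double bonds --- in Figure~\ref{fig-triangle-td} each of atoms $1,2,3$ has exactly one double bond (to an external neighbor) plus two single bonds inside the triangle, and in Figure~\ref{fig-triangle-tt1} atoms $2,3$ share a triple bond while atom $1$ has one double bond. The structural fact follows entirely from Proposition~\ref{prop-stage1}(b) via the pre-processing step (together with Proposition~\ref{prop-CN} guaranteeing at most one CN double bond per chain): any atom $v$ with double bonds to distinct neighbors $u,w$ sits in the chain $(u,v,w)$, which lies inside some element of $\mathscr C$ and is therefore reduced. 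There is no residual ``non-chain short-cycle'' case to handle, so Proposition~\ref{prop-triangle} plays no role here. This does not affect the validity of your argument, since the rest proceeds exactly as in the paper once the structural fact is in hand.
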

\begin{proof}
To create a degree 2 atom with a double bond while performing the algorithm, the atom must originally be degree 3 (degree 4 isn't possible as chains of double bonds are not allowed) and have its degree reduced to 2. Operations 4 - 9 cannot create such an atom since the removal of degree 3 atoms with double bonds are prioritized.
Operation 2 cannot create such an atom as it reduces the degree of two atoms by 3. Operation 3 removes an atom of degree 3 with a double bond, so this atom must be connected to two distinct atoms. The atom it is connected to via double bond cannot be part of an additional double bond due to splicing, leaving one atom which may become degree 2 with a double bond. Operation 1 removes a bridge, of which each of the atoms connected to it could become degree 2. Lastly, Operation 0 cannot create new degree 2 atoms with double bonds. Via other operations, the most that exist at any time is 2, so after Operation 0, at most 1 remains.
\end{proof}

\begin{proposition} \label{prop-deg2}
After each of Operation 5-8 are performed, the component of the molecule in which it was performed has at least 2 atoms of degree 1 or 3.
\end{proposition}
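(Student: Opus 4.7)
The plan is a short parity argument via the handshake lemma. The key observation is that by the time Operation 5--8 is triggered, the earlier operations have all been ruled out, and this forces every atom of the current molecule to have even degree, after which the operation introduces exactly two new odd-degree atoms that must end up in the same component.

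First I would enumerate the structural constraints implied by Operations 0--4 being inapplicable. The failure of Operation 1 means the molecule has no bridges; since the unique bond at any degree 1 atom is automatically a bridge, there are no degree 1 atoms. The failures of Operations 2, 3, 4 together rule out every possible configuration at a degree 3 atom (triple bond, double bond, or only single bonds), so there are no degree 3 atoms. The failure of Operation 0 rules out degree 2 atoms with a double bond. Combining these, every atom of the molecule has even degree at this moment, namely 0, 2, or 4.

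Next I would analyze the removal itself. The atom $v$ removed in Operation 5--8 has degree 2 with only single bonds, so it connects to two distinct atoms $v_1 \neq v_2$ (two parallel single bonds between the same pair would constitute a double bond at $v$, contradicting the only-single-bonds hypothesis). Upon removing $v$, the degrees of $v_1$ and $v_2$ each drop by exactly one, converting their previously even degrees to odd degrees (i.e.\ $1$ or $3$), while every other atom retains its old even degree.

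Finally I would invoke the handshake lemma to conclude. Let $C$ be the component containing $v$ before the operation. If $v_1$ and $v_2$ landed in different post-operation components, then the component containing $v_1$ would contain exactly one odd-degree atom, violating the parity constraint that every component of a graph has an even number of odd-degree vertices. Hence $v_1$ and $v_2$ must lie in the same post-operation component $C'$, which therefore contains the two atoms $v_1, v_2$ of degree 1 or 3, as required. No step presents a real obstacle; the only point worth double-checking is the exhaustive case analysis verifying that the failure of Operations 0--4 truly forces every atom to have even degree.
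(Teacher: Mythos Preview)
Your proof is correct and follows essentially the same parity argument as the paper: all atoms have even degree before the operation, and removing the two single bonds creates exactly two odd-degree atoms. The paper's proof is terser and does not explicitly invoke the handshake lemma to confirm that $v_1$ and $v_2$ land in the same post-operation component, so your added verification is a welcome clarification of what the paper leaves implicit.
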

\begin{proof}
If one of Operations 5-8 is being performed, each atom in the component is even degree of degree 2 or 4. We remove two bonds, so the number of edges is still even degree. Since the bonds removed were not double bonds, two atoms now have odd degree. 
\end{proof}

\begin{proposition} \label{prop-sole}
Only Operations 1 and 2 may create a sole atom. 
\end{proposition}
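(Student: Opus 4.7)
The plan is to check operation by operation that no operation other than Op~$1$ or Op~$2$ can turn a remaining atom into a sole atom.

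First, I would record two invariants that follow immediately from the priority ordering of the algorithm: whenever any Op~$k$ with $k\geq 2$ is about to fire, the molecule contains no degree-$2$ atom with a double bond (else Op~$0$ would fire first) and no bridges (else Op~$1$ would fire first). The second invariant yields the corollary that there are no degree-$1$ atoms at such a moment, since the unique edge at a degree-$1$ atom is automatically a bridge. In particular, before any Op with $k\geq 3$, every neighbor of the atom about to be removed has degree at least $2$, and the double-bond partner in Op~$3$ has degree at least $3$.

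Next I would dispose of the routine operations Ops~$3$--$9$. Each of Ops~$3$--$8$ removes an atom of degree $2$ or $3$, so each neighbor of the removed atom loses either $1$ unit of degree or, in the unique case of the double-bond partner in Op~$3$, $2$ units. Combined with the lower bounds above, every neighbor retains degree at least $1$. Op~$9$ only deletes a pre-existing sole atom and changes nothing else, so no new sole atoms appear.

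The main obstacle is Op~$0$. Here the removed atom $v$ has a double bond to some $w$ and $w$'s degree drops by $2$, so the hazard is the scenario $\deg(w)=2$ with both of $w$'s edges lying in the double bond to $v$; equivalently, that $\{v,w\}$ already forms a two-atom connected component joined only by the double bond. I would rule out this configuration by a parallel induction on the run of the algorithm, showing it never appears. The base case holds because the spliced initial molecule of Proposition~\ref{prop-stage1} has at most one degree-$2$ atom, and the $n=2$ instance carries a triple bond. For the inductive step I would check each operation in turn: Op~$2$ drops its triple-bond partner's degree by exactly $3$, leaving degree at most $1$, so no isolated $\{v,w\}$ pair with both degree $2$ arises; Op~$1$ could only detach such a pair from the rest if the removed bridge were incident to $v$ or $w$, and in that case the non-bridge endpoint would already have been degree-$2$-with-double-bond, contradicting the Op~$0$ priority at the moment Op~$1$ was invoked; Op~$0$ itself cannot spawn a new degree-$2$-with-double-bond atom in the same component because the pre-processing step removes chains, so no atom carries two distinct double bonds; and for Op~$3$ or any of Ops~$4$--$8$, forming an isolated $\{v,w\}$ pair would force $v$ and $w$ to have been degree $3$ with a double bond to one another already, triggering Op~$3$ and contradicting the ordering. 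Consequently $\deg(w)\geq 3$ at the moment Op~$0$ is invoked, and $\deg(w)\geq 1$ after, so Op~$0$ does not create a sole atom. Combining the routine cases with this inductive control of Op~$0$ proves the proposition; the principal difficulty is exactly the Op~$0$ analysis, which relies on both the priority invariants of Proposition~\ref{prop-inv} and the chain-freeness guaranteed by the pre-processing step.
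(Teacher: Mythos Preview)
Your approach mirrors the paper's: dispose of Operations 3--8 via the priority invariants (no degree-1 atoms and no degree-2-with-double-bond atoms can be present when these fire), and handle Operation 0 by showing the isolated two-atom double-bond component never arises. The paper compresses the Op~0 argument into a direct appeal to Proposition~\ref{prop-inv}, whereas you unfold the induction operation by operation; both routes reach the same conclusion.

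There is one genuine slip in your inductive step. When you lump Operation 3 together with Operations 4--8 and argue that an isolated $\{v,w\}$ pair forces ``$v$ and $w$ to have been degree 3 with a double bond to one another, triggering Op~3 and contradicting the ordering,'' this fails for Operation 3 itself: you are already at Op~3, so re-triggering it is no contradiction, and in fact the double-bond partner of the removed atom had degree 4, not 3. The repair is the chain-freeness you already use elsewhere. If $v$ is the double-bond partner of the removed atom $u$, then $v$ carries double bonds to both $u$ and $w$, a forbidden chain; if instead $v$ is the single-bond partner, then $w$ is untouched by the removal and was already degree-2-with-double-bond before Op~3 fired, contradicting the Op~0 invariant you established. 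With this correction your argument is complete and equivalent to the paper's.
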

\begin{proof}
Let us show the other operations may not create a sole atom. If Operation 0 created a sole atom, the molecule must have contained a component of two degree 2 atoms connected by a double bond. As chains of double bonds are not allowed and Operation 0 is prioritized, a single operation must have removed precisely one other bond at each atom simultaneously, causing them to be degree 2. Due to Proposition \ref{prop-inv} and the lack of chains, no such operation can do this. If Operation 3-8 created a sole atom, then the atom removed as part of this operation is connected to an atom of degree 1 (Operation 1 would have priority) or an atom of degree 2 with a double bond (Operation 0 would have priority). 
\end{proof}

\subsection{Operation types and Proposition \ref{prop-2vc-bound}}
We sort each operation of the algorithm with the same value of $\mathfrak C$ (and $\Delta \chi$) into the named types below, where we will keep track of the number of operations performed of each of these types. 
\begin{enumerate}
\item \textbf{Bridge Operations} consist of Operation 1. In this case, $\Delta \chi = 0$ and $\mathfrak C = 1$ by summing (\ref{eq-mol-k}) for all atoms on one side of the bridge and noting that all bonds but the bridge appear precisely twice with opposite signs. Let $m_0$ denote the number of bridge operations. 
\item \textbf{Sole Atom Operations} consist of Operation 9. Here $\Delta \chi = 0$ and as no bonds are being removed, $\mathfrak C = 1$. Let $m_1$ denote the number of sole atom operations. 
\item \textbf{Two-Vector Counting Operations} consist of Operations 0 and 5 - 8. In this case, $\Delta \chi = -1$ and $\mathfrak C = L$ by Proposition \ref{prop-vc}. Let $m_2$ denote the number of two-vector counting operations. 
\item \textbf{Three-Vector Counting Operations} consist of Operations 2-4. In this case, ${\Delta \chi = -2}$ and ${\mathfrak C = L^{2 + \theta}T^{-1}D^{2-\sigma}}$ by Proposition \ref{prop-vc}. Let $m_3$ denote the number of three-vector counting operations. 
\end{enumerate}

We may obtain the following initial estimate on $m_0$:

\begin{proposition}\label{prop-bridge-op}
(Bound on bridge operations) For a molecule $\mathbb{M}$ as in Section \ref{subsec-assumptions}, ${m_0 < m_3}$.
\end{proposition}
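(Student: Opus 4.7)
The plan is a triple counting argument. I track the number of atoms removed, the Euler characteristic $\chi = E - V + F$, and the number of connected components $F$ through the run of the algorithm, and show that the three resulting identities combine to force $m_3 \geq m_0 + 1$. Every operation except Op~1 (which removes only a bond) removes exactly one atom of $\M$, so immediately
\begin{equation*}
m_1 + m_2 + m_3 = n,
\end{equation*}
where $n$ is the number of atoms of $\M$.

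For the Euler characteristic, $\chi_{\mathrm{initial}} = (2n-1) - n + 1 = n$ and $\chi_{\mathrm{final}} = 0$, so $\sum \Delta\chi = -n$. A case check gives $\Delta\chi = -1$ for Op~0, $\Delta\chi = 0$ for Op~1, $\Delta\chi = -2$ for Op~2, and $\Delta\chi = 0$ for Op~9; in each of these cases $\Delta F = 0$ deterministically, because for Op~0 and Op~2 the removed atom has a single neighbor (joined by the double or triple bond) so removing it cannot split the rest of the component. For each Op~3 and Op~4 one has $\Delta\chi = -2 + \Delta F$, and for each of Operations~5--8 one has $\Delta\chi = -1 + \Delta F$, where $\Delta F \geq 0$ counts the number of new components the operation creates. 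Writing $E \geq 0$ for the total of all these $\Delta F$ contributions across the algorithm,
\begin{equation*}
m_2 + 2 m_3 = n + E.
\end{equation*}

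For the component count the central observation is that under the Section~\ref{subsec-assumptions} assumptions the molecule has no self-loops (by Proposition~\ref{prop-stage1}(d)), so the last atom left in any connected component has degree $0$ and can only be removed by Op~9. Hence every connected component that ever exists during the algorithm is destroyed by exactly one Op~9. Starting from one component, Op~1 adds $m_0$ new components and Operations~3--8 add $E$ more, while Op~9 destroys them one at a time, so
\begin{equation*}
m_1 = 1 + m_0 + E.
\end{equation*}

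Combining the atom and Euler identities gives $m_1 = n - m_2 - m_3 = m_3 - E$; equating this with the component identity yields $m_3 = 1 + m_0 + 2E \geq 1 + m_0$, which is the strict inequality $m_0 < m_3$. The main points of care are verifying that $\Delta F = 0$ for Op~0 and Op~2 (immediate from the single-neighbor structure) and the ``last atom is degree zero'' observation underpinning $m_1 = 1 + m_0 + E$, both of which rely on the absence of self-loops guaranteed by Section~\ref{subsec-assumptions}; after that the result is pure bookkeeping.
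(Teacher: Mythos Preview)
Your proof is correct and reaches the same conclusion as the paper, but the bookkeeping is organized differently. The paper tracks the edge count ($3m_3 + 2m_2 + m_0 = 2n-1$) together with the $\chi$ identity ($2m_3 + m_2 = n$); subtracting gives $m_3 + m_2 + m_0 = n-1$, and combining with the $\chi$ identity forces $m_3 = m_0 + 1$ directly (the paper phrases this as a contradiction, but it is an equality). You instead track the atom count ($m_1 + m_2 + m_3 = n$) and a component count ($m_1 = 1 + m_0 + E$), together with $\chi$. The two approaches are equivalent linear algebra---your atom and component identities combine to recover the paper's edge identity---and both yield the exact relation $m_3 = m_0 + 1$ once one notes that $E = 0$. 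In fact your parameter $E$ is always zero here: Operations 3--8 are only reached after Operation~1 has been ruled out, so no bridge is present, and (by the same single-neighbor reasoning you gave for Op~0 and Op~2) removing a degree-$2$ or degree-$3$ atom none of whose incident edges is a bridge cannot separate its component. Carrying $E$ does no harm, but you could drop it and match the paper's two-line derivation. The paper's route avoids $m_1$ entirely; yours has the mild advantage of making the role of component creation and destruction explicit.
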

\begin{proof}
Writing equations for edges and $\chi$, we get
\begin{align}
3m_3 + 2m_2 + m_0 = 2n-1 \\
2m_3 + m_2 = n \label{eq-chi}
\end{align}
By subtracting the two equations, we may deduce that 
\begin{equation}\label{eq-all-op}
m_3 + m_2 + m_0 = n - 1. 
\end{equation}
So, if $m_0 \geq m_3$, (\ref{eq-all-op}) becomes $$2m_3 + m_2 \leq n-1,$$ which contradicts (\ref{eq-chi}). Therefore, $m_0 < m_3$.
\end{proof}

Now, we may reduce Proposition \ref{prop-rigidity} to obtaining a bound on the number of two-vector counting operations performed.

\begin{proposition}{(Bound on 2 v.c.)}\label{prop-2vc-bound} For any molecule $\M(\Q)$ as assumed in Section \ref{subsec-assumptions}, we must have 
\begin{equation}\label{eq-bound}
m_2 \leq 3(m_3 - 1).
\end{equation}
\end{proposition}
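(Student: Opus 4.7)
The plan is to construct a mapping $\Phi$ (this is Definition \ref{def-mapping}) from the set of two-vector counting operations to the set of three-vector counting operations such that each 3-v.c. receives at most three preimages, with an overall deficiency of at least three preimages across the entire algorithm. The inequality $m_2 \leq 3(m_3 - 1)$ then follows immediately upon summing over all 3-v.c.'s.

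To define $\Phi$, I first encode the algorithm as an operation tree (Definition \ref{def-op-tree}): a rooted binary tree whose internal nodes record the 3-v.c. operations performed by the algorithm, together with the branching induced whenever a bridge removal, a sole atom removal, or a 3-v.c. operation disconnects the current molecule into multiple components (each component is then processed independently by the algorithm, and so it forms an independent subtree). Given a 2-v.c. operation on an atom $v$ of degree 2, $\Phi$ sends it to the 3-v.c. that is the nearest ancestor in the operation tree responsible for reducing $v$ to degree 2 (for Operations 5--8) or for creating the double bond incident to $v$ (for Operation 0).

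The three-to-one bound will follow from the structural observation that each 3-v.c. removes an atom of degree 3 and therefore can alter the degrees of at most three other atoms in the current molecule. The invariants in Propositions \ref{prop-inv}, \ref{prop-deg2}, and \ref{prop-sole} control the number of degree 2 atoms that can be created by each type of 3-v.c., and Proposition \ref{prop-triangle} rules out the triangle-with-double-bond and triangle-with-triple-bond configurations that could otherwise allow a single 3-v.c. to have four or more descendant 2-v.c.'s via a shortcut. The deficiency of 3 arises from the root of the operation tree: by assumption (Section \ref{subsec-assumptions}) the initial molecule has no degree 2 atoms, so the first 3-v.c. executed cannot be the image of any 2-v.c., and similarly the first 3-v.c. executed within each freshly disconnected component contributes to the deficiency until the total missing count reaches at least three.

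The main obstacle will be rigorously establishing the 3-to-1 bound on the fibers of $\Phi$. The difficulty is that 2-v.c. operations themselves reduce atom degrees, so the chain of causation tracing a given 2-v.c. back to its 3-v.c. ancestor passes through intermediate 2-v.c. operations and could in principle cross between branches of the operation tree, inflating some fiber. To handle this I will exploit the strict priority ordering of the algorithm (Operation 0 first, then bridges, then the remaining 2-v.c.'s in the prescribed order) to argue that each causal chain stays within a single subtree of the operation tree, so that the ancestor 3-v.c. assigned by $\Phi$ is well defined and the count per 3-v.c. never exceeds three. Combined with the boundary/deficiency contribution from the roots of the operation tree (or of its disconnected subtrees), this yields the desired bound.
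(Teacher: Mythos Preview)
Your plan diverges from the paper's argument in a way that leaves a genuine gap. The paper's map $\Phi$ is \emph{injective}, not three-to-one, and its target $T_3$ consists of three-vector counting tuples \emph{together with} bridge tuples---crucially, each bridge operation contributes three tuples to $T_3$ (one in the parent node and one $1_0$ in each child of the operation tree), so $|T_3| = m_3 + 3m_0$. The paper then shows that for each of the $m_0+1$ leaves of the operation tree there is one element of $T_3$ not in the image of $\Phi$, giving $m_2 \le m_3 + 3m_0 - (m_0+1) = m_3 + 2m_0 - 1$, and only then invokes Proposition~\ref{prop-bridge-op} ($m_0 \le m_3 - 1$) to conclude $m_2 \le 3(m_3-1)$. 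The factor of three thus comes from the bridge bookkeeping combined with $m_0 < m_3$, not from any three-to-one fiber bound.

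Your causal-ancestry map, by contrast, is not well defined as stated: when the atom removed by an Operation~5--8 was reduced to degree~2 by a preceding Operation~0 (which is exactly what happens in the saturating molecule of Figure~\ref{fig-eg-molecule}), you must trace further back, and a single three-vector counting can sit at the root of a cascade $3,0,6,3,6,\dots$ of two-vector countings. The observation that a 3-v.c.\ alters at most three neighboring degrees bounds only the \emph{immediate} descendants, not the total fiber through such cascades; nothing in your outline prevents one 3-v.c.\ from acquiring more than three causal descendants. Likewise, your deficiency-of-three claim (``the first 3-v.c.\ in each component contributes\dots\ until the total reaches three'') is not an argument---in the paper the deficiency is exactly $m_0+1$, tied to the leaf structure of the operation tree, and it is the separate inequality $m_0 \le m_3-1$ that converts this into the stated bound. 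You should restructure around an injective $\Phi$ into the enlarged target $T_3$ and use Proposition~\ref{prop-bridge-op} at the end.
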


\begin{remark}\label{rmk-bad-molecule}
The bound (\ref{eq-bound}) is the primary estimate allowing us to reach times close to $\alpha^{-\frac{5}{4}}$, but is also the reason that the techniques in this paper do not allow us to reach $T_{\mathrm{kin}}$ as it is not possible to improve on the bound on $m_2$ for the molecule in Figure \ref{fig-eg-molecule}. Note that Figure \ref{fig-bad-mol} contains a molecule that theoretically saturates \eqref{eq-bound}, although this molecule is not allowed at this stage as the double bonds which are not pseudo-hyperchains cannot all be CL. However, we use this molecule throughout this section in Example \ref{ex-counting} and \ref{ex-mapping} to illustrate various aspects of the algorithm as well as provide clarity on what improvements need to be made to exclude such a molecule explicitly.
\end{remark}

\begin{proof}[Proof of Proposition \ref{prop-rigidity} assuming Proposition \ref{prop-2vc-bound}]
The equation for $\chi$ is $2m_3 + m_2 = n$ and by Proposition \ref{prop-2vc-bound}, $5m_3 - 3 \geq n$. So, 
\begin{align*}
\mathfrak D &\lesssim C^n (L^{2 + \theta}T^{-1}D^{2-\sigma})^{m_3}L^{m_2} \\
&\lesssim C^n L^n D^{n(2 - \sigma)} T^{-m_3} \\
& \lesssim C^n L^n D^{n(2-\sigma)}T^{-\frac{n}{5} - \frac{3}{5}}.
\end{align*}
Note that Proposition \ref{prop-2vc-bound} does not cover all molecules considered in Proposition \ref{prop-rigidity}. However, we handle these cases in Remark \ref{rmk-remove-assumptions}. 
\end{proof}

\section{Proof of Proposition \ref{prop-2vc-bound}} \label{sec-opbound}
Here, we prove Proposition \ref{prop-2vc-bound} by mapping each two-vector counting operation to a three-vector counting operation or bridge operation in a way which is at most 1-1 in three-vector counting operations and 3-1 in bridge operations.

\subsection{Operation trees} We first build up the necessary structure on operations.
\begin{definition}{(Operation trees)}\label{def-op-tree} In order to keep track of the order of operations in a meaningful way, we will use a binary tree. We construct the tree and label its nodes in conjunction with performing the algorithm in the following way: 
\begin{enumerate}
\item Before starting the algorithm, create a root $r$, setting $i_r = 1$.
\item If you are performing Operation $o$, assume that the tree contains a leaf corresponding to each component of the molecule, each of which has a counter $i_n$. Record Operation $o$ as $o_{i_n}$ in the leaf corresponding to its component following all other operations at the leaf. Then, if $o \neq 9$, increase $i_n$ by 1. 
\item If $o = 1$, record the operation as above and then from that leaf, create two separate child branches, $n_1$ and $n_2$, for each newly created component. In each of these children, record $1_0$. Then, set $i_{n_1} = i_{n_2} = 1$. 
\end{enumerate}
For clarity in referring to specific nodes later on, we number each node. Set $r = 1$ for the root. For a node $n$ with children $n_1$ and $n_2$ from left to right, set $n_1 = 2n$ and $n_2 = 2n+1$.
\end{definition}

\begin{example} \label{ex-counting}
A molecule and its corresponding operation tree can be seen in Figure \ref{fig-bad-mol}. Each node is  labelled with its node number for clarity and contains the operations performed in order as a list. 
\end{example}

\begin{figure}
\begin{tikzpicture}[scale = .9]
    \tikzstyle{every node} = [circle, draw = black]
    \node (1) at (-5,0) {};
    \node (2) at (-4,0) {};
    \node (3) at (-3,0) {};
    \node (4) at (-2,0) {};
    \node (5) at (-1,0) {};
    \node (6) at (0,0) {};
    \node (7) at (1,0) {};
    \node (8) at (2,0) {};
    \node (9) at (3,0) {};
    \node (10) at (4,0) {};
    \node (11) at (-2.5,1) {};
    \node (12) at (1.5,1) {};

    \draw[->,  thick] (1.25) -- (2.155);
    \draw[->, thick] (2.205) -- (1.335);
    \draw[->,  thick] (2) -- (3);
    \draw[->,  thick] (3.25) -- (4.155);
    \draw[->, thick] (4.205) -- (3.335);
    \draw[->,  thick] (4) -- (5);
    \draw[->,  thick] (5.25) -- (6.155);
    \draw[->, thick] (6.205) -- (5.335);
    \draw[->,  thick] (6) -- (7);
    \draw[->,  thick] (7.25) -- (8.155);
    \draw[->, thick] (8.205) -- (7.335);
    \draw[->,  thick] (8) -- (9);
    \draw[->,  thick] (9.25) -- (10.155);
    \draw[->, thick] (10.205) -- (9.335);
    \draw[->,  thick] (3) -- (11);
    \draw[->,  thick] (11) -- (2);    
    \draw[->,  thick] (5) -- (11);    
    \draw[->,  thick] (11) -- (4);
    \draw[->,  thick] (7) -- (12);
    \draw[->,  thick] (12) -- (6);    
    \draw[->,  thick] (9) -- (12);    
    \draw[->,  thick] (12) -- (8);
    \draw[->,   thick] (1.270) to [out=-15,in=-165] (10.270);
\end{tikzpicture}
\hspace{.5cm}
\begin{tikzpicture}[level distance=1.5cm,
  level 1/.style={sibling distance=3.7cm},
  level 2/.style={sibling distance=2cm}]
\tikzstyle{hollow node}=[draw,inner sep=2]
\node[hollow node]{1: $3_1$, $0_2$, $6_3$, $3_4$, $5_5$, $1_6$}
    child{node[hollow node]{2: $1_0$, $9_1$}}
    child{node[hollow node]{3: $1_0$, $0_1$, $6_2$, $3_3$, $5_4$, $1_5$}
        child{node[hollow node]{6: $1_0$, $9_1$}}
        child{node[hollow node]{7: $1_0$, $9_1$}}
    };
\end{tikzpicture}
\caption{Operation tree for a molecule}
\label{fig-bad-mol}
\end{figure}
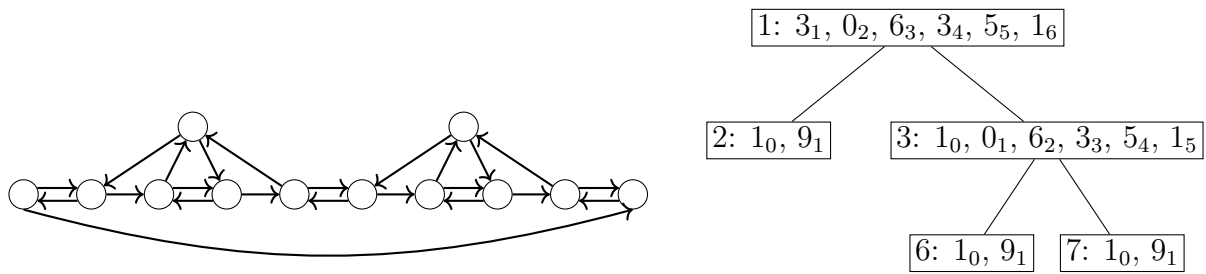

\begin{remark}\label{rmk-op-tree}
Each instance of an operation other than Operation 1 corresponds to one record listed inside a node of the operation tree. Each instance of Operation 1 corresponds to three records listed in three separate nodes of the operation tree.
\end{remark}

\begin{proposition}{(Leaf property)} \label{prop-leaf}
Each leaf of an operation tree is one of the following: 
\begin{enumerate}
\item A node containing $1_0, 9_1$ corresponding to only Operation 9 being performed in the component. 
\item A node $n$ containing at least 3 operations, the last of which are $2_{i_{n-1}}, 9_{i_n}$.
\end{enumerate}
\end{proposition}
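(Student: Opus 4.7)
The plan is to classify each leaf of the operation tree according to what immediately precedes its terminating record, with Proposition \ref{prop-sole} as the key input. First, I would observe that by Definition \ref{def-op-tree} a node acquires children only when Operation $1$ is performed at it; consequently within any leaf the records consist of (i) the initial marker $1_0$ if the leaf is not the root, followed by (ii) a sequence of records $o_i$ with $o\in\{0,2,3,4,5,6,7,8,9\}$. Since each leaf follows its component until that component is empty, the final record must remove the last remaining atom, and only Operation $9$ does this; hence every leaf terminates in $9_{i_n}$.

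Next, I would apply Proposition \ref{prop-sole}: the sole atom disposed of by this final Operation $9$ must have been produced by Operation $1$ or Operation $2$. If it was produced by Operation $1$, that Operation $1$ cannot appear in the current leaf (it would have created children there), so it must be the Operation $1$ recorded at the parent. The newly created component is then already a single atom upon formation, so no other non-trivial record can appear in the leaf between its initial $1_0$ and the final $9_1$; this is case (1). Otherwise the sole atom was produced by an Operation $2$ within the same leaf, which must be the immediate predecessor of the final Operation $9$ because no other operation of the algorithm applies to a degree-$0$ atom and Operation $9$ is immediately triggered once one exists. This yields the ending $2_{i_{n-1}}, 9_{i_n}$ required in case (2).

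Finally, I would address the ``at least three records'' clause in case (2). A non-root leaf always carries the initial $1_0$, so its records already number at least $1_0, 2_{i_{n-1}}, 9_{i_n}$. The only remaining possibility is a root leaf whose full content is $2_1, 9_2$, which would force the starting molecule to consist of two atoms joined by a triple bond. I would rule this out using the structural hypotheses of Section \ref{subsec-assumptions} together with the pre-processing step, or equivalently treat it as a trivial base case in Proposition \ref{prop-rigidity} where the counting bound is verified by hand. The main obstacle is exactly this edge case, together with carefully verifying that in case (2) the Operation $2$ really does directly produce the sole atom subsequently removed by Operation $9$, with no records for the same component intervening; beyond that the statement is an immediate consequence of Proposition \ref{prop-sole} and the construction of the operation tree.
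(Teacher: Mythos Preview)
Your argument is correct and follows the same approach as the paper: both use Proposition~\ref{prop-sole} to conclude that the terminal Operation~$9$ must be preceded either by the parent's Operation~$1$ (giving case~(1)) or by an Operation~$2$ in the same leaf (giving case~(2)), with the paper's proof simply being much terser. Your explicit treatment of the root-leaf edge case is a worthwhile addition that the paper glosses over; note however that the structural hypotheses of Section~\ref{subsec-assumptions} and the pre-processing step do \emph{not} exclude the two-atom triple-bond molecule (it contains no chains to splice and both atoms have degree~$3$), so your first proposed fix does not go through---but your second option, handling it as a trivial base case where $m_2=0\le 3(m_3-1)=0$, is exactly the right resolution.
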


\begin{proof}
A component of the molecule is completely removed once no atoms remain. So, the last operation must be Operation 9 as all other operations necessitate multiple atoms and remove at most one. Due to Proposition \ref{prop-sole}, only Operation 1 or 2 can create a sole atom, which is necessarily in its own component. 
\end{proof}

\begin{definition}{(Operation tuple)}
An \textit{operation tuple} $(n, k, o)$ of an operation tree is a triple of a node $n$ in the operation tree as well as a pair $(k,o)$ such that Operation $o_k$ is listed in node $n$ of the operation tree. An operation tuple containing Operation $o$ (respectively, one of Operations $o, \ldots, o + n$), we denote as an ($o$)-tuple (respectively, $(o :(o + n))$-tuple).

Denote $T$ the space of all such tuples corresponding to an operation tree. Let $T_3$ be the set of such tuples where $o \in \{1, 2, 3, 4\}$ corresponding to 3-vector counting operations and bridge operations, and $T_2$ to be the set of tuples where $o \in \{0, 5, 6, 7,8\}$ corresponding 2-vector counting operations. Note that $T_3$ contains $m_3 + 3m_0$ elements due to Remark \ref{rmk-op-tree}.
\end{definition}

\begin{definition}{(Partial ordering of tuples)}
Define a partial order on operation tuples of an operation tree so that $(n_1, k_1, o_1) < (n_2, k_2, o_2)$ if 
\begin{itemize}
    \item The node $n_2$ is a descendant of the node $n_1$ in the operation tree, or
    \item $n_2 = n_1$ and $k_1 < k_2$. 
\end{itemize}
We say that $(n_1, k_1, o_1)$ is \textit{before} $(n_2, k_2, o_2)$ in the operation tree, and similarly refer to $(n_2, k_2, o_2)$ as being \textit{after} $(n_1, k_1, o_1)$ in the operation tree. If $o_1 \neq 1$, $(n_2, k_2, o_2)$ is the \textit{next tuple} to $(n_1, k_1, o_1)$ if there is no other distinct operation tuple $(n_3, k_3, o_3)$ such that {$(n_1, k_1, o_1) < (n_3, k_3, o_3) < (n_2, k_2, o_2)$}. Similarly, we can say that $(n_2, k_2, o_2)$ is \textit{immediately preceded} by the Operation tuple $(n_1, k_1, o_1)$, now without the restriction that $o_1 \neq 1$. 
\end{definition}

\begin{remark} Due to our node-numbering convention, we can reconstruct the operation tree from the operation tuples. 
\end{remark}

\subsection{Operation map} Now we are prepared to define the operation map. 
\begin{definition}{(Mapping of 2 v.c.)}\label{def-mapping}
Define $\Phi: T_2 \rightarrow T_3$ by mapping $(n,k,o) \in T_2$ using the following steps: 
\begin{enumerate}
\item  If $o \in \{5, 6, 7,8\}$, map $(n,k,o)$ to the most recent tuple in $T_3$ before it. 
\item If $o = 0$, the mapping is more complex: 
\begin{enumerate}
\item If the next tuple is not a (5:8)-tuple, then we map $(n,k,o)$ to the most recent tuple in $T_3$ before it. 
\item Otherwise, if the next tuple is a (5)-tuple, we map $(n,k,o)$ to the next tuple in $T_3$ after it. 
\item Otherwise, the next tuple is a (6)-tuple. We map $(n,k,o)$ to the next tuple in $T_3$ which is not immediately preceded by a (6)-tuple.
\end{enumerate}
\end{enumerate}
\end{definition}

\begin{example}\label{ex-mapping}
Taking the molecule and operation tree in Example \ref{ex-counting}, $\Phi$ is as follows: 
\begin{align*}
(1, 2, 0) &\mapsto (1, 6, 1), && (3, 1, 0) \mapsto (3, 5, 1), \\
(1, 3, 6) &\mapsto (1, 1, 3), && (3, 2, 6) \mapsto (3, 0, 1),\\
(1, 5, 5) &\mapsto (1, 4, 3), && (3, 4, 5) \mapsto (3, 3, 3).
\end{align*}
\end{example}

\begin{proposition}{(Well-defined and Injective)} \label{prop-defined-map} The map $\Phi$ defined in Definition \ref{def-mapping} is a well-defined injective map of $T_2$ into $T_3$. The map is not surjective, with there being an operation tuple for each leaf of the operation tree not in the image of $\Phi$.
\end{proposition}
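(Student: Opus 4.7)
The plan is to verify well-definedness, injectivity, and the per-leaf non-surjectivity by a combined structural and case analysis of the operation tree, exploiting the algorithm's priority rules together with Propositions \ref{prop-inv} and \ref{prop-sole}.

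For well-definedness, two observations are decisive. First, the initial molecule of Section \ref{subsec-assumptions} has only degree 3 and degree 4 atoms, so Ops 0, 5, 6, 7, 8 (each requiring a degree-$2$ atom) cannot be the first record at the root, forcing the root to open with a tuple in $T_3 = \{\text{Ops }1,2,3,4\}$. Second, every non-root node begins with $1_0 \in T_3$. Together these guarantee a ``most recent $T_3$ tuple before $t$'' for every $t \in T_2$, settling the $(5:8)$ clause and Op 0 case (a). For Op 0 cases (b) and (c) we instead need a subsequent $T_3$ tuple; since the algorithm is finite, Proposition \ref{prop-inv} prevents a later $(5:8)$ from reintroducing Op 0, and Proposition \ref{prop-sole} forbids $(5:8)$ from creating a sole atom, so a $T_3$ tuple must eventually appear in the same branch. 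For case (c), the additional ``not immediately preceded by $(6)$'' requirement is handled by following the maximal contiguous run of $(6)$-tuples after $t$ and locating the first $T_3$ tuple beyond it.

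Injectivity is proved by case analysis on the direction of each map. If both $t_1 < t_2$ map backward, a collision forces no $T_3$ tuple strictly between them; using that any Op 0 in between falls under case (a) (which also maps backward to the same $T_3$), I inspect the operation immediately preceding $t_2$ and invoke the priority rules to derive $t_1 = t_2$. For two forward Op 0 maps, distinct Op 0s already have distinct ``next'' tuples in the partial order, and the case (c) exclusion keeps consecutive case (c) maps apart. For a forward Op 0 paired with a backward $(5)$ or $(6)$, cases (b) and (c) are specifically engineered so that the Op 0 target lies past the $T_3$ claimed by the backward map. I expect the main technical hurdle here to be case (c) in the presence of a long run of $(6)$-tuples: one must confirm that the forward image of Op 0 and the backward image of each subsequent $(6)$ land on distinct $T_3$ tuples, which will require a careful local analysis of the run.

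For non-surjectivity, I will exhibit one explicit missing $T_3$ tuple per leaf of the operation tree. By Proposition \ref{prop-leaf}, either leaf $n$ contains only $\{1_0, 9_1\}$, for which $(n,0,1)$ is the natural candidate, or it ends in $\{2_{i_{n-1}}, 9_{i_n}\}$, for which $(n, i_{n-1}, 2)$ is the candidate. In each case I argue the candidate is not the image of any backward map (since no $(5:8)$ or Op 0 follows it in the same branch and $n$ has no descendants) nor of any forward Op 0 (whose image cannot overshoot an intermediate $T_3$ tuple such as the parent Op 1's record at the branching ancestor). This yields $m_0 + 1$ distinct missing elements; combined with injectivity it gives $|T_2| \leq |T_3| - (m_0+1) = m_3 + 2m_0 - 1$, which with Proposition \ref{prop-bridge-op} closes Proposition \ref{prop-2vc-bound}.
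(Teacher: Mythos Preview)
Your overall architecture (backward vs.\ forward maps, node-locality, per-leaf missing tuple) matches the paper's, but the proposal has two genuine gaps that prevent it from going through.

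\textbf{Well-definedness of case (c).} The definition of $\Phi$ asserts that if the tuple following a $(0)$-tuple lies in $(5{:}8)$ and is not a $(5)$-tuple, then it is a $(6)$-tuple. You never rule out $(7)$ or $(8)$ here; the paper does this via Proposition~\ref{prop-triangle} (a $(0)$-tuple followed by Operation~7 or~8 would force one of the forbidden triangle configurations). Without this, case~(c) is not exhaustive and $\Phi$ is not defined on all of $T_2$. Relatedly, your description of case~(c) as ``following the maximal contiguous run of $(6)$-tuples'' is not what the definition says and is structurally wrong: after a $(6)$-tuple the algorithm forces Operation~3, so the pattern is $[0][6][3][6][3]\cdots[5][T_3]$, an alternating sequence, not a contiguous run of $(6)$'s. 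The correct target is the first $T_3$ not immediately preceded by a $(6)$, which in this pattern is the $T_3$ following the terminal $(5)$. You also need the observation that Operation~6 cannot create a bridge (so a $(6)$-tuple never immediately precedes a $(1)$-tuple) to keep the case~(c) image inside the same node.

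\textbf{Injectivity: the missing structural fact.} Your forward/backward dichotomy is fine for the backward--backward case, but for forward--backward collisions you write that cases (b) and (c) are ``specifically engineered'' to avoid them and flag the analysis as an open ``technical hurdle.'' This is precisely where the content of the proof lives. The paper's argument is that after the pattern $[0][5]$ (splitting into Operations 5a--5d) or after the alternating $[0]([6][3])^j[5]$ loop, the \emph{next two} tuples are both in $T_3$. Hence the forward image of the $(0)$-tuple is a $T_3$ element that is itself immediately followed by another $T_3$ element, so no later $T_2$ tuple can map backward to it. Establishing this requires the case analysis of Figures~\ref{fig-op5ad}--\ref{fig-op6}: e.g.\ Operation~5a forces an immediate Operation~3, after which an odd-degree atom survives and triggers another $T_3$ step; Operation~5b creates a bridge; Operation~5c forces Operation~4 which leaves two odd-degree atoms; and the Operation~6 loop must terminate in Operation~5. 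Your proposal contains none of this, and without it the forward--backward injectivity claim is unsupported. The same structural fact is what makes the terminal $(2)$-tuple in a leaf unreachable by a forward $(0)$-map in your non-surjectivity argument; your appeal to ``the parent Op~1's record at the branching ancestor'' is not the right obstruction, since $\Phi$ never leaves the node.
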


\begin{proof}
To see that $\Phi$ is well-defined, note that a (7:8)-tuple cannot be immediately preceded by a (0)-tuple due to Proposition \ref{prop-triangle}. Additionally, Proposition \ref{prop-leaf} and the fact that the algorithm must begin with a three-vector counting implies that each element of $T_2$ will be mapped to an element of $T_3$. In fact, each element of $T_2$ must be mapped to an element of $T_3$ with the same node $n$. This is because each non-leaf node in the operation tree has its smallest and largest tuple being elements of $T_3$ (for a leaf the largest tuple is a (9)-tuple immediately preceded by a tuple in $T_3$). Note also that Step 2c of the mapping cannot map a (6)-tuple to a tuple at a different node since Operation 6 does not create a bridge, so a (6)-tuple cannot immediately precede a (1)-tuple.

So, we only need to show injectivity at the level of each node of the operation tree. We look at each step of the $\Phi$ map. Starting with Steps 1 and 2a, note that Proposition \ref{prop-inv} ensures that a (0)-tuple is immediately preceded by a (1) or (3)-tuple. Also, a (5:8)-tuple must be immediately preceded by either a (0)-tuple or element of $T_3$. So, these steps themselves cannot map multiple elements of $T_2$ to the same element of $T_3$. 

Now, we consider Step 2b of the operation map and split into each case of Operation 5: 
\begin{itemize}
\item \textbf{Operation 5a or 5d}: As seen in Figure \ref{fig-op5ad}, a (5)-tuple originating from Operation 5a (respectively, 5d), must be immediately followed by an (3)-tuple (respectively (1) or (4)-tuple). After the corresponding operations are performed, the molecule cannot contain atoms of degree 2 with double bonds and must contain at least one atom of odd degree. So, in this case the relevant sequence of tuples is a (0)-tuple followed by a (5)-tuple followed by at least two elements of $T_3$. So the (0)-tuple is mapped to a distinct element of $T_3$ from those mapped to above.
\begin{figure}[h]
\begin{tikzpicture}[scale = .7]
    \hspace{-2cm}
    \tikzstyle{hollow node}=[circle,draw = black]
    \tikzset{square node/.style = {rectangle,draw=black,inner sep=20}}
    \node[hollow node] (2) at (5.3, -2) {};
    \node[hollow node] (3) at (6.3, -1) {};
    \node[hollow node] (4) at (6.3, -3) {};

    \draw[thick] (2) -- (3);
    \draw[thick] (2) -- (4);
    \draw[thick] (4.65) -- (3.295);
    \draw[thick] (3.245) -- (4.115);
    \draw[thick] (3) -- (7.3, -1);
    \draw[thick] (4) -- (7.3, -3);

    \hspace{4cm}

    \tikzstyle{hollow node}=[circle,draw = black]
    \tikzset{square node/.style = {rectangle,draw=black,inner sep=20}}
    \node[hollow node] (2) at (5.3, -2) {};
    \node[hollow node] (3) at (6.3, -1) {};
    \node[hollow node] (4) at (6.3, -3) {};

    \draw[thick] (2) -- (3);
    \draw[thick] (2) -- (4);
    \draw[thick] (3) -- (7, -.2);
    \draw[thick, dotted] (3) -- (7.5, -1);
    \draw[thick, dotted] (3) -- (7, -1.8);
    \draw[thick] (4) -- (7, -2.2);
    \draw[thick, dotted] (4) -- (7.5, -3);
    \draw[thick, dotted] (4) -- (7, -3.8);
    
\end{tikzpicture}
\caption{Operation 5a or 5d is performed.}
\label{fig-op5ad}
\end{figure}
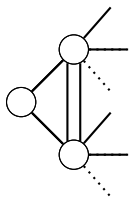

\item \textbf{Operation 5b}: Operation 5b creates a bridge, so the following tuple is a (1)-tuple, which ends a node of the operation tree and therefore is a distinct element of $T_3$ from those mapped to above.

\item \textbf{Operation 5c}: As seen in Figure \ref{fig-op5c}, as part of Operation 5c, two atoms (denoted b and c) become odd degree (in fact, they must be degree 3, or else we can classify the operation as 5b rather than 5c). So, Operation 5c is immediately followed by Operation 4 at atom b or c, suppose atom b. In this case, atom b must be connected to two atoms, which become odd degree. So, the relevant sequence of tuples is a (0)-tuple followed by a (5)-tuple followed by at least two elements of $T_3$, and the (0)-tuple is mapped to a distinct element of $T_3$ from those mapped to above.
\begin{figure}[h!]
\begin{tikzpicture}[scale = .7]

    \tikzstyle{hollow node}=[circle,draw = black]
    \tikzset{square node/.style = {rectangle,draw=black,inner sep=20}}

    \node[hollow node] (2) at (5.3, -2) {};
    \node[hollow node] (3) at (6.3, -1) {\tiny{b}};
    \node[hollow node] (4) at (6.3, -3) {\tiny{c}};
    
    \draw[thick] (2) -- (3);
    \draw[thick] (2) -- (4);
    \draw[thick] (3) -- (4);
    \draw[thick, dotted] (3) -- (7.3, -.7);
    \draw[thick, dotted] (3) -- (7.3, -1.3);
    \draw[thick, dotted] (4) -- (7.3, -2.7);
    \draw[thick, dotted] (4) -- (7.3, -3.3);
    
\end{tikzpicture}
\caption{Operation 5c is performed.}
\label{fig-op5c}
\end{figure}
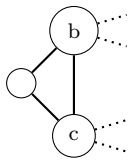
\end{itemize}

Lastly, we consider Step 2c of the operation map. At the point at which Operation 6 is performed in the algorithm, the relevant atomic group is represented in Figure \ref{fig-op6}. Once Operation 6 is performed, we are forced to perform Operation 3 on atom b, at which point we must perform a two-vector counting operation, specifically Operation 5 or 6 due to Proposition \ref{prop-triangle} and the double bond between atoms b and d. If Operation 6 is then performed, we are in a loop of performing Operation 6 and 3, which eventually must terminate in Operation 5. So, the relevant sequence of tuples is a (0)-tuple followed by repeating instances of a (6)-tuple and (3)-tuple followed by a (5)-tuple, which is followed by at least two elements of $T_3$ using the analysis above. So, the (0)-tuple is mapped to a distinct element of $T_3$ from those mapped to above. 

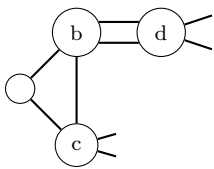
\begin{figure}[h]
\begin{tikzpicture}[scale = .75]
    \tikzstyle{hollow node}=[circle,draw = black]
    \tikzset{square node/.style = {rectangle,draw=black,inner sep=20}}

    \node[hollow node] (2) at (5.3, -2) {};
    \node[hollow node] (3) at (6.3, -1) {\tiny{b}};
    \node[hollow node] (5) at (7.8, -1) {\tiny{d}};
    \node[hollow node] (4) at (6.3, -3) {\tiny{c}};
    
    \draw[thick] (2) -- (3);
    \draw[thick] (2) -- (4);
    \draw[thick] (3) -- (4);
    \draw[thick] (5) -- (8.7, -.7);
    \draw[thick] (5) -- (8.7, -1.3);
    \draw[thick] (3.25) -- (5.155);
    \draw[thick] (5.205) -- (3.335);
    \draw[thick] (4) -- (7, -2.8);
    \draw[thick] (4) -- (7, -3.2);
    
\end{tikzpicture}
\caption{Operation 6 is performed.}
\label{fig-op6}
\end{figure}

To establish that the map is not surjective, recall that each element of $T_2$ is mapped to an element of $T_3$ with the same node. If a leaf of an operation tree is $1_0, 9_1$, there are no corresponding operation tuples in $T_2$, so the operation tuple corresponding to $1_0$ will have nothing mapped to it. Otherwise a leaf of an operation tree ends with $2_{i_{n-1}}, 9_{i_n}$, in which case at the point at which Operation 2 is performed, the component of the molecule is two atoms connected with a triple bond. Due to the priority of operations in the algorithm, the (2)-tuple must be immediately preceded by a (1), (4), or (7)-tuple. Note that a (7)-tuple is always mapped to a tuple preceding it and the other types of tuples are elements of $T_3$. By the above analysis, the (2)-tuple is not in the image of $\Phi$. 
\end{proof}

Now, we may prove Proposition \ref{prop-2vc-bound}.
\begin{proof}[Proof of Proposition \ref{prop-2vc-bound}]
Note that as established in Proposition \ref{prop-bridge-op}, we have that ${m_0 \leq m_3-1}$. By Proposition \ref{prop-defined-map}, there are $m_0 + 1$ elements of $T_3$ not in the image of $\Phi$. So, given that $T_3$ contains $m_3 + 3m_0$ elements, 
\begin{equation} \label{eq-counting-bound}
m_2 \leq m_3 + 3m_0 - (m_0 + 1) = m_3 + 2m_0 -1\leq 3 m_3-3.
\end{equation}
\end{proof}

\begin{remark}\label{rmk-remove-assumptions}
Our algorithm does not take into account all relevant molecules in Proposition \ref{prop-rigidity} due to the simplifying assumptions and pre-processing step in Section \ref{subsec-assumptions}. However, we can easily modify the algorithm to take these molecules into account: 
\begin{enumerate}
\item Assumption 1: The molecule $\M(\Q)$ may contain 1 atom of degree 2 rather than 2 of degree 3. However, then $\Q$ has one root the trivial tree paired with a child of the other root. If the root is fully degenerate, we may begin the algorithm by removing this atom and its two bonds, with $\mathfrak C = 1$, creating a molecule with two degree three atoms (unless the bond was a double bond, in which case all atoms in the chain are fully degenerate and can similarly be removed). Otherwise, Stage 1 or the pre-processing step changed $\M(\Q)$ from having 2 degree 3 atoms to 1 degree 2 atom. Then, we may instead splice all but one of the double bonds and begin the algorithm by removing the two degree 3 atoms and the molecule still has 2 atoms of degree 3.
\item Assumption 2: The molecule may contain degenerate atoms: 
\begin{enumerate}
\item An atom could be fully degenerate with a self-connecting bond (note that splicing does not create degenerate atoms with self-connecting bonds that are not fully degenerate). In this case, we add Operation 10 to the algorithm which removes an atom with a single self-connecting bond, corresponding to $\Delta \chi = -1$ and $\mathfrak C = 1$. When performing the algorithm, each atom with a self-connecting bond will eventually become degree 3 (CN double bonds cannot occur at an atom with a self-connecting bond and CL double bonds are part of pseudo-hyperchains), at which point the non self-connecting bond is a bridge removed using Operation 1 and then we perform Operation 10. The result of Proposition \ref{prop-bridge-op} in this case will change to $m_0 < m_3 + m_4$, where $m_4$ is the number of atoms with self-connecting edges. The operation tree structure is left unchanged other than that some leaves will contain Operation 10 rather than Operation 9. Therefore, we now have $m_2 \leq 3(m_3 - 1) + 2m_4$. So, now the proof of Proposition \ref{prop-rigidity} assuming Proposition \ref{prop-2vc-bound} relies on the following estimate, using $\gamma < 1$:
\begin{align*}
L^{2m_3 + m_2}T^{-m_3} &= L^{n - m_4} T^{-\frac{n}{5} - \frac{3}{5} + \frac{3}{5}m_4}\\
&\lesssim L^n T^{-\frac{n}{5} - \frac{3}{5}} \left(L^{m_4(-1 + \frac{3}{4} \gamma)}\right) \\
& \lesssim L^n T^{-\frac{n}{5} - \frac{3}{5}}.
\end{align*}
\item An atom could be (fully) degenerate with no self-connecting bonds as a result of Stage 1 or the pre-processing step, in which case $\{k_1, k_3\} = \{k_2, k\}$. If a bridge or two-vector counting operation is performed involving these bonds, $\mathfrak C$ is at worst unchanged. If any of these bonds are counted via a three-vector counting operation, then now $\mathfrak C = L$. If Stage 1 made the atom degenerate, we will have gained at least $L^\theta \alpha T^{\frac{1}{2}} \leq T^{-\frac{3}{10}}$ from splicing. So, it suffices to note that $LT^{-\frac{3}{10}} \ll L^2T^{-1}$ as $T \ll L^{\frac{5}{4}}$. If instead the pre-processing step made the atom degenerate, we cannot use cancellation but can modify the pre-processing step to remove all but one double bond and replace the three-vector counting with a single five-vector counting with $\mathfrak C = L^2T^{-1}D^{2 - \sigma}$, exploiting the degeneracy. 
\end{enumerate}
\item Pre-processing step: The molecule may contain double bonds that were removed as part of the pre-processing step. Note that for a decoration of the rest of the molecule, we may count each of these double bonds with $\mathfrak C = LT^{-\frac{1}{2}}D^{2 - \sigma}$ from Proposition \ref{prop-vc}. The reason we may appeal to these counting estimates is that a double bond must contain at least one LP bond, which we may restrict to have decoration $\leq D$, as we did when we reduced the proof of Proposition \ref{prop-couples} to Proposition \ref{prop-rigidity}. This improved counting estimate is as good as our three vector-counting estimate, and therefore only improves our bound on $\mathfrak D$ for the molecule. 
\end{enumerate}
\end{remark}

\section{The Operator \texorpdfstring{$\Ell$}{L}} \label{sec-opL}
The main goal of this section is to establish Proposition \ref{prop-remainder}. 

\subsection{Kernels of \texorpdfstring{$\Ell$}{L}} In order to define the kernels of $\Ell$, we will need a generalization of trees and couples, repeated from \cite{WKE}:

\begin{definition}\label{def-flower}
A \textit{flower tree} is a tree $\mathcal T$ with one leaf $\mathfrak f$ specified, called the \textit{flower}; different choices of $\mathfrak f$ for the same tree $\mathcal T$ leads to different flower trees. There is a unique path from the root $\mathfrak r$ to the flower $\mathfrak f$, which we call the \textit{stem}. A \textit{flower couple} is a couple formed by two flower trees, such that the two flowers are paired (in particular, they have opposite signs). 

The \textit{height} of the flower tree $\mathcal T$ is the number of branching nodes in the stem of $\mathcal T$. Clearly a flower tree of height $n$ is formed by attaching two sub-trees each time and repeating $n$ times, starting from a single node. We say a flower tree is \textit{admissible} if all these sub-trees have scale at most $N$.
\end{definition}

\begin{proposition} \label{prop-Lm}
Let $\mathscr L$ be defined as in (\ref{eq-L}). Note that $\mathscr L^n$ is an $\R$-linear operator for $n \geq 0$. Define its kernels $(\mathscr L^n)_{k \ell}^\zeta(t,s)$ for $\zeta \in \{\pm\}$ by 
\begin{equation*}
(\mathscr L^n b)_k (t) = \sum_{\zeta \in \{\pm\}} \sum_{\ell} \int_{\R} (\mathscr L^n)_{k \ell}^{\zeta} (t,s) b_{\ell}(s)^{\zeta}\mathrm{d} s.
\end{equation*}
Then, for each $1 \leq n \leq N$ and $\zeta \in \{\pm\}$, we can decompose
\begin{equation}
(\mathscr L^n)^\zeta_{k, \ell} = \sum_{n \leq m \leq N^3} (\mathscr L^n)_{k, \ell}^{m, \zeta},
\end{equation}
such that for any $n \leq m \leq N^3$ and $k,\ell \in \Z_L$ and $t,s \in [0,1]$ with $t > s$, we have 
\begin{equation} \label{eq-Lnm}
\E |(\mathscr L^n)_{k, \ell}^{m, \zeta}(t,s)|^2 \lesssim \langle k - \zeta \ell \rangle ^{-20} (L^{\theta}\alpha T^{4/5})^m L^{40}.
\end{equation}
\end{proposition}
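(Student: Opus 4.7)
The plan is to expand $\Ell^n$ into a sum over admissible flower trees, apply Wick's formula to reduce $\E|\cdot|^2$ to a sum over flower couples, and then adapt the splicing-plus-algorithm machinery of Sections \ref{sec-splice}--\ref{sec-opbound} to the flower-couple setting.

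First, by induction on $n$ using (\ref{eq-L}), I would show
\[
(\Ell^n b)_k(t) = \sum_{\mathcal T} (\J_{\mathcal T}[b])_k(t),
\]
where the sum runs over admissible flower trees $\mathcal T$ of height $n$, and $\J_{\mathcal T}[b]$ is given by (\ref{eq-J}) with the flower leaf $\mathfrak f$ contributing the factor $b_{k_{\mathfrak f}}(t_{\mathfrak f})^{\zeta_{\mathfrak f}}$ (and $t_{\mathfrak f}$ left as a free variable) rather than an initial-data factor. One application of $\Ell$ introduces one new stem branching node together with two sub-trees of order at most $N$, so after $n$ iterations the total number of branching nodes $m$ lies in $[n, n(2N+1)] \subset [n, N^3]$. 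I would then define $(\Ell^n)_{k,\ell}^{m,\zeta}$ as the sub-sum over flower trees of total order exactly $m$ whose flower has sign $\zeta$ and decoration $\ell$.

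Next, I would apply Isserlis (Lemma A.2 of \cite{WKE}) to the product of $g$'s at all non-flower leaves in two copies of the flower-tree expansion with opposite root signs and opposite flower signs, obtaining
\[
\E\bigl|(\Ell^n)_{k,\ell}^{m,\zeta}(t,s)\bigr|^2 = \sum_{\Q} \K_{\Q}(t,s,k,\ell),
\]
where the sum is over flower couples $\Q=\{\T^+,\T^-,\mathscr P\}$ of total order $m$ with the two flowers paired at $\ell$ (with opposite signs matching $\zeta$), both roots decorated by $k$, and $\K_{\Q}(t,s,k,\ell)$ defined analogously to (\ref{eq-KQ}) but with the flower leaves removed from the $n_{\mathrm{in}}$-product. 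Matching (\ref{eq-Lnm}) then reduces to proving $\bigl|\sum_{\Q} \K_{\Q}\bigr| \lesssim \langle k-\zeta\ell\rangle^{-20}(L^\theta \alpha T^{4/5})^m L^{40}$ for this sum.

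To prove this reduced estimate, I would run the Stage 1 splicing of Section \ref{sec-splice} (which depends only on signs and child-pairings at branching nodes, and is therefore unaffected by the flower designation) followed by the Stage 2 algorithm of Section \ref{sec-algorithm}. The essential novelty compared to the regular-couple case is that atoms on the \emph{stem} (the path from root to flower in either $\T^\pm$) can carry decorations as large as $|\ell|$, so the uniform choice $D = L^\theta$ used in Proposition \ref{prop-rigidity} no longer applies at stem atoms. The crucial observation, foreshadowed at the end of Section \ref{subsec-counting-discussion}, is that by Proposition \ref{prop-db} every double bond contains at least one LP bond, which joins two leaves of the couple and hence lies off the stem; the improved two-vector counting estimates of Lemmas \ref{lem-sg2vc}--\ref{lem-3vc} therefore remain available at every double bond encountered by the algorithm, while at non-double-bond single bonds we fall back on $\mathfrak C \lesssim L$. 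The combinatorial count $m_2 \leq 3(m_3 - 1)$ of Proposition \ref{prop-2vc-bound} is unchanged, so the total number of $k$-decorations obeys the analog of (\ref{eq-counting}) with $D = L^\theta$ used for all off-stem factors. The factor $\langle k - \zeta \ell\rangle^{-20}$ is extracted by applying Schwartz decay of $n_{\mathrm{in}}$ to off-stem leaves via the stem-decoration identity
\[
k - \zeta \ell = \sum_{\mathfrak n\in\mathrm{stem}} \zeta_{\mathfrak n}\bigl(\text{signed off-stem children of }\mathfrak n\bigr);
\]
the polynomial slack $L^{40}$ absorbs the $(\log L)^m$ loss from Lemma \ref{lem-osc-int}, the $L^{m\theta}$ loss from the counting algorithm, and the finitely many ($O(N)$) stem-atom counting steps where the general $\mathfrak C \lesssim L$ estimate is used in place of the $D$-dependent improved ones.

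The hard part will be this last step: carefully tracking that the presence of the stem with potentially huge decorations neither breaks the combinatorial bound of Proposition \ref{prop-2vc-bound} nor inflates the $D^{m(2-\sigma)}$ factor beyond what can be absorbed into $L^{40}$. This reduces to a case-by-case verification that every two-vector counting operation of the algorithm acts at a double bond (so that Proposition \ref{prop-db} supplies the required small off-stem LP decoration), together with a clean extraction of $\langle k - \zeta \ell \rangle^{-20}$ from Schwartz decay on the off-stem leaves.
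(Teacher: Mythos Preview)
Your overall strategy matches the paper's: expand $\Ell^n$ as a sum over admissible flower trees, square and apply Isserlis to obtain flower couples, then rerun the splicing-plus-algorithm machinery with the modifications needed for the stem. The extraction of $\langle k-\zeta\ell\rangle^{-20}$ from the off-stem leaves and the accounting of losses into $L^{40}$ are also the same as in the paper.

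However, your last paragraph misidentifies the obstruction and proposes a verification that cannot succeed. It is \emph{not} true that every two-vector counting operation of the algorithm acts at a double bond: Operations 5--8 remove a degree-2 atom with only single bonds. Fortunately this does not matter, because the generic 2vc bound $\mathfrak C\lesssim L$ in Proposition~\ref{prop-vc} has no $D$-dependence at all, so large stem decorations are harmless there. The place where $D$ actually enters is the \emph{three}-vector counting (Operations 2--4), via $\mathfrak C\lesssim L^{2+\theta}T^{-1}D^{2-\sigma}$. The correct observation is that each stem atom carries at most two stem bonds (one PC up, one PC down), hence at least two of its four incident bonds are off-stem with decoration $\lesssim L^\theta$; this is exactly the hypothesis ``at least two of $k,a_1,a_2,a_3\le D$'' of Lemma~\ref{lem-3vc}, so one may keep $D=L^\theta$ throughout and the bound of Proposition~\ref{prop-rigidity} goes through unchanged. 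Separately, you omit two technical losses the paper tracks: the Dirac factor $\boldsymbol\delta(t_{\mathfrak f^p}-s)$ means the $t_{\mathfrak f^p}$-integration is absent in $\widetilde{\K}_{\Q}$ (costing up to $L^{20}$ when repeating Lemma~\ref{lem-osc-int}), and an irregular chain may contain a stem node, in which case the paper shortens the chain before splicing, leaving at most two extra double bonds and an $L^2$ loss.
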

\begin{proof}
Using Proposition 11.2 of \cite{WKE}, note that 
\begin{align}
(\mathscr L^n)_{k, \ell}^{m, \zeta}(t,s) &= \sum_{\mathcal T} \widetilde{\mathcal J}_{\mathcal T}(t,s,k,\ell) \\
\E \left| (\mathscr L^n)_{k, \ell}^{m, \zeta}(t,s)\right|^2 &= \sum_{\mathcal Q} \widetilde{\mathcal K}_{\mathcal Q}(t,s,k,\ell),
\end{align}
where the sum in $\mathcal T$ is taken over flower trees and the sum in $\mathcal Q$ is taken over flower couples $(\mathcal T^+, \mathcal T^-)$, where $\mathcal T$ has sign $\zeta$ and $\mathcal T^{\pm}$ has sign $\pm \zeta$. Additionally $\mathcal T$ and $\mathcal T^{\pm}$ have height $n$ and scale $m$, for $n \leq m \leq (1 + 2N)n < N^3$. For $t \geq s$, the quantities $\widetilde{J}_{\mathcal T}, \widetilde{\mathcal K}_{\mathcal Q}$ are defined similar to (\ref{eq-J}) and (\ref{eq-KQ}):
\begin{equation} \label{eq-Jtilde}
(\widetilde{\mathcal J}_{\mathcal T})_k(t) = \left(\frac{\alpha T}{L}\right)^m \zeta(\mathcal{T}) \sum_{\mathscr D} \epsilon_{\mathscr D} \int_\mathcal{D} \prod_{\mathfrak n \in \mathcal N}e^{\zeta_{\mathfrak n} 2 \pi i Tt_{\mathfrak n}\Omega_{\mathfrak n}} \diff t_{\mathfrak n} \boldsymbol{\delta}(t_{\mathfrak f^p} - s) \prod_{\mathfrak f \neq \mathfrak l \in \mathcal L} \sqrt{n_{\mathrm{in}}(k_{\mathfrak l})}g_{k_{\mathfrak l}}^{\zeta_\mathfrak l}(\omega)\boldsymbol{1}_{k_{\mathfrak f} = \ell},
\end{equation}
and 
\begin{equation}\label{eq-KQtilde}
(\widetilde{\mathcal K}_{\mathcal Q})(t,s,k, \ell)  = \left( \frac{\alpha T}{L}\right)^{2m} \zeta(\mathcal Q) \sum_{\mathscr E} \epsilon_{\mathscr E}\int_\mathcal{E} \prod_{\mathfrak n \in \mathcal N}e^{\zeta_{\mathfrak n} 2 \pi i Tt_{\mathfrak n}\Omega_{\mathfrak n}} \diff t_{\mathfrak n} \prod_{\mathfrak f} \boldsymbol{\delta}(t_{\mathfrak f^p} - s) \prod^+_{\mathfrak f \neq \mathfrak l \in \mathcal L} n_{\mathrm{in}}(k_{\mathfrak l})\boldsymbol{1}_{k_{\mathfrak f} = \ell},
\end{equation}
where $\mathscr D$ (resp. $\mathscr E$) is a $k$-decoration of $\mathcal T$ (resp. $\mathcal Q$). The set $\mathcal D$ is defined in (\ref{eq-D}), while $\mathcal E$ is defined in (\ref{eq-E}), but with $s$ replaced with $t$. The only differences in each are the Dirac factors $\boldsymbol{\delta}(t_{\mathfrak f_p} -s)$, where $\mathfrak f^p$ is the parent of $\mathfrak f$ for both flowers, and $\boldsymbol{1}_{k_{\mathfrak f}} = \ell$. The estimate (\ref{eq-Lnm}) follows from the proof of Proposition \ref{prop-couples} and the following observations: 

(1) If $\mathcal Q$ is an admissible flower couple and $\widetilde{\mathcal Q}$ is congruent to $\mathcal Q$ in the sense of Definition \ref{def-congruent}, then $\widetilde{\mathcal Q}$ is also an admissible flower couple, if we choose its flower to be the image of the flower of $\mathcal Q$. Therefore, we may exploit the same cancellations as in Lemma \ref{lem-splice} and decompose the right hand side of (\ref{eq-Lnm}) into sums of the form of (\ref{eq-splice-exp}). The only thing we must be careful of is if the chain contains the flower, which can be avoided via shortening the chain. This may lead to at most two extra double bonds in the molecule, which will cause at most $L^2$ loss in \eqref{eq-couples}.

(2) Note that $k - \zeta \ell$ is a linear combination of $k_{\mathfrak l}$ for $\mathfrak f \neq \mathfrak l \in \mathcal L$, so the decay factor $\langle k - \zeta \ell \rangle^{-20}$ can be obtained from the factors of ${n_{\mathrm{in}}}(k_{\mathfrak l})$. Additionally, we may replace $\boldsymbol{1}_{k_{\mathfrak f} = \ell}$ by $\psi(L(k_{\mathfrak f - \ell}))$ for some suitable cutoff $\psi$. Then, we may repeat all previous arguments, with a loss of at most $L^3$ in \eqref{eq-couples}. 

(3) The Dirac factor $\boldsymbol{\delta}(t_{\mathfrak f^p} - s)$ means that in (\ref{eq-KQtilde}), we are omitting the integration in $t_{\mathfrak f^p}$ for both flowers $\mathfrak f$. However, this difference will cause at most $L^{20}$ loss in \eqref{eq-couples}. 

(4) Unlike in the proof of Proposition \ref{prop-couples}, we may no longer assume that every assignment in the decoration is $\lesssim L^{\theta}$ for $\theta \ll 1$ as $\ell$ can be arbitrarily large. However, as a result, we may assume that only nodes $\mathfrak n$ along the stem may satisfy $k_{\mathfrak n} \gtrsim L^{\theta}$. Remark \ref{rmk-remove-assumptions} clarifies how this is taken into account in the algorithm.
\end{proof}
\subsection{Proof of Proposition \ref{prop-remainder}}
We first record the following: 

\begin{lemma} \label{lem-finite}
Let $k \in \Z_L$, $0 < \sigma \leq 2$ and $\sigma \neq 1$, and $\zeta \in \{\pm\}$. Consider the function
\begin{equation*}
f_{(k)}^\zeta: m \mapsto |k|^\sigma + \zeta |k + m|^\sigma, \hspace{.5cm} \mathrm{Dom}(f_{(k)}^\zeta) = \{m \in \Z_L: |m| \lesssim L^{\theta}, ||k|^\sigma + \zeta |k + m|^\sigma|\lesssim R\}, 
\end{equation*}
where $R \geq L$. Then there exists $k' \in \Z_L$ and $C = C(\sigma)$, where $|k'| \lesssim R^C$, such that $|f_{(k)}^\zeta- f_{(k')}^\zeta| \leq R^{-1}$ on $\mathrm{Dom}(f_{(k)}^\zeta)$. In particular if $\zeta = +$, $k = k'$ and $|k| \lesssim R^C$. Otherwise, $k' = \min(k, R^C)$ and $|f_{(k)}^\zeta(m)| \lesssim L^2$ on $\mathrm{Dom}(f_{(k)}^\zeta)$.
\end{lemma}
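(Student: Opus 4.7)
The plan is to split into cases by the sign $\zeta$ and the size of $|k|$ relative to a threshold $R^C$ for a constant $C=C(\sigma)$ to be chosen below. The key observation is that the $\zeta=+$ case is essentially trivial, since the sum $|k|^\sigma+|k+m|^\sigma$ cannot be small when $|k|$ is large, so the real content lies in the $\zeta=-$ case, where the difference $|k|^\sigma-|k+m|^\sigma$ can remain moderate even when $|k|$ is huge. The exclusion $\sigma\neq 1$ is dictated by the proof: I will exploit decay of the derivative $\partial_m(k+m)^\sigma$ as $k\to\infty$ when $\sigma<1$, and growth of that derivative (which shrinks the domain) when $\sigma>1$; neither mechanism operates at $\sigma=1$, where the derivative has unit order.

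For $\zeta=+$, if $\mathrm{Dom}(f^+_{(k)})$ is empty there is nothing to prove; otherwise, picking any $m$ in the domain one has $|k|^\sigma\le f^+_{(k)}(m)\lesssim R$, hence $|k|\lesssim R^{1/\sigma}$, and taking $k'=k$ with $C=2/\sigma$ yields the ``in particular'' statement at once. For $\zeta=-$, by the symmetry $(k,m)\mapsto(-k,-m)$ I may assume $k>0$; if $k\le R^C$ I again set $k'=k$, so the substantive case is $k>R^C$. Here $|m|\lesssim L^\theta\ll k$ forces $k+m>0$, and I choose $k'\in\Z_L$ to be a point within distance $1/L$ of $R^C$ with the same sign as $k$. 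I then branch on whether $\sigma<1$ or $\sigma>1$.

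For $0<\sigma<1$, the mean value theorem gives $|k^\sigma-(k+m)^\sigma|\le\sigma(k-|m|)^{\sigma-1}|m|\lesssim k^{\sigma-1}L^\theta$ and the same with $k$ replaced by $k'$; since $\sigma<1$ and $k\ge k'$ imply $k^{\sigma-1}\le (k')^{\sigma-1}$, it follows that $|f_{(k)}^-(m)-f_{(k')}^-(m)|\lesssim (k')^{\sigma-1}L^\theta\lesssim R^{-C(1-\sigma)}L^\theta$, which is $\le R^{-1}$ once $C>(1+2\theta)/(1-\sigma)$, using $R\ge L$. For $1<\sigma\le 2$, the derivative of $m\mapsto k^\sigma-(k+m)^\sigma$ has magnitude $\gtrsim k^{\sigma-1}$, so the domain condition $|f_{(k)}^-(m)|\lesssim R$ forces $|m|\lesssim R/k^{\sigma-1}$; for a nonzero $m\in\Z_L$ this requires $k\lesssim(RL)^{1/(\sigma-1)}$, which for $C>2/(\sigma-1)$ contradicts $k>R^C$ (again using $R\ge L$). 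Hence the domain collapses to $\{0\}$, and both $f_{(k)}^-$ and $f_{(k')}^-$ vanish at $0$.

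The supplementary bound $|f_{(k)}^-(m)|\lesssim L^2$ on $\mathrm{Dom}$ then falls out of the same case analysis: in the only nontrivial regime one has $|k|\lesssim R^C$ and $|m|\lesssim L^\theta$, and MVT gives $|f_{(k)}^-(m)|\le\sigma\max(|k|,|k+m|)^{\sigma-1}|m|\lesssim R^{C(\sigma-1)}L^\theta$, a polynomial bound in $L$ that is $\le L^2$ in the polynomial range of $R$ in which this lemma is applied. The main technical hurdle, and the only real decision to make, is to pin down a single constant $C=C(\sigma)$ that works across both sub-regimes; the choice $C=\max\bigl(3/(1-\sigma),\,3/(\sigma-1)\bigr)$ is comfortably sufficient, and it is this degeneration of $C$ at $\sigma=1$ that explains why $\sigma=1$ must be excluded.
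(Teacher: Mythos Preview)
Your proof is correct and follows the same approach as the paper's: the $\zeta = +$ case is disposed of via $|k|^\sigma \lesssim R$, and for $\zeta = -$ you split on whether $\sigma < 1$ (mean-value decay of the derivative makes $|f^-|$ uniformly small once $k$ is large) or $\sigma > 1$ (growth of the derivative collapses the domain to $\{0\}$ for large $k$). The paper's argument is terser but structurally identical, and your handling of the supplementary $L^2$ bound is as loose as the paper's own proof, which likewise does not make that sub-claim explicit.
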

\begin{proof}
In the case of $\zeta = +$, it is clear that there are at most $R^{1 + \frac{1}{\sigma}}$ values of $k$ that we must consider. If $\zeta = -$, we separate into two cases, when (a) $0 < \sigma < 1$, and (b) $1 < \sigma \leq 2$: 

\noindent(a) For large $k$, $$|f_{(k)}^-(m)| \lesssim m \cdot \mathrm{min}(|k|, |k+m|)^{\sigma - 1},$$
so considering $|k| \lesssim R^{\frac{2}{1-\sigma}}$ is sufficient, of which there are fewer than $R^C$ for some large $C$ depending on $\sigma$. 

\noindent(b) For large $k$ and $m \neq 0$, $$m \cdot \mathrm{min}(|k|, |k+m|)^{\sigma - 1}\lesssim |f_{(k)}^-(m)|,$$
so when $|k| \gtrsim \left( \frac{1}{m} R\right)^{\frac{1}{\sigma - 1}}$ or $|k| \gtrsim R^{1 + \frac{1}{\sigma - 1}}$, the only element in $\mathrm{Dom}(f_{(k)})$ is $m = 0$. So, we only need to consider at most $R^{2 + \frac{1}{\sigma - 1}}$ values of $k$. 

\end{proof}

\begin{lemma}{(Gaussian Hypercontractivity)} \label{lem-hypercontractivity}
Let $\{g_k\}$ i.i.d. Gaussians or random phase. Given $\zeta_j \in \{\pm\}$ and random variable $X$ of the form 
\begin{equation}
X = \sum_{k_1, \ldots, k_n} a_{k_1, \ldots, k_n} \prod_{j = 1}^n g_{k_j}^{\zeta_j}(\omega),
\end{equation}
where $a_{k_1, \ldots, k_n}$ are constants. Then, for $q \geq 2$,
\begin{equation}
\E \left|X\right|^q \leq (q-1)^{\frac{nq}{2}}\cdot \left( \E |X|^2\right)^{q/2}
\end{equation}
\end{lemma}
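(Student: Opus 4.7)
The plan is to rewrite the inequality as a comparability-of-moments estimate $\|X\|_{L^q} \leq (q-1)^{n/2}\|X\|_{L^2}$, which is equivalent to the claim after raising to the $q$-th power and noting $\E|X|^2 = \|X\|_{L^2}^2$. This is the standard hypercontractivity statement for a homogeneous chaos of degree $n$, which I would prove by a classical Nelson / Ornstein--Uhlenbeck semigroup argument, and I would treat the Gaussian case and the random phase case in parallel.

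For the complex Gaussian case, first write each $g_k$ in terms of its real and imaginary parts, so the underlying probability space is a countable product of standard real Gaussians. The random variable $X$ lies in the span of products $\prod_j g_{k_j}^{\zeta_j}$, hence in the orthogonal sum of Wiener chaoses of order at most $n$; by splitting $X$ into its projections onto each homogeneous chaos of order $m \leq n$ and using the triangle inequality, it suffices to prove the bound for $X$ in the $m$-th chaos with constant $(q-1)^{m/2}$. For such $X$, Nelson's hypercontractivity gives $\|T_\rho X\|_{L^q} \leq \|X\|_{L^2}$ for $\rho = 1/\sqrt{q-1}$, and since the $m$-th chaos is an eigenspace of $T_\rho$ with eigenvalue $\rho^m$, we obtain $\|X\|_{L^q} \leq (q-1)^{m/2}\|X\|_{L^2} \leq (q-1)^{n/2}\|X\|_{L^2}$.

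For the random phase case, the underlying space is the product torus $\prod_k \mathbb{T}$ with Haar measure, and the natural orthonormal basis consists of characters $\prod_k e^{i m_k \theta_k}$ indexed by finitely supported $(m_k) \in \Z^{\Z_L}$, graded by total degree $|m| = \sum_k |m_k|$. The variable $X$ is supported on characters of total degree exactly $n$ (after collecting $g_k^+ = e^{i\theta_k}$ and $g_k^- = e^{-i\theta_k}$ factors). The analogue of $T_\rho$ multiplies the degree-$|m|$ character by $\rho^{|m|}$, and the two-point hypercontractive inequality on a single $\mathbb{T}$ (provable via Gross's log-Sobolev inequality on the circle, or directly by a one-variable computation as in the Bonami--Beckner approach) tensorizes to give $\|T_\rho f\|_{L^q} \leq \|f\|_{L^2}$ for $\rho = 1/\sqrt{q-1}$ on the whole product. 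The same eigenvalue argument as in the Gaussian case yields $\|X\|_{L^q} \leq (q-1)^{n/2}\|X\|_{L^2}$.

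The main technical point, and the one I would spend the most care on, is verifying that the same sharp constant $(q-1)^{n/2}$ applies in both settings with a uniform tensorization. For the Gaussian case this is textbook, but for the random phase case I would have to either (i) establish the two-point log-Sobolev inequality on $\mathbb{T}$ explicitly and appeal to Gross's implication that log-Sobolev implies hypercontractivity, or (ii) bypass semigroups entirely by computing $\E|X|^{2m}$ for even integer $q=2m$ as a sum over multi-pairings of character indices and bounding the combinatorial count of admissible pairings by $((2m-1)n)^{mn}$ or similar, then interpolating to general $q \geq 2$. Route (i) is cleaner and yields the stated constant directly; route (ii) is self-contained but requires a more delicate counting argument. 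Either route reduces the lemma to well-known facts, so I expect no genuine new obstacle beyond selecting the cleanest presentation.
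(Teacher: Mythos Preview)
Your proposal is correct and is in fact far more detailed than what the paper does: the paper's entire proof is a one-line citation (``See, for instance, Lemma 2.6 of \cite{Hypercontractivity}''), deferring the result to the literature as a known fact. Your sketch via Nelson's hypercontractivity and the Ornstein--Uhlenbeck/Poisson semigroup is the standard route to that cited result.

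Two small remarks. First, in the random-phase case your claim that $X$ is supported on characters of total degree \emph{exactly} $n$ is not quite right: if some indices $k_j$ coincide with opposite signs $\zeta_j$, factors $g_k \overline{g_k} = 1$ collapse and the degree drops. This is harmless since you only need degree $\leq n$ and $(q-1)^{m/2}$ is monotone in $m$, but the statement should be adjusted. Second, the phrase ``two-point hypercontractive inequality on a single $\mathbb{T}$'' is slightly off --- ``two-point'' usually refers to $\{\pm 1\}$; the relevant single-coordinate input on the circle is Weissler's hypercontractivity for the Poisson semigroup on $\mathbb{T}$, which does give the same critical $\rho = (q-1)^{-1/2}$ and tensorizes as you describe. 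With these cosmetic fixes, route (i) goes through cleanly.
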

\begin{proof}
See, for instance, Lemma 2.6 of \cite{Hypercontractivity}. 
\end{proof}

\begin{proof}[Proof of Proposition \ref{prop-remainder}]
Note that 
\begin{align*}
\|\mathscr L^n(a)\|_Z^2 &= \sup_{0 \leq t \leq 1} L^{-1} \sum_{k \in \Z_L} \langle k \rangle^{10} |(\mathscr L^n a)_k(t)|^2 \\
&= \sup_{0 \leq t \leq 1} L^{-1} \sum_{k \in Z_L} \langle k \rangle^{10} \left| \sum_{\zeta \in \{\pm\}} \sum_{\ell} \int\limits_{0 \leq s \leq t} \sum_{n \leq m \leq N^3} (\mathscr L^n)_{k, \ell}^{m, \zeta}a_{\ell}^\zeta (s) \mathrm{d}s  \right|^2 \\
&\lesssim \sup_{0 \leq s \leq t \leq 1} \sup_{\zeta} \sup_m L^{-1}N^3 \sum_{k \in \Z_L} \left| \langle k \rangle^{5}  \sum_{\ell \in \Z_L} \langle \ell \rangle^{-5} (\mathscr L^n)_{k, \ell}^{m, \zeta}(t,s) \langle \ell \rangle ^5 a_\ell^\zeta(s) \right|^2 \\
& \lesssim \|a\|_Z^2 N^3\sup_{0 \leq s \leq t \leq 1} \sup_\zeta \sup_m  \\ 
& \hspace{1.5cm} \left( \sup_\ell \sum_{k \in \Z_L} \langle k\rangle^5 \langle \ell \rangle^{-5} |(\mathscr L^n)_{k, \ell}^{m, \zeta}(t,s)|\right) \left( \sup_k \sum_{\ell \in \Z_L} \langle k\rangle^5 \langle \ell \rangle^{-5} |(\mathscr L^n)_{k, \ell}^{m, \zeta}(t,s)|\right) \\
& \lesssim \|a\|_Z^2 N^3\sup_{0 \leq s \leq t \leq 1} \sup_\zeta \sup_m  \sup_{k, \ell} \left( \langle k - \zeta \ell \rangle^{8} |(\mathscr L^n)_{k, \ell}^{m, \zeta}(t,s)|\right)^2 \\
& {\hspace{1.5cm}}\left( \sup_\ell\sum_k \frac{\langle k \rangle^5}{\langle \ell \rangle^5} \langle k - \zeta \ell \rangle^{-8}\right) \left( \sup_k\sum_\ell \frac{\langle k \rangle^5}{\langle \ell \rangle^5} \langle k - \zeta \ell \rangle^{-8}\right).
\end{align*} 
So, for fixed $m$ and $\zeta$, it remains to show that with probability $\geq 1 - L^{-A}$, 
\begin{equation} \label{remains}
\sup_{k, \ell} \sup_{0 \leq s \leq t \leq 1}  \langle k - \zeta \ell \rangle^{8} |(\mathscr L^n)_{k, \ell}^{m, \zeta}(t,s)| \lesssim (L^\theta \alpha T^{4/5})^{m/2} L^{45}.
\end{equation}
Note that if we were to fix $k,\ell,s,t$ the bound would hold with probability $ \geq 1 - L^{-A}$ due to hypercontractivity. However, as we consider infinitely many $k, \ell$, the collection of exceptional sets may have measure larger than $L^{-A}$. However, we show that this is not the case. In applying Lemma \ref{lem-hypercontractivity}, what varies for each choice of $k, \ell$ are the constants $a_{k_1, \ldots k_n}$. Note that the only dependence of $a_{k_1, \ldots, k_n}$ on $k,\ell$ are through the quantities $k - \zeta\ell$, and $\mu_j := |k|^\sigma - \zeta_{\mathfrak n_{j+1}} |k_{j}|^\sigma$, where $k_j = k_{\mathfrak n_{j + 1}}$ for $\mathfrak n_j$ the $j$th branching node in the stem from top to bottom and $\mathfrak n_{n+1} = \mathfrak f$. So, we will show that we need to consider only finitely many values of these variables and take a supremum in them rather than in $k$ and $\ell$.

Regarding $k - \zeta \ell$, we may again restrict each non-flower leaf $\mathfrak \ell$ to have $|k_{\mathfrak \ell}| \lesssim L^{\theta}$, yielding at most $C^{N^3}L^2$ choices for $k - \zeta \ell$. The analysis of $\mu_j$ is more complex. Consider the integral part of (\ref{eq-Jtilde}): 
\begin{align*}
(\widetilde{\mathcal J}_{\mathcal T})_{\mathrm{int}}(t,s,k,\ell) &= \int_\mathcal{D} \prod_{\mathfrak n \in \mathcal N}e^{\zeta_{\mathfrak n} 2 \pi i Tt_{\mathfrak n}\Omega_{\mathfrak n}} \diff t_{\mathfrak n}\boldsymbol{\delta}(t_{\mathfrak f^p} - s) \\
&=  \left( e^{\zeta_{\mathfrak f^p} 2\pi i Ts\Omega_{\mathfrak f^p}} \right) \int_\mathcal{D} \prod_{\mathfrak f^p \neq \mathfrak n \in \mathcal N}e^{\zeta_{\mathfrak n} 2 \pi i Tt_{\mathfrak n}\Omega_{\mathfrak n}} \diff t_{\mathfrak n}. \nonumber
\end{align*}
The dependence on $\mu_n$ is only evident in $\Omega_{\mathfrak f^p}$, so does not effect $\left|(\widetilde{\mathcal J}_{\mathcal T})_{\mathrm{int}}\right|$. Otherwise, note that for $1 \leq j \leq n - 1$ (with the convention that $\mu_0 = 0$),
\begin{align*}
\Omega_{\mathfrak n_j} &= \Omega_{\mathfrak n_j}' + \mu_{j - 1} - \mu_j,
\end{align*}
where $\Omega_{\mathfrak n_j}'$ depends on the non-flower leaves so satisfies $|\Omega_{\mathfrak n_j}'| \lesssim C^{N^3}L^\theta$.

Now we will consider a diadic decomposition (starting at $L^{100}$) of $\Omega_{\mathfrak n_j}$ for $1 \leq j \leq n - 2$, letting the largest of these be denoted by $R$. Using Chebyshev, for each $R$, it suffices to show that for $p \geq 2$ and $p \sim C$ (and correspondingly $A \sim p)$,
\begin{equation}\label{eq-LR-bound}
\E \left|\sup_{k, \ell} \sup_{0 \leq s \leq t \leq 1}  \langle k - \zeta \ell \rangle^{8} |(\mathscr L^n)_{R, k, \ell}^{m, \zeta}(t,s)|\right|^p \lesssim p^{np}(L^\theta \alpha T^{4/5})^{mp/2} L^{40p}R^{-p/20},
\end{equation}
where $(\mathscr L^n)_{R, k, \ell}^{m, \zeta}$ denotes the corresponding $R$-contribution to $(\mathscr L^n)_{k, \ell}^{m, \zeta}$ and if $R = L^{100}$, the above holds without the factor $R^{-p/20}$. 

To establish (\ref{eq-LR-bound}), we reduce to considering finitely many values of $\mu_j$  $(j = 1, \ldots, n-1)$ for each $R$. We can assume $|k| \geq L^2$ as otherwise there are at most $L^8$ pairs $(k, \ell)$. Then, we make the following observations: 

\begin{enumerate}
\item If all $|\Omega_j| \leq R$ for $1 \leq j \leq n - 2$, then for the same values of $j$, $|\mu_j| \lesssim R$. Therefore, using Lemma \ref{lem-finite}, there are at most $R^C$ values of $k'\in \Z_L$ such that $f_{(k')}$ approximates $\mu_j$ for all $1 \leq j \leq n-1$. We may include $j = n-1$ as either Lemma \ref{lem-finite} imposes a bound on $k$ or we may approximate $\mu_{n-1}$ up to $R^{-1}$ using $f_{(k')}$. This is sufficient as when we apply (\ref{eq-osc-int}), it is not necessary to know $q_{\mathfrak n}$ precisely, but within $O(1)$. 
\item No $\mathfrak n_j$ is spliced as node $\mathfrak n_j$ on the stem satisfies $|k_{\mathfrak n_j}| \geq L^2/2$ while those off the stem satisfy $|k_{\mathfrak n}| \leq L$, implying that any double bond is LG. Therefore, it must be the case that  $|q_{\mathfrak n_j}| \gtrsim R$ for some $1 \leq j \leq n - 1$ if $R > L^{100}$. To see this, if $|\Omega_{\mathfrak n_{j'}}| \sim R$ for some $1 \leq j' \leq n - 2$, then either $|q_{\mathfrak n_{j'+1}}| \gtrsim R$ or $|q_{\mathfrak n_{j'}}| \gtrsim R$. This allows us to improve the bound (\ref{eq-Lnm}) for an additional gain of $L^4R^{-1}$, as for fixed $k'$, there are $\lesssim L^4$ options for $q_{\mathfrak n_j}$. 
\end{enumerate}

So, we may consider $(\widetilde{\mathscr L^n})_{R, k - \zeta \ell, k'}^{m, \zeta}$, where we have removed the unimodular factor $e^{\zeta_{\mathfrak f^p} 2\pi i Ts\Omega_{\mathfrak f^p}}$ and now rather than depending on $k, \ell$, it depends on the $O(L^2)$ choices for $k - \zeta \ell$ and the $R^C$ choices of $k'$. Now, it suffices to show (\ref{eq-LR-bound}) but for $(\widetilde{\mathscr L^n})_{R, k - \zeta \ell, k'}^{m, \zeta}$, replacing the supremum in $k, \ell$ with one in $k - \zeta \ell, k'$, of which there are $L^CR^C$ choices. Note that if $R > L^{100}$, $(\widetilde{\mathscr L^n})_{R, k - \zeta \ell, k'}^{m, \zeta}$ satisfies (\ref{eq-Lnm}), with an additional gain of $L^4R^{-1}$. Similarly if we take derivatives in $(t,s)$, $\partial_{(t,s)}(\widetilde{\mathscr L^n})_{R, k - \zeta \ell, k'}^{m, \zeta}$ satisfies (\ref{eq-Lnm}) with an additional gain of $L^{12}R^{-1}$.

Therefore, applying Lemma \ref{lem-hypercontractivity}, then integrating in $(t,s)$ and summing in $k - \zeta \ell$ and $k'$, we obtain the following, for $F(t,s) = \langle k - \zeta \ell \rangle^8  (\widetilde{\mathscr L^n})_{R, k - \zeta \ell, k'}^{m, \zeta}(t,s)$,
\begin{align}   
\mathbb E\|F\|_{L_{t,s}^p \ell^p_{k - \zeta \ell, k'}}^p, \mathbb E\| \partial_{(t,s)}F\|_{L_{t,s}^p \ell^p_{k - \zeta \ell, k'}}^p &\lesssim p^{mp}(L^\theta \alpha T^{4/5})^{mp/2} L^{35p + C}R^{C - p/2}, \label{eq-F}.
\end{align}
By Gagliardo-Nirenberg, we may conclude 
\begin{align}
\mathbb E\| \langle k - \zeta \ell \rangle^8  (\widetilde{\mathscr L^n})_{R, k - \zeta \ell, k'}^{m, \zeta}(t,s)\|_{L_{t,s}^\infty \ell^\infty_{k - \zeta \ell, k'}}^p&\lesssim p^{mp}(L^\theta \alpha T^{4/5})^{mp/2} L^{35p + C}R^{C - p/2}.
\end{align}
Choosing $p \geq 40C$, we have proven (\ref{eq-LR-bound}). If $R = L^{100}$, (\ref{eq-F}) no longer has the factor $R^{-p/2}$ and may have an additional factor of $L^C$, which is sufficient.  
\end{proof}

\section{Proof of main Theorem} \label{sec-thm}
We begin with a final proposition bounding the remainder $b$ and then prove Theorems \ref{mainthm} and \ref{mmtthm}.

\begin{proposition} \label{prop-b}
With probability $\geq 1 - L^{-A}$, the mapping defined by the right hand side of (\ref{eq-binv}) is a contraction mapping from the set $\{b: \|b\|_Z \leq L^{-500}\}$ to itself. 
\end{proposition}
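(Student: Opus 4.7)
\medskip

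\noindent\textit{Proof proposal.} The plan is to apply the Banach fixed-point theorem to the map
\[ \Phi(b) := (1-\Ell)^{-1}\bigl(\mathscr R + \mathscr Q(b,b) + \mathscr C(b,b,b)\bigr)\]
on the ball $B = \{b : \|b\|_Z \leq L^{-500}\}$, working on the event of probability $\geq 1 - L^{-A}$ on which Proposition \ref{prop-remainder} holds. First I would verify that $(1-\Ell)^{-1}$ is well-defined and bounded on $Z$. Since $N = 10^6/\epsilon$ and $L^\theta \alpha T^{4/5} \leq L^{-c\epsilon}$ for some $c > 0$ (by the choice of $T$ in Theorems \ref{mainthm}, \ref{mmtthm} and $\theta$ sufficiently small), Proposition \ref{prop-remainder} gives $\|\Ell^N\|_{Z \to Z} \lesssim L^{-c\epsilon N/4} L^{50} \ll L^{-100}$. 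Writing $(1-\Ell)^{-1} = \sum_{n=0}^{N-1} \Ell^n + \Ell^N (1-\Ell)^{-1}$ and iterating, the Neumann series converges and yields
\[ \|(1-\Ell)^{-1}\|_{Z \to Z} \leq \sum_{n=0}^{N-1} \|\Ell^n\|_{Z \to Z} + O(1) \lesssim L^{60}.\]

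Next I would bound $\|\mathscr R\|_Z$ on the event above. By the definition (\ref{eq-r}), $\E|\mathscr R_k(t)|^2$ decomposes as a sum of $\K_\Q(t,t,k)$ over couples of order $n \geq N$, and by Proposition \ref{prop-couples} (applied to each admissible sub-sum consistent with the splicing/congruence structure),
\[ \E|\mathscr R_k(t)|^2 \lesssim \langle k \rangle^{-20} T^{-3/5} \sum_{n \geq N} (L^\theta \alpha T^{4/5})^n \lesssim \langle k \rangle^{-20} L^{-c\epsilon N}.\]
Promoting this $L^2$ bound to a pointwise-in-$k,t$ bound on $Z$ proceeds exactly as in the proof of Proposition \ref{prop-remainder}: apply Gaussian hypercontractivity (Lemma \ref{lem-hypercontractivity}) together with a dyadic decomposition in the resonance size $R$, reducing the supremum over $(k,t)$ to finitely many values via Lemma \ref{lem-finite}, and using Chebyshev with a large power $p$. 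This yields $\|\mathscr R\|_Z \lesssim L^{-c\epsilon N/2} \leq L^{-1000}$ with probability $\geq 1 - L^{-A}$.

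For the nonlinear terms, I would use a direct multilinear estimate on $\mathcal I \mathcal C$. Writing out (\ref{eq-C}) and using Young's inequality for the convolution on $\Z_L$ together with the $\langle k \rangle^{10}$ weight in $Z$, one obtains
\[ \|\mathcal I \mathcal C(a_1, a_2, a_3)\|_Z \lesssim \alpha T \cdot L^{C} \prod_{j=1}^3 \|a_j\|_Z,\]
(where $C$ is an absolute constant coming from Schur-type bounds on the weights). Applied to $\mathscr Q$ and $\mathscr C$ and using the deterministic bound $\|\J_n\|_Z \lesssim L^{100}$ on the event in question (which itself follows from Proposition \ref{prop-couples} via hypercontractivity, as $\J_n$ is a polynomial of fixed degree in $\{g_k\}$), one gets
\[ \|\mathscr Q(b,b)\|_Z \lesssim L^{C'} \|b\|_Z^2, \qquad \|\mathscr C(b,b,b)\|_Z \lesssim L^{C'} \|b\|_Z^3.\]
For $b \in B$, these are bounded by $L^{C' - 1000}$ and $L^{C' - 1500}$ respectively, which are $\leq L^{-600}$ for any fixed $C'$. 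Combining with $\|(1-\Ell)^{-1}\|_{Z \to Z} \lesssim L^{60}$ gives $\|\Phi(b)\|_Z \leq L^{-500}$, i.e., $\Phi(B) \subset B$. The contraction property follows from the same multilinear estimates applied to the differences, since $\|\mathscr Q(b,b) - \mathscr Q(b',b')\|_Z + \|\mathscr C(b,b,b) - \mathscr C(b',b',b')\|_Z \lesssim L^{C'}(\|b\|_Z + \|b'\|_Z) \|b - b'\|_Z \leq L^{-400}\|b-b'\|_Z$, so $\Phi$ has contraction constant $\ll 1$.

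The main technical subtlety is the second paragraph: making sure the sum over couples of order $\geq N$ for $\mathscr R$ can be controlled uniformly in $(t,k)$ via hypercontractivity. The argument is essentially identical to that in the proof of Proposition \ref{prop-remainder}, but without the flower-tree structure, which actually simplifies matters.
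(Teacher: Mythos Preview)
Your proposal is correct and follows essentially the same approach as the paper: invert $(1-\Ell)$ via the identity $(1-\Ell)^{-1} = (1-\Ell^N)^{-1}(1+\Ell+\cdots+\Ell^{N-1})$ using Proposition~\ref{prop-remainder}, bound $\mathscr R$ and $\J_n$ in $Z$ by combining Proposition~\ref{prop-couples} with hypercontractivity exactly as in the proof of Proposition~\ref{prop-remainder}, and use the crude multilinear bound $\|\mathcal I\mathcal C(u,v,w)\|_Z \lesssim L^{20}\|u\|_Z\|v\|_Z\|w\|_Z$ for $\mathscr Q$ and $\mathscr C$. Two minor remarks: the sum over couple orders for $\mathscr R$ is actually finite (over $N \leq n \leq 3N-2$ since each $n_j < N$), and the $R$-dyadic decomposition via Lemma~\ref{lem-finite} is unnecessary here since, as you note, without the flower/stem there are no large decorations to worry about---the $n_{\mathrm{in}}$ factors provide uniform decay in $k$ directly.
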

\begin{proof}
Throughout, we exclude the exceptional set of probability $\leq L^{-A}$ from Proposition \ref{prop-remainder}. Additionally, note that on another set of probability $\geq 1 - L^{-A}$, 
\begin{align}
\|\mathscr R(t)\|_{Z}&\lesssim (L^\theta \alpha T^{4/5})^N L^5 \label{eq-Rest} \\ 
\|(\mathcal J_n)(t)\|_Z &\lesssim(L^\theta \alpha T^{4/5})^{n/2} L^5, \label{eq-Jest}
\end{align}
due to Proposition \ref{prop-couples} and using the techniques in the proof of Proposition \ref{prop-remainder} to remove the expectation. Also both $\mathscr R$ and $\mathcal J_n$ are represented by trees preserved under congruence. From now on, exclude these exceptional sets and suppose $\|b\|_Z \leq L^{-500}$. Note that 
\begin{equation}
\|\mathscr R\|_Z + \|\mathscr Q(b,b)\|_Z + \|\mathscr C(b,b,b)\|_Z \lesssim L^{-600}. 
\end{equation}
To see this, for $\mathscr Q$ and $\mathscr C$, we use that
\begin{align*}
\| \mathcal I \mathcal C(u,v,w)\|_Z &\lesssim L^{20}\|u\|_Z\|v\|_Z \|w\|_Z,
\end{align*}
combined with the fact that at least two of $u,v,w$ are $b$ and the other satisfies (\ref{eq-Jest}), while the bound for $\mathscr R$ follows from \ref{eq-Rest} and $N = \frac{10^6}{\epsilon}$. Finally, we note that 
\begin{equation}
(1 - \mathscr L)^{-1} = (1 - \mathscr L^N)^{-1}( 1 + \mathscr L + \ldots \mathscr L^{N-1}). 
\end{equation}
maps $Z$ to $Z$, where $1 - \mathscr L^N$ can be inverted via Neumann series since we have chosen $N$ large enough for $\|\mathscr L^N\|_{Z \to Z} < 1$. Therefore,
\begin{equation}
\|(1 - \mathscr L)^{-1}\|_{Z \to Z} \lesssim L^{30}.
\end{equation}
\end{proof}

\begin{proof}[Proof of Theorem \ref{mainthm}]
Note that almost surely, the initial data lies in $L^2$ and in this case, the solution is globally well-posed and satisfies $|\widehat{u}(k,t)| \lesssim L$ by conservation of mass. Denote the complement of all exceptional sets coming from Proposition \ref{prop-remainder} and \ref{prop-b} by $E$, so that $\mathbb P(E) \geq 1 - L^{-A}$. 
Note also that $\E |\widehat{u}(k,t)|^2 = \E|a_k(s)|^2$ where $s = \frac{t}{T}$. It is enough to consider $\E\left( |a_k(s)|^2 \boldsymbol{1}_{E}\right)$ as $\E\left( |a_k(s)|^2 \boldsymbol{1}_{E^c}\right) \lesssim L^{-A + 10}$. Therefore, 
\begin{align*}
\E\left(|a_k(s)|^2\boldsymbol{1}_{E}\right) = \sum_{0 \leq n_1, n_2 \leq N} &\E \left( (\mathcal J_{n_1})_k(s)  \overline{\mathcal (J_{n_2})_k(s)} \boldsymbol{1}_{E}\right) \\
&+ 2 \mathrm{Re}\sum_{0 \leq n \leq N} \E \left( (\mathcal J_n)_k(s) \overline{b_k(s)}\boldsymbol{1}_{E}\right) + \E(|b_k(s)|^2 \boldsymbol{1}_{E}).
\end{align*}
The terms involving $b$ are easily bounded by $L^{-100}$ using (\ref{eq-Jest}) as well as Proposition \ref{prop-b}. So, we turn our attention to the first term. Note that 
\begin{equation}
\left| \E \left( (\mathcal J_{n_1})_k(s)  \overline{\mathcal (J_{n_2})_k(s)} \boldsymbol{1}_{E^c}\right) \right|\leq \left(\E |(\mathcal J_{n_1})_k(s)|^4\right)^{1/4} \left(\E |(\mathcal J_{n_2})_k(s)|^4\right)^{1/4} \left( \mathbb P(E^c)^{1/2}\right) \lesssim L^{-A/2 + 10}
\end{equation}
by (\ref{eq-Jest}) and Lemma \ref{lem-hypercontractivity}, so we may replace $\boldsymbol{1}_E$ by $1$ when we consider the correlation between $\mathcal J_{n_1}$ and $\mathcal J_{n_2}$. This becomes precisely 
\begin{equation}
\E \left( (\mathcal J_{n_1})_k(s)  \overline{\mathcal (J_{n_2})_k(s)}\right) = \sum_\mathcal {Q}\mathcal{K_Q}(t,t, k),  
\end{equation}
for $\mathcal Q = \left( \mathcal T^+, \mathcal T^-\right)$, where $n(\mathcal T^\pm) \leq 2N$. Note that as congruence preserves the order of each tree in a couple, Proposition \ref{prop-couples} bounds this by $o_{\ell_k^\infty}\left(\frac{t}{T_{\mathrm{kin}}}\right)$ when $n_1 + n_2 \geq 3$. When $n_1 = n_2 = 0$, we are left with $n_{\mathrm{in}}(k)$, and when $n_1 + n_2 = 1$, 
\begin{equation*}
2\mathrm{Re} \left(\E \left( (\mathcal J_0)_k(s) \overline{(\mathcal J_1)_k(s)}\right)\right) = 0,
\end{equation*}
as the correlation can easily be calculated to be purely imaginary.

It remains to show that for $n_1 + n_2 = 2$, the correlation is $o_{\ell_k^\infty}\left(\frac{t}{T_{\mathrm{kin}}}\right)$. We must consider $K_2 := \E |(\mathcal J_1)_k(s)|^2 + 2\mathrm{Re}\left(\E \left( (\mathcal J_0)_k(s) \overline{(J_2)_k(s)} \right)\right)$. As in the proof of Theorem 1.3 of \cite{2019}, we have 
\begin{align}\label{eq-K2}
K_2 &= \frac{\alpha^2 s^2 T^2}{L^2} \sum_{\substack{k_1 - k_2 + k_3 \in \Z_L \\ \Omega(\vec k) \neq 0}} \phi_k \phi_{k_1} \phi_{k_2} \phi_{k_3} \left[\frac{1}{\phi_k} - \frac{1}{\phi_{k_1}} + \frac{1}{\phi_{k_2}} - \frac{1}{\phi_{k_3}} \right] \left| \frac{\sin(\pi Ts \Omega(\vec{k}))}{\pi Ts \Omega(\vec{k})}\right|^2 + O\left( \frac{Ts}{T_{\mathrm{kin}}}L^{-\delta}\right), 
\end{align}
where $\phi_{k_i} = n_{\mathrm{in}}(k_i)$. In order to justify this, following \cite{2019}, we need an improved bound for 3-vector counting when $\Omega = 0$ and $\{k_1, k_3\} \neq \{k, k_2\}$. In this case, there are no choices of $(k_1, k_2, k_3)$ that satisfy this. Using Corollary \ref{cor-iterates}, we have that 
\begin{equation*}
K_2 = \alpha^2 T^2 s^2 \sum_{\substack{(f_1, f_3) \in \Z^2 \\ |f_i| \lesssim TsL^{-1 + 3\delta}}} \int_{u_1-u_2+u_3 = k} W(\vec{u}) \left| \frac{\sin(\pi Ts \Omega(\vec{u}))}{\pi Ts \Omega(\vec{u})}\right|^2 e(-L(u_1 f_1 + u_3f_3)) \diff u + O(\frac{Ts}{T_{\mathrm{kin}}}L^{-\delta}),
\end{equation*}
where $W(\vec{u})$ represents all $\phi$-terms and $\vec{u} = (u_1, u_2, u_3, k)$. Now, we use the fact that for smooth functions $f$,
\begin{align}\label{eq-conv}
\left|t\int\left| \frac{\sin(\pi t x)}{\pi t x}\right|^2 f(x) \diff x - f(0)\right| \lesssim C(f) t^{-\frac{1}{2}},
\end{align}
where $C(f)$ depends on the $\ell^\infty$ norms of $f$ and $f'$. Therefore, the same is true if we replace $f$ with $|f|$ (we may approximate $|f|$ by smooth functions $f_\delta = \left(f^2 + \delta^2\right)^\frac{1}{2}$). So, we may conclude that 
\begin{align*}
K_2 &\lesssim \alpha^2 T^2 s^2 \sum_{\substack{(f_1, f_3) \in \Z^2 \\ |f_i| \lesssim TsL^{-1 + 3\delta}}} \int_{u_1-u_2+u_3 = k} |W(\vec{u})| \left|\frac{\sin(\pi Ts \Omega(\vec{u}))}{\pi Ts \Omega(\vec{u})}\right|^2 \diff u + O \left(\frac{t}{T_{\mathrm{kin}}} L^{-\delta}\right) \\
&\lesssim \alpha^2 t \left[(TsL^{-1 + 3\delta})^2 (Ts)^{-\frac{1}{2}}\right] + O(\frac{t}{T_{\mathrm{kin}}}L^{-\delta}) \\
&\lesssim O(\frac{t}{T_\mathrm{kin}}L^{-\delta}), 
\end{align*}
by our choice of $T$. 
\end{proof}

\begin{proof}[Proof of Theorem \ref{mmtthm}] If $1 < \sigma < 2$, then the proof is identical to above other than the fact we are taking the expectation over $E_1 \supset E$, where $E_1$ is the event that (\ref{DISP}) has a smooth solution on $[0,T]$. However, if $0 < \sigma < 1$, we will have to additionally consider the correlations between $\mathcal{J}_{n_1}$ and $\mathcal J_{n_2}$, where $n_1 + n_2 = 2$. In this case, we consider $K_2$ and may rewrite it as (\ref{eq-K2}). This requires an improved 3-vector counting of $L^2T^{-1}L^{-\delta}$ when we impose $\Omega = 0$, which we may achieve bounding $\Omega$ to within $L^{-\frac{1}{2 - \sigma}}$ instead of $T^{-1}$ in Lemma \ref{lem-3vc}. Then, we may apply Corollary \ref{cor-iterates}, using that $T^{2 - \sigma}L^{-1 + 3\delta} \ll 1$, to see that 
\begin{align*}
K_2 &= \alpha^2 T^2 s^2\int_{\xi_i \in \R} \phi_\xi\phi_{\xi_1} \phi_{\xi_2} \phi_{\xi_3} \left[\frac{1}{\phi_\xi} - \frac{1}{\phi_{\xi_1}} + \frac{1}{\phi_{\xi_2}} - \frac{1}{\phi_{\xi_3}} \right] \left| \frac{\sin(\pi Ts \Omega(\vec{\xi}))}{\pi Ts \Omega(\vec{\xi})}\right|^2 + O\left( \frac{t}{T_{\mathrm{kin}}}L^{-\delta}\right).  
\end{align*}
Applying (\ref{eq-conv}), we obtain the first iterate of (\ref{WKE}), and may conclude the proof.  
\end{proof}

\section*{Declarations}
\noindent \textit{Acknowledgements} The author would like to thank her advisor Zaher Hani for all of his invaluable help and feedback. 

\vspace{.1cm}

\noindent \textit{Funding.} This research was supported by the Simons Collaboration Grant on Wave Turbulence and NSF grant DMS-1936640. 

\vspace{.1cm}

\noindent \textit{Data Availability Statement.} This manuscript has no associated data.

\section*{References}
\printbibliography[heading = none]

\end{document}